\documentclass[a4paper, 11pt]{amsart}
\usepackage{graphicx}% Required for inserting images
\graphicspath{ {disegni/} }

\usepackage{mleftright}
\usepackage{amsmath}
\usepackage{amsthm}
\usepackage{mathtools}
\usepackage{amssymb}
\usepackage{enumitem}
\usepackage{nicematrix}
\usepackage{tikz-cd}
\usepackage{import}
\usepackage{float}
\usepackage{caption}
\usepackage{subcaption}
\usepackage{pst-node}
\usepackage{tikz}
\usepackage{mdframed}
\usepackage{bm}
\usepackage[british]{babel}
\usepackage[useregional]{datetime2}
\DTMlangsetup[en-GB]{showdayofmonth=false}

\usepackage[colorlinks=true, allcolors=blue]{hyperref}
\usepackage[capitalise]{cleveref}
\usepackage[top=3cm,bottom=2cm,left=3cm,right=3cm,marginparwidth=1.75cm]{geometry}
\parskip4pt plus2pt minus2pt

\newtheorem{thm}{Theorem}
\renewcommand*{\thethm}{\Alph{thm}}

\newtheorem{theorem}{Theorem}[section]
\newtheorem{prop}[theorem]{Proposition}
\newtheorem{lemma}[theorem]{Lemma}
\newtheorem{corollary}[theorem]{Corollary}

\theoremstyle{definition}
\newtheorem{definition}[theorem]{Definition}
\theoremstyle{remark}
\newtheorem{remark}[theorem]{Remark}
\newcommand{\mtmathitem}{%
\xpatchcmd{\item}{\@inmatherr\item}{\relax\ifmmode$\fi}{}{\errmessage{Patching of \noexpand\item failed}}
\xapptocmd{\@item}{$}{}{\errmessage{appending to \noexpand\@item failed}}}

\newcommand{\R}{\mathbb{R}}
\newcommand{\D}{\mathbb{D}}
\renewcommand{\S}{\mathcal{S}}
\newcommand{\F}{\mathcal{F}}
\DeclareMathOperator{\dist}{dist}
\DeclareMathOperator{\id}{Id}
\DeclareMathOperator{\im}{Im}
\DeclareMathOperator{\diam}{diam}
\DeclareMathOperator{\codim}{codim}
\DeclareMathOperator{\rank}{rk}
\DeclareMathOperator{\sign}{sign}
\DeclareMathOperator{\crit}{crit}

\DeclarePairedDelimiter{\abs}{\lvert}{\rvert}
\makeatletter
\def\mathcenterto#1#2{\mathclap{\phantom{#1}\mathclap{#2}}\phantom{#1}}
\let\old@widetilde\widetilde
\def\widetildeto#1#2{\mathcenterto{#2}{\old@widetilde{\mathcenterto{#1}{#2\,}}}}
\let\old@widehat\widehat
\def\widehatto#1#2{\mathcenterto{#2}{\old@widehat{\mathcenterto{#1}{#2\,}}}}
\makeatother
\title{Thom--Milnor bounds for smooth manifolds}
\author{Saugata Basu, Antonio Lerario, Matteo Testa}

\date{\today}

\begin{document}  

\begin{abstract}
    We prove a smooth analogue of the classical Thom--Milnor bound, showing that the Betti numbers of the zero set of a smooth map on a compact Riemannian manifold can be controlled by a condition number computed from its first jet. This extends previous results in the Euclidean setting by Lerario and Stecconi [J. Singul., 2021]. As a key step, we generalize the Thom--Milnor bound to polynomial maps on a nonsingular real algebraic variety, improving the dependence on the degree. Finally, inspired by the work of B\"{u}rgisser, Cucker and Tonelli-Cueto [Found. Comput. Math., 2020], we introduce a condition number for families of functions. Using this we extend existing bounds due to Basu, Pollack and Roy [Proc. Amer. Math. Soc., 2004],
for the Betti numbers of semialgebraic sets  described by closed conditions to what we call closed semialgebraic type sets, namely sets defined by closed inequalities involving smooth functions.
\end{abstract}
\maketitle

\section{Introduction}

\subsection{A smooth Thom--Milnor bound}The classical Oleinik--Petrovsky--Thom--Milnor bound \cite{milnorBettiNumbersReal1964} provides an estimate for the Betti numbers of a real algebraic set. In particular, denoting by $b(\cdot)$ the sum of the Betti numbers of a topological space, if $Z(p)\coloneqq\{p=0\}\subseteq \R^n$ with $p$ a polynomial of degree $d$, the bound asserts that
\begin{equation}
\label{eq:tm}b(Z(p))=O(d^n).
\end{equation}

No regularity is assumed in the defining equation in the algebraic case. When we leave the algebraic setting, however, to control the topology of the zero set of  a smooth function one needs some additional regularity conditions, since any closed set can be realized as the zero set of such a function.
If the function's domain is the $n$--dimensional disk and its zero set is regular, this problem was first studied by Y. Yomdin in \cite{Yomdinglobal}. He obtained a bound on the Betti numbers of the function's zero set in terms of the decay rate of its Taylor coefficients. For polynomials, whose Taylor coefficients eventually vanish, this bound recovers \eqref{eq:tm}. In a similar direction, the second named author and M. Stecconi, in \cite{lerarioWhatDegreeSmooth2021}, studied smooth systems on the $n$--dimensional disk that can be approximated by polynomial systems without affecting the topology of their solution set. In \cite{lerarioWhatDegreeSmooth2021} the authors introduced a notion of ``condition number'' for such systems, proving a bound on the sum of the Betti numbers of their set of solutions that is polynomial of degree $n$ in this quantity.

In this paper, we extend these results to the case where the ambient manifold is not Euclidean, but rather a compact Riemannian manifold $(M,g)$ of dimension $m$. We first introduce the set of functions that have at least a singular zero, called the \emph{discriminant} $\Delta$ see \cref{def:discriminant}:
\[\Delta:=\left\{f\in C^1(M, \R^k)\,\bigg|\, \exists x\in M\,:\, f(x)=0,\, \mathrm{rk}(D_xf)< k\right\}\subseteq C^1(M, \R^k).\]  Our result shows that the $C^1$--norm of $f$, computed using $g$ (see \cref{C^1 norm}), together with the $C^1$ distance from the discriminant controls the sum of the Betti numbers of $Z(f)$. \begin{thm}\label{theorem A}
    Let $(M,g)$ be a compact Riemannian manifold of dimension $m$. There exists a constant $c_1=c_1(M,g)>0$ such that for any \mbox{$f\in C^1(M,\R^k)$}, we have: 
    \[b(Z(f))\le c_1\cdot \left(\frac{\|f\|_{C^1}}{\dist_{C^1}(f,\Delta)}\right)^m.\]
\end{thm}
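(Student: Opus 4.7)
The plan is to mirror the Euclidean strategy of \cite{lerarioWhatDegreeSmooth2021} in a manifold setting: approximate $f$ in $C^1$-norm by a polynomial map of controlled degree $d$, use structural stability to identify the zero sets up to diffeomorphism, and then apply the improved algebraic Thom--Milnor bound (established earlier in the paper) for polynomial maps on nonsingular real algebraic varieties.

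First, I would use the Nash--Tognoli theorem to endow $M$ with an algebraic structure: fix a smooth embedding $M \hookrightarrow \R^N$ as a nonsingular real algebraic subvariety, once and for all. A polynomial map $M \to \R^k$ then means the restriction of a polynomial map $\R^N \to \R^k$, with a well-defined degree. All constants produced by this embedding depend only on $(M,g)$.

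Next, given $f\in C^1(M,\R^k)$, the plan is to produce polynomial maps $p_d$ of degree $\le d$ with the quantitative $C^1$ approximation rate
\[
\|f-p_d\|_{C^1} \le \frac{C(M,g)}{d}\,\|f\|_{C^1},
\]
using either a Jackson-type theorem on compact Riemannian manifolds or a kernel-based regularization compatible with the chosen algebraic embedding. Choosing the minimal $d$ for which the right-hand side is strictly smaller than $\dist_{C^1}(f,\Delta)$, one obtains $d \le c(M,g)\cdot \|f\|_{C^1}/\dist_{C^1}(f,\Delta)$. For this choice of $d$, the straight-line segment $\{(1-t)f+tp_d : t\in[0,1]\}$ lies entirely in the complement of $\Delta$, so a standard isotopy argument based on the implicit function theorem produces a diffeomorphism $Z(f)\cong Z(p_d)$. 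Finally, applying the improved algebraic Thom--Milnor bound for polynomials on the nonsingular variety $M$ yields $b(Z(p_d))=O(d^m)$, with implicit constant depending only on $(M,g)$, and combining with the bound on $d$ gives the theorem.

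The main obstacle is the quantitative $C^1$ polynomial approximation: one needs polynomial maps of degree $\lesssim 1/\varepsilon$ approximating $f$ up to $\varepsilon\|f\|_{C^1}$ in $C^1$-norm on $(M,g)$, with all constants depending only on the ambient geometry. This is a nontrivial extension of classical Jackson theory to compact Riemannian manifolds equipped with a fixed algebraic embedding, and the constant $c_1=c_1(M,g)$ in the theorem inherits its geometric dependence from this step. A secondary but important technical point is ensuring that the isotopy in the structural stability step actually produces a diffeomorphism (not just a homotopy equivalence) of zero sets, which requires that the homotopy stays out of $\Delta$ so that $Z((1-t)f+tp_d)$ is a smooth submanifold throughout; this is guaranteed precisely because the segment is contained in a ball of radius $\dist_{C^1}(f,\Delta)$ around $f$. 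A parallel approach via a finite good cover of $M$ by geodesic balls and Mayer--Vietoris would also be conceivable, but would neither exploit the new algebraic Thom--Milnor bound nor give as direct a reduction.
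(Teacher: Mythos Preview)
Your overall architecture matches the paper's, but the step you flag as ``the main obstacle'' is in fact a genuine gap: the estimate
\[
\|f-p_d\|_{C^1}\le \frac{C(M,g)}{d}\,\|f\|_{C^1}
\]
is not available for general $f\in C^1$. Jackson-type theorems give $C^1$ approximation at rate $1/d$ only under control of second-order data (a bound on $\|f\|_{C^2}$ or on the modulus of continuity of $Df$); with merely $\|f\|_{C^1}$ bounded the $C^1$ error $\inf_{\deg p\le d}\|f-p\|_{C^1}$ can decay arbitrarily slowly. So you cannot in general choose $d\lesssim \|f\|_{C^1}/\delta(f)$ and simultaneously keep the segment from $f$ to $p_d$ inside the complement of $\Delta$.

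The paper sidesteps this by dropping from $C^1$ to $C^0$ approximation: the Bagby--Bos--Levenberg theorem (stated in the paper as \Cref{bagbi}/\Cref{weierstrass appro to use}) does give
\[
\|f-p_d\|_{C^0}\le \frac{c_0}{d}\,\|f\|_{C^1},
\]
which is exactly the rate you need. The price is that $C^0$ closeness does not produce an isotopy of zero sets; instead the paper invokes a semicontinuity result (\Cref{semicontinuity}, based on \Cref{general semicontinuity}): if $\|f-p\|_{C^0}<\delta(f)/2$ then $b(Z(f))\le b(Z(p))$, even though $Z(f)$ and $Z(p)$ need not be diffeomorphic. This inequality is all that is required to feed into \Cref{theorem B}. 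Your isotopy/diffeomorphism step should therefore be replaced by this $C^0$-semicontinuity argument.
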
 
To simplify the notation we denote by $$\delta(f)\coloneqq \dist_{C^1}(f,\Delta)\quad \textrm{and} \quad k(f)\coloneqq\frac{\|f\|_{C^1}}{\dist_{C^1}(f,\Delta)}.$$ We call $k(f)$ the \emph{condition number} of $f$. The name comes from the fact that $\delta(f)$ can be effectively computed as the minimum of the fiberwise distance of the first jet of $f$ to the set of singular jets, see \cref{equality distance discriminant}.

We will show in \cref{propo:sharp} that the order $O(k(f)^m)$ is sharp. For what concerns the constant $c_1(M, g)$ from the statement, we will show in \cref{constant dependence} that it can be chosen to depend only on the diameter, sectional curvature and volume of the manifold.

\subsection{Real algebraic geometry}A key step in the proof of \Cref{theorem A} consists in a generalization of the Oleinik--Petrovsky--Thom--Milnor bound, which is of independent interest. In particular, assume that $M=Z(q_1,\dots,q_\ell)\subseteq \R^n$ is a compact, regular algebraic variety of dimension $m$, where each $q_i$ has degree at most $d_0$, and that $p\colon \R^n\to \R^k$ is a polynomial map of degree $d$. We show that the bound on the sum of Betti numbers of $b(Z(p)\cap M)$ can be improved from $O(d^n)$ to $O(d^m)$. More precisely:
\begin{thm} \label{theorem B}
    Let $M=Z(q_1,\dots, q_\ell)$ be a regular, irreducible compact manifold of dimension $m$, with $\deg q_i\le d_0$. For $d\ge 1$, consider $p_1,\dots, p_k$ polynomials with $\deg p_i\le d$, then:
    \[b(Z(p_1,\dots,p_k)\cap M)\le d_0^{n-m}((n-m)(d_0-1) + 2d-1)^{m}\leq O(d^m).\]
\end{thm}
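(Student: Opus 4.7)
The plan is to reduce the statement to counting complex critical points of an auxiliary polynomial function on $M$, and then apply Bezout's theorem.

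\textbf{Reductions.} Since $M$ is irreducible and nonsingular of dimension $m$, generic $\R$-linear combinations of $q_1,\ldots,q_\ell$ yield $n-m$ polynomials $\tilde q_1,\ldots,\tilde q_{n-m}$ of degree $\le d_0$ whose gradients are linearly independent along $M$, realising $M$ as a union of connected components of a set-theoretic complete intersection in $\R^n$. Collapsing the defining system, I would set $P \coloneqq p_1^2 + \cdots + p_k^2$, a polynomial of degree $2d$ with $Z(p_1,\ldots,p_k) \cap M = \{P=0\} \cap M$.

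\textbf{Morse-theoretic inequality.} The heart of the argument, which I expect to be the main technical obstacle, is the inequality
\[
b\bigl(\{P=0\}\cap M\bigr) \le \#\mathrm{crit}_\C(P|_M),
\]
where the right-hand side is the Bezout-style count of complex critical points of $P|_M$ with multiplicity. To establish it, I would follow the classical Milnor--Thom / Benedetti--Risler strategy: first perturb $p_i \rightsquigarrow p_i + \eta_i$ so that $Z_\eta$ becomes a smooth complete intersection submanifold of $M$ and $P_\eta|_M$ is Morse--Bott with $Z_\eta$ as a nondegenerate minimum locus; next use the Lojasiewicz inequality to deformation-retract $\{P_\eta \le \epsilon\} \cap M$ onto $Z_\eta$ for small $\epsilon > 0$; then further perturb $P_\eta$ inside a tubular neighbourhood of $Z_\eta$ to produce a genuine Morse function on $M$ realising at least $b(Z_\eta)$ critical points; and finally pass to the limit $\eta \to 0$ using upper semicontinuity of Betti numbers for zero sets and of Bezout counts for the critical-point system.

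\textbf{Bezout count.} A point $x \in M$ is critical for $P|_M$ exactly when $\nabla P(x) \in \mathrm{span}(\nabla \tilde q_1(x),\ldots,\nabla \tilde q_{n-m}(x))$, equivalently the $(n-m+1)\times n$ matrix $[\nabla P;\,\nabla \tilde q_1;\,\ldots;\,\nabla \tilde q_{n-m}]$ has rank $\le n-m$. Since the $\nabla \tilde q_i$ are linearly independent along $M$, this rank condition is a codimension-$m$ stratum in matrix space, locally cut out by $m$ of its $(n-m+1)\times(n-m+1)$ minors; each such minor has total degree $(2d-1)+(n-m)(d_0-1)$ because $\nabla P$ has degree $2d-1$ and each $\nabla \tilde q_i$ has degree $d_0-1$. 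Combining these with the $n-m$ equations $\tilde q_i=0$ of degree $d_0$ yields $n$ polynomial equations in $n$ variables, and Bezout's theorem gives
\[
\#\mathrm{crit}_\C(P|_M) \le d_0^{n-m}\bigl((n-m)(d_0-1)+2d-1\bigr)^m,
\]
from which the $O(d^m)$ estimate follows by absorbing $d_0,n,m$ into the implied constant.
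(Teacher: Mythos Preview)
Your overall strategy---reduce to a Morse-type count on $M$ and bound it by Bezout---matches the paper's, but two steps need repair.

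\textbf{The Morse reduction.} Your route through $p_i\rightsquigarrow p_i+\eta_i$, Lojasiewicz retraction, a further perturbation ``inside a tubular neighbourhood'', and a limit $\eta\to 0$ is both more complicated than necessary and not quite sound. A perturbation supported in a tubular neighbourhood is not polynomial, so you cannot later apply Bezout to \emph{its} critical-point system; and you write the bound as $\#\mathrm{crit}_{\mathbb C}(P|_M)$ for $P=\sum p_i^2$, whose critical locus need not be zero-dimensional. The paper avoids all of this: by semialgebraic triviality the inclusion $\{P\le 0\}\hookrightarrow\{P\le\epsilon\}$ is a homotopy equivalence for small regular $\epsilon$, and then one perturbs the polynomial $f=P-\epsilon$ in $C^1$ to a \emph{polynomial} Morse function $r$ of degree $\le 2d$ (a generic element of $P_{n,2d}$ suffices). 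The stability lemma gives $\{f\le 0\}\cap M\simeq\{r\le 0\}\cap M$, and Morse inequalities finish the reduction. No limit, no Lojasiewicz, no non-polynomial bumps.

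\textbf{The Bezout step.} Saying the rank condition is ``locally cut out by $m$ of its $(n-m+1)\times(n-m+1)$ minors'' is not enough: to invoke Bezout you need a single global system of $n$ polynomials in $n$ variables whose common zeros on $M$ are exactly the critical points of $r|_M$ and are nonsingular (hence isolated). Which $m$ minors work varies with the point, and a fixed choice may define a positive-dimensional locus. The paper resolves this as follows. Fix one $(n-m)\times(n-m)$ minor $\rho$ of the Jacobian of $(q_1,\ldots,q_{n-m})$ that does not vanish identically on $M$; irreducibility of $M$ forces $Z(\rho)\cap M$ to have codimension $\ge 1$. A generic Morse polynomial $r$ has all of its critical points on $M$ lying in $M\setminus Z(\rho)$. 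On that open set one writes down an explicit polynomial tangent frame $X_{n-m+1},\ldots,X_n$ (determinantal expressions in the $\partial q_i$), and the system
\[
q_1=\cdots=q_{n-m}=0,\qquad X_{n-m+1}(r)=\cdots=X_n(r)=0
\]
captures exactly the critical points. The Jacobian of this system is block-triangular with nonsingular blocks (one from regularity of $M$, the other from $r$ being Morse), so each solution is nonsingular and Bezout applies with the degree bound $d_0^{n-m}\bigl((n-m)(d_0-1)+2d-1\bigr)^m$. This use of irreducibility and of a Morse function whose critical points avoid a prescribed codimension-one set is the mechanism that makes the count global; your proposal does not supply it.
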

The proof of \Cref{theorem A} then consists in approximating both the map $f$ and the manifold $M$ by algebraic counterparts, therefore reducing the problem to the case of \Cref{theorem B}.
%%sb omits
%%\begin{remark} add reference to Basu--Parida
%%\end{remark}

\subsection{Semialgebraic--type sets on a smooth manifold}\label{sat}The Oleinik--Petrovsky--Thom--Milnor bound was generalized in the context of semialgebraic geometry to different kinds of sets described by polynomial equations and inequalities; see for example \cite{basuAlgorithmsRealAlgebraic2006}. The most general objects are semialgebraic sets. A semialgebraic set on a set of polynomials $p_1,\dots, p_s\in \R[x_1,\dots,x_n]$ is a set of the form  
\[
\bigcup_i\bigcap_{j=1}^s \{ p_j*_{ij}0\},\]
where $*_{ij}\in \{<,>,=\}$. 

In the same spirit, on a Riemannian manifold $(M,g)$, we say that $S\subseteq M$ is a set of semialgebraic--type on a family $\F=\{f_1,\dots,f_s\}$, where $f_i\in C^1(M,\R)$, if
\[
S\coloneqq\bigcup_i\bigcap_{j=1}^s \{ f_j*_{ij}0\},\]
where $*_{ij}\in \{<,>,=\}$. By introducing a condition number $k(\F)$ for finite families, we generalize \cref{theorem A} to obtain a bound for the sum of Betti numbers of closed semialgebraic type set. The bound, inspired by \cite{basuBettiNumbersSign2004}, is polynomial of order $m$ in both the condition number and the number $s$ of functions in the family:
\begin{thm} \label{theorem C}
    Let $(M,g)$ be a smooth Riemannian manifold of dimension $m$. There exists a constant $c_4(M,g)>0$ such that for every smooth family $\F$, and every closed semialgebraic--type set $S$ on the family $\F$, we have  
    \[b(S)\le c_4 \cdot s^m k(\F)^m .\]
\end{thm}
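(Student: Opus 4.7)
My plan is to adapt the Basu--Pollack--Roy proof of the semialgebraic Thom--Milnor bound \cite{basuBettiNumbersSign2004} to the smooth Riemannian setting, using \cref{theorem A} in place of the classical algebraic Thom--Milnor estimate. In the algebraic setting, the key reduction is that the Betti numbers of a closed semialgebraic set defined by sign conditions on $s$ polynomials in a variety of dimension $m$ are bounded by a sum, over subfamilies $J\subseteq\{1,\dots,s\}$ of size at most $m$, of the Betti numbers of the joint zero sets $\bigcap_{j\in J}\{P_j=0\}$; each of those is then controlled by Thom--Milnor. Replacing that ingredient with \cref{theorem A} should yield the claimed bound essentially verbatim.

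The main steps are as follows. First, decompose the closed semialgebraic-type set $S$ as a union of sign-condition strata with $*_{ij}\in\{\ge,\le,=\}$. Following \cite{basuBettiNumbersSign2004}, slightly perturb every non-strict inequality $f_j\ge 0$ to $f_j+\varepsilon_j>0$ (and analogously for $\le$), producing an open set $\widetilde{S}$ homotopy equivalent to $S$. The $\varepsilon_j$ are chosen small in comparison with the distance $\delta(\F)$ of the family from the discriminant, so that no perturbed subfamily $f_J+\varepsilon_J$ with $|J|\le m$ falls into $\Delta$. Next, apply a Mayer--Vietoris / nerve-cover spectral sequence to the open cover of $\widetilde{S}$ by the individual strict conditions. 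The $E_1$ page involves Betti numbers of joint zero sets $Z(f_J)$ of subfamilies of size at most $m$, producing an estimate of the shape
\begin{equation*}
b(S)\;\le\;\sum_{j=0}^{m}\binom{s}{j}\,\max_{|J|=j}\,b\bigl(Z(f_J)\bigr).
\end{equation*}

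Each term is then controlled by \cref{theorem A}, giving $b(Z(f_J))\le c_1 k(f_J)^m$. Provided the condition number $k(\F)$ of the family is defined so that $k(f_J)\le C\cdot k(\F)$ uniformly in every $|J|\le m$, and using the elementary inequality $\sum_{j=0}^m\binom{s}{j}\le(m+1)s^m$, one arrives at the target estimate $b(S)\le c_4\cdot s^m k(\F)^m$, with $c_4=c_4(M,g)$ absorbing dimensional constants.

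The main obstacle is the setup of $k(\F)$ for finite families and the associated perturbation control. One must define $k(\F)$---presumably in the B\"{u}rgisser--Cucker--Tonelli-Cueto style alluded to in the introduction, using fibrewise geometry on the joint $1$-jet bundle---so that a single scalar quantity simultaneously governs the distance from the discriminant of every relevant subfamily, and so that perturbations $\varepsilon_j$ small relative to $\delta(\F)$ can still be chosen large enough to realise the homotopy equivalence between $S$ and $\widetilde{S}$. Once this framework is in place, the remainder is a direct translation of \cite{basuBettiNumbersSign2004}, with the algebraic Thom--Milnor bound replaced by \cref{theorem A}.
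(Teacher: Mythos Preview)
Your overall strategy is the paper's: adapt Basu--Pollack--Roy, replacing the algebraic Thom--Milnor input by \cref{theorem A}, and use that $k(f_J)\le k(\F)$ for every subfamily $|J|\le m+1$ (this is exactly \cref{proposition distance discriminant for families}, so no extra constant $C$ is needed). The final combinatorics $\sum_{j\le m}\binom{s}{j}=O(s^m)$ is also the same.

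Where your sketch has a genuine gap is the mechanism by which the zero sets $Z(f_J)$ appear. You propose to perturb $S$ to an open set $\widetilde S$ and run a nerve/Mayer--Vietoris spectral sequence on an open cover ``by the individual strict conditions''; but the $E_1$ page of such a spectral sequence carries the homology of \emph{intersections of open sets} in the cover, and those are not zero sets---they are again open semialgebraic-type regions. Nothing in that picture forces $Z(f_J)$ to show up. The paper follows \cite[Chapter~7.4]{basuAlgorithmsRealAlgebraic2006} more literally: it introduces auxiliary level sets $\{f_j=0\}$, $\{f_j=\pm\delta/2\}$, $\{|f_j|\ge\delta\}$ and proves, via Thom's isotopy lemma and the stratified product structure (\cref{prop: product stratification}, \cref{cells}), that $b(S)\le b(\Omega)$ where $\Omega=\bigcap_{j=1}^s\Omega_j$ is the disjoint union of all ``cells'' with fixed sign/level pattern. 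It then applies the Mayer--Vietoris inequality in the \emph{intersection-to-union} direction (\cref{Mayer Vietoris}\,(2)), bounding $b_i(\Omega)$ by a sum over $|H|\le m-i$ of $b_{i+|H|-1}\bigl(\bigcup_{j\in H}\Omega_j\bigr)$; each such union is finally reduced (\cref{X_h bound}) to Betti numbers of genuine zero sets $Z(f_L)$, $|L|\le m$, to which \cref{theorem A} applies. So the zero sets enter through the cell decomposition and the $\Omega$-trick, not through an open-cover nerve, and the homotopy equivalences you hoped to get from the $\varepsilon$-perturbation are supplied instead by \cref{prop: product stratification}.
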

The case when $S$ is not closed is more involved. In the classical semialgebraic setting, a technique due to Gabrielov and Vorobjov, see \cite{gabrielovApproximationDefinableSets2009}, allows one to approximate a non--closed semialgebraic set by a closed one having the same Betti numbers. As a consequence for a semialgebraic set $S\subseteq\mathbb{R}^n$ described by $s$ polynomials of degree less than $d$, we have $$b(S)\le O(s^nd^n).$$ Gabrielov and Vorobjov's work, however, does not treat semialgebraic sets restricted to a variety $M$ of dimension $m$, so their argument cannot be applied directly to obtain a bound of the form $O\bigl(s^{m}k(\F)^{m}\bigr)$ for the sum of Betti numbers of semialgebraic type sets. Nevertheless, the fact that the variety $M$ is nonsingular in our setting should simplify the argument, allowing one to prove an analogous version of the Gabrielov--Vorobjov theorem. We plan to analyze this case in a forthcoming work.

%\tableofcontents

\section{Preliminaries}
\subsection{Polynomial approximations}
\subsubsection{Jets and $C^1$--norms}
The usual notion of $C^1$--norm for functions $f\colon M\to \R^k$ relies on the notion of jet bundle. We briefly recall these notions here.
For further details, we refer to \cite{hirschDifferentialTopology1976} and \cite{saundersGeometryJetBundles1989}. In everything that follows we will assume $M$ to be a smooth compact manifold.
Let us denote  the space of $C^1$ maps $f\colon M\to \R^k$ by $C^1(M,\R^k)$. Moreover we will denote the zero set of a function $f=(f_1, \ldots, f_k):M\to \R^k$ by 
\[Z(f)\coloneqq\left\{x\in M\,\bigg|\, f_1(x)=\cdots =f_k(x)=0\right\}.\]

Two functions $f,g\in C^1(M,\R^k)$ are said to be $1$--equivalent at $x\in M$ if for some local parametrization $\varphi \colon U \to M$, $x=\varphi(y)\in \varphi(U)$, we have that:
\[\frac{\partial f\circ \varphi}{\partial y^i}(y)= \frac{\partial g\circ \varphi}{\partial y^i}(y), \]
for any $i=1,\dots, m$. This is an equivalence relation, and the equivalence class of $f$ under this relation is called  the first jet of $f$ at $x$ and is denoted by $j^1_xf$. The set of all first jets at $x$ is denoted by $J^1_x(M,\R^k)$.
\begin{definition}[First jet bundle] The first jet bundle is defined as the set 
\[J^1(M,\R^k)\coloneqq \{j^1_xf\mid x\in M, f\in C^1(M,\R^k)\}.\]
\end{definition}
The manifold $J^1(M, \R^k)$ is a vector bundle: there is a natural projection map
\[\pi \colon J^1(M,\R^k)\to M,\quad j^1_xf\mapsto x.\] 
This map makes the triple $(J^1(M,\R^k),\pi, M)$ a vector bundle. Given $f\in C^1(M,\R^k)$, there is a corresponding section of the jet bundle $j^1f\colon x\mapsto j^1 f(x)=j^1_xf$, called the first jet extension of $f$. The  first jet bundle $J^1(M,\R^k)$ can be shown to be isomorphic to $\R^k\times T^*M$, and thus each element of $J^1(M,\R^k)$ can be identified by a triple $(x,f(x),D_xf)$ for some $f\in C^1(M,\R^k)$. 
\begin{definition}[Fiberwise norm on $J^1(M, \R^k)$]\label{C^1 norm}
    Let $\|\cdot\|\colon J^1(M,\R^k)\to\R$
    be a smooth function such that, when restricted to any fiber $J^1_x(M,\R^k)$, it defines a norm. Then, for any $f\in C^1(M,\R^k)$, we define its $C^1$--norm as $$\|f\|_{C^1(M,\R^k)}\coloneqq \max_{x\in M}\|j^1f(x)\|.$$
\end{definition}
Note that the maximum is attained by the compactness of $M$.

\begin{remark}
   On each fiber \( J^1_x(M, \mathbb{R}) \), any two norms \( \|\cdot\|^1_x \) and \( \|\cdot\|^2_x \) are equivalent, as all norms on a finite-dimensional vector space are equivalent. Moreover, since \( M \) is compact and these norms vary continuously with \( x \), it follows that any two \( C^1 \)-norms \( \|f\|^1_{C^1(M, \mathbb{R}^k)} \) and \( \|f\|^2_{C^1(M, \mathbb{R}^k)} \) are also equivalent. The topology on \( C^1(M, \mathbb{R}^k) \) induced by any of these norms is called the Whitney strong topology.
\end{remark}

 If $(M,g)$ is a smooth Riemannian manifold, we can define a fiberwise norm on $J^1(M,\R^k)$ in a canonical way by using the Riemannian metric and the standard scalar product in $\R^k$. 
 We set $$\|j^1f(x)\|\coloneqq \|f(x)\|+\|D_xf\|.$$ 
 Indeed, $\|D_xf\|\coloneqq \max_{\|v\|=1}\|D_xf(v)\|,$ where the norm of $v\in T_xM$ is computed using the Riemannian structure. 
 
   This is the fiberwise norm we will use for the rest of the paper.
\subsubsection{Quantitative Weierstrass approximation}
The first ingredient in the proof of \cref{theorem A} is a quantitative version of the Stone--Weierstrass theorem. It enables us to approximate a $C^1$--function by a polynomial one, with the approximation error measured by the $C^1$--norm of the function. 
\Cref{bagbi}, presented below, provides a general statement in this direction and can be found in \cite{bagbyMultivariateSimultaneousApproximation2002}.
\begin{theorem}[Quantitative Weierstrass approximation]    
\label{bagbi}
    Let $M\subseteq \R^n$ be a compact set with the property that any two points $a,b\in M$ can be joined by a rectifiable curve in $M$ whose length is  $O(|a-b|)$.
    Let $f\in C^1(U,\R^k)$ where $U$ is an open neighborhood of $M$. Then, there exists a constant $c_0=c_0(M)>0$ such that for each  $d \ge 1$ there is a polynomial map $p=(p_1,\dots, p_k)$, where each $p_i$ is of degree $\le d$, which satisfies:
    \begin{equation}\label{bagbi eq}
        \max_{x\in M} \|f(x)- p(x)\|\le \frac{c_0}{d} \max_{x\in M}\big(\|f(x)\|+{\max_{\substack{\|v\|=1 \\ v\in \R^n}}\|D_xfv\|} \big).
    \end{equation}
\end{theorem}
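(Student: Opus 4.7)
The plan is to reduce the statement to the scalar case $k=1$ (the vector case follows componentwise, absorbing a factor of $k$ in the constant) and then combine a Whitney-type extension with the classical multivariate Jackson inequality.

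The first step uses the hypothesis that any two points $a,b\in M$ can be joined by a rectifiable curve in $M$ of length at most $L\,|a-b|$, where $L$ depends only on $M$. For any $f\in C^1(U,\R)$ with $U$ a neighborhood of $M$, integrating $Df$ along such a curve yields
\[
|f(a)-f(b)|\;\leq\; L\cdot \max_{x\in M}\max_{\|v\|=1}|D_xf(v)|\cdot |a-b|,
\]
so $f|_M$ is Lipschitz with constant controlled by the right-hand side of \eqref{bagbi eq}. By the McShane--Whitney extension theorem I would extend $f|_M$ to a Lipschitz function $\tilde f$ on $\R^n$ with the same Lipschitz constant and with $\|\tilde f\|_\infty=\|f\|_{L^\infty(M)}$, then multiply by a smooth cutoff equal to $1$ on $M$ and supported in a fixed cube $Q=[-R,R]^n$ containing $M$; this preserves both bounds up to a constant depending only on $M$.

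The second step is the classical multivariate Jackson theorem: for any Lipschitz function $g$ on $Q$ and every $d\ge 1$ there exists a polynomial $p$ of total degree $\le d$ with
\[
\|g-p\|_{L^\infty(Q)}\;\leq\;\frac{C(n,R)}{d}\,\bigl(\|g\|_{L^\infty(Q)}+\mathrm{Lip}(g)\bigr).
\]
The usual derivation is to start from the one-dimensional Jackson theorem (approximation of $2\pi$-periodic Lipschitz functions by trigonometric polynomials of degree $d$ with error $O(\mathrm{Lip}(g)/d)$, transferred to algebraic polynomials on $[-R,R]$ via the Chebyshev substitution) and tensor-product the univariate Jackson operators; passing from coordinatewise degree $d$ to total degree $d$ costs only a dimension-dependent factor. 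Restricting the resulting polynomial to $M\subseteq Q$ and composing the estimates from the two steps gives \eqref{bagbi eq}.

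The main technical obstacle is bookkeeping: one must verify that the constant $c_0$ can be chosen to depend only on $M$ (through its diameter and the quasiconvexity constant $L$), not on $f$ or $d$, and that the cutoff does not inflate the Lipschitz norm by more than an $M$-dependent factor. Both the Lipschitz extension and the Jackson approximation have explicit constants in the standard literature, so these can be combined to yield a single $c_0(M)$ as required.
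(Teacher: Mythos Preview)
The paper does not supply its own proof of this theorem; it is quoted from \cite{bagbyMultivariateSimultaneousApproximation2002} and used as a black box, so there is nothing in the paper to compare your argument against directly.

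That said, your outline is a correct and standard elementary derivation of the $C^0$ Jackson-type estimate on quasiconvex compacta. Two minor corrections are worth noting. First, the raw McShane--Whitney extension need not satisfy $\|\tilde f\|_\infty = \|f\|_{L^\infty(M)}$ on all of $\R^n$; but truncating the extension at the levels $\pm\|f\|_{L^\infty(M)}$ achieves this without increasing the Lipschitz constant. Second, once you have a bounded Lipschitz function on the cube $Q$, the cutoff step is superfluous, since the multivariate Jackson theorem on $Q$ applies directly to such functions (and multiplying by a cutoff is precisely where an extra $\|f\|_{L^\infty(M)}$ term would enter the Lipschitz constant, so omitting it only tightens the bookkeeping). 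With these tweaks the constants track through exactly as you describe and yield \eqref{bagbi eq} with $c_0$ depending only on $n$, the diameter of $M$, and the quasiconvexity constant $L$.

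For context, the Bagby--Bos--Levenberg theorem cited in the paper is considerably stronger: it gives simultaneous approximation of $f$ and all its derivatives up to a given order on Whitney-regular compacta. Your Lipschitz-extension-plus-Jackson route is more elementary and perfectly adequate for the $C^0$ bound that the paper actually uses downstream (in \Cref{weierstrass appro to use} and the proof of \Cref{theorem A}).
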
 %If $M\subseteq \R^n$ we are given a function $f\colon M\to \R^k$, we can access only tangential information, i.e.,  the derivatives of $f$ along tangent directions $v\in T_xM$. The next Proposition shows that a quantitative version of the Stone--Weierstrass Theorem also holds true under this weaker assumption on $f$.
Notice that the right-hand side of \eqref{bagbi eq} resembles the $C^1$--norm of $f$ given in \Cref{C^1 norm}, i.e. $$\|f\|_{C^1(M,\R^k)}=\max_{x\in M}\|j^1f(x)\|=\max_{x\in M}(\|f(x)\|+\|D_xf\|),$$ computed using the fiberwise norm on $J^1_x(M,\R^k)$ induced by the Riemannian metric $i^*g_{\R^n}$.
However, the two quantities are actually distinct.
 Indeed, in \Cref{bagbi}, $f$ is defined on an open neighborhood $U$ of $M$, and the quantity $ D_xf(v)$ is maximized over all $v\in \R^n$ such that $\|v\|=1$. On the other hand, by  \Cref{C^1 norm},
$$\|j^1f(x)\|=\|f(x)\|+\|D_xf\|_{i^*g_{\R^n}}=\|f(x)\|+\max_{\substack{\|v\|=1 \\ v\in T_xM}} D_xfv,$$ where we see that $D_xf(v)$ is maximized only for $v\in T_xM$ such that $\|v\|=1$. Thus, a priori, $\|j^1f\|$ is a smaller quantity.  In fact, the next proposition shows that, if $M$ is a compact manifold, in \Cref{bagbi} it is sufficient to consider $\|f\|_{C^1(M,\mathbb{R}^k)}$. In this case, we also require $f$ to be defined only on $M$ and not on an open neighborhood $U$.

For the proof of next result, we recall from \cite{federerCurvatureMeasures1959} the notion of \emph{reach} of an embedded manifold $M\subseteq \R^n$: it is the largest $r>0$ such that each point in the set $\{x\in \R^n\mid \dist(x,M)<r\}$ has a unique nearest point in $M$.

\begin{prop} \label{weierstrass appro to use}
    Let $M$ be a smooth compact submanifold of $\R^n$. Let $c_0$ be the constant provided by \Cref{bagbi}, then, for any  $f\in C^1(M,\R^k)$ and any integer $d \ge 1$, there exists a polynomial map $p=(p_1,\dots,p_k)$,  where each $p_i$ is of degree $\le d$, which satisfies:
    \[\|f-p\|_{C^0(M,\R^k)}\le \frac{c_0}{d}\|f\|_{C^1(M,\R^k)}.\]
\end{prop}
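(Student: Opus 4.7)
The plan is to extend $f$ to an open neighborhood of $M$ in $\R^n$ in such a way that the Euclidean $C^1$-quantity appearing in \Cref{bagbi} coincides with the intrinsic $C^1$-norm $\|f\|_{C^1(M,\R^k)}$; once this is achieved, \Cref{bagbi} applies directly with the same constant $c_0$.

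Since $M$ is a smooth compact submanifold of $\R^n$, one has $r := \reach(M) > 0$. Set $U := \{y \in \R^n \,:\, \dist(y,M) < r\}$ and let $\pi \colon U \to M$ be the (smooth) nearest-point projection. I would define the extension
\[\tilde{f} \colon U \to \R^k, \qquad \tilde{f}(y) := f(\pi(y)),\]
so that $\tilde{f} \in C^1(U,\R^k)$ and $\tilde{f}|_M = f$. The crucial geometric input is the classical fact that, for every $x \in M$, the differential $D_x\pi \colon \R^n \to T_xM$ is the orthogonal projection $P_{T_xM}$: indeed, normal directions at $x$ are annihilated (the footpoint of $x+tv$ on $M$ remains $x$ to first order when $v \perp T_xM$), while tangent directions are fixed to first order. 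By the chain rule $D_x\tilde{f}(v) = D_xf(P_{T_xM}v)$ for all $v\in\R^n$, and since $P_{T_xM}$ has operator norm one and restricts to the identity on $T_xM$, one obtains the key equality
\[\max_{\substack{v\in\R^n \\ \|v\|=1}}\|D_x\tilde{f}(v)\| \;=\; \max_{\substack{u\in T_xM \\ \|u\|=1}}\|D_xf(u)\| \;=\; \|D_xf\|.\]

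Next, I would apply \Cref{bagbi} to $\tilde{f}$ on $U$, taking the compact set in Bagby's statement to be $M$ itself. The remaining hypothesis to check is that any two points $a,b\in M$ can be joined by a rectifiable curve in $M$ of length $O(|a-b|)$; this is the standard bi-Lipschitz equivalence of the induced Riemannian distance on $M$ and the ambient Euclidean distance on $M$, which follows from a routine compactness/covering argument. (If $M$ is disconnected, one applies this to each component and glues the resulting approximations via polynomial bump functions on disjoint compact sets.) \Cref{bagbi} then produces a polynomial $p$ of degree $\le d$ with
\[\max_{x\in M}\|f(x)-p(x)\| \;\le\; \frac{c_0}{d}\,\max_{x\in M}\Bigl(\|\tilde{f}(x)\| + \max_{\substack{v\in\R^n \\ \|v\|=1}}\|D_x\tilde{f}(v)\|\Bigr) \;=\; \frac{c_0}{d}\,\|f\|_{C^1(M,\R^k)},\]
where the final equality combines $\tilde{f}|_M = f$ with the identity displayed above.

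The only non-routine point is the identification $D_x\pi = P_{T_xM}$ at points of $M$, which is exactly what forces equality (rather than a mere inequality) between the Euclidean and the intrinsic operator norms of $Df$ along $M$, and hence preserves the original Bagby constant $c_0$. A generic $C^1$ extension would only yield a bound in terms of the Euclidean $C^1$-norm of the extension, which a priori exceeds $\|f\|_{C^1(M,\R^k)}$; the nearest-point projection is precisely the extension that avoids this loss.
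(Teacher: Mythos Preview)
Your proof is correct and follows essentially the same route as the paper: extend $f$ via the nearest-point projection $\pi$, use that $D_x\pi$ restricted to $M$ is the orthogonal projection onto $T_xM$ to identify the Euclidean and intrinsic operator norms, verify the rectifiability hypothesis of \Cref{bagbi}, and conclude. One minor quibble: your parenthetical about the disconnected case (``glue via polynomial bump functions'') does not work as stated, since polynomials cannot vanish identically on an open set; the paper sidesteps this by reducing to connected $M$ in the application (\Cref{theorem A}), and its own proof of the rectifiability constant $\sigma$ likewise tacitly assumes connectedness.
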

\begin{proof}
To deduce \Cref{weierstrass appro to use} from \Cref{bagbi}, we need to establish two things:
\begin{enumerate}
    \item The existence of a constant $\sigma > 0$ such that for each pair of points $a,b\in M$, there exists a smooth curve $\gamma$ connecting $a$ and $b$ whose length is $\sigma |a-b|$.
    \item An extension of $f \in C^1(M, \mathbb{R}^k)$ to a function $g \in C^1(U, \mathbb{R}^k)$ defined on an open neighborhood $U$ of $M$, whose $C^1$--norm coincides with that of $f$:
    \begin{equation*} 
        \|f\|_{C^1(M, \mathbb{R}^k)} = \max_{x \in M} \left( \|g(x)\| + \|D_x g\| \right).
    \end{equation*}
\end{enumerate}
%\comm{This is ok, but a bit unusual as a structure with this textbf...}
\textit{Existence of $\sigma$:} Consider a positive radius $r$ such that $\text{reach}(M) > r > 0$. Let $U$ be the closed $r$-tubular neighborhood of $M$ defined as
\[
    U \coloneqq \{ x \in \mathbb{R}^n \mid \text{dist}(x, M) \leq r \}.
\]
If $x, y \in M$ satisfy $\|x - y\| < r$, then the line segment $\gamma(t) = x + t(y - x)$ for $t \in [0, 1]$ is contained in $U$. Since $U$ is compact, the nearest point projection $\pi \colon U \to M$ has a bounded differential. Let $\sigma_1 > 0$ be such that
\[
    \|D_x \pi\| \leq \sigma_1 \quad \text{for all } x \in U.
\]
Then,
\[
    \text{dist}_M(x, y) \leq L(\pi \circ \gamma) \leq \sigma_1 \cdot L(\gamma) = \sigma_1 \|x - y\|,
\]
where $L(\cdot)$ denotes the length of the curve.
If $\|x - y\| \geq r$, by compactness of $M$, we can define
    \[
        \sigma_2 \coloneqq \max_{\substack{x, y \in M \\ \|x - y\| \geq r}} \frac{\text{dist}_M(x, y)}{\|x - y\|}.
    \]
Set $\sigma \coloneqq \max\{\sigma_1, \sigma_2\} > 0$. This ensures that for any $x, y \in M$, 
    \[
        \text{dist}_M(x, y) \leq \sigma \|x - y\|.
    \]
\textit{Existence of extension $g$:}  For any $x\in U$, define $g(x)=f(\pi(x))$.
If $x \in M$, we can decompose the tangent space as
    \[
        T_x \mathbb{R}^n = T_x M \oplus N_x M,
    \]
where $N_x M = \ker D_x \pi$. Since
    $
        D_x g = D_{\pi(x)} f \circ D_x \pi,
    $
and $D_x \pi$ vanishes on $N_x M$, we have
    \begin{align*}  
    \max_{x \in M} \left( \|g(x)\| + \|D_x g\| \right) &= \max_{x \in M} \biggl( \|g(x)\| + \max_{\substack{v \in T_x M \oplus N_x M\\ \|v\|=1}} \|D_x g(v)\| \biggr) \\
    &= \max_{x \in M} \biggl( \|f(x)\| + \max_{\substack{v \in T_x M \\ \|v\|=1}} \|D_x f(v)\| \biggr) \\
    &= \|f\|_{C^1(M, \mathbb{R}^k)}. \phantom{\biggl( }
    \end{align*}    
Now \Cref{weierstrass appro to use} follows directly by \Cref{bagbi}.
\end{proof}

\subsubsection{Distance from discriminant}
We introduce now the notion of discriminant in the space of $C^1$--functions. The discriminant is defined as the set of functions whose zero sets are not regular, and the distance from this discriminant is used to quantify the regularity of a function.

We recall some elementary definitions. For $f\in C^1(M,\R^k)$, we say that $f$ is transversal to a submanifold $Y\subseteq \R^k$, and denote it by $f\pitchfork Y$, if for every $x\in M$ such that $f(x)\in Y$, we have 
\[\im D_xf+T_{f(x)}Y=T_{f(x)}\R^k.\]
In particular, when $Y=\{0\}$, if $f\pitchfork \{0\}$ we say that $0$ is a regular value. Equivalently, $0$ is a regular value if for every $x\in M$ such that $f(x)=0$ we have that $\rank D_x f=k$. If $M$ is of dimension $m$ and $m<k$, then this implies that $f^{-1}(0)=\emptyset$. Since this case is trivial, we will always implicitly assume that $m\ge k$. The following definitions work also in the case when $M$ is not compact. In case $M$ is compact,  every infimum is attained at some point and so can be replaced by a minimum. 
\begin{definition}[Discriminant]\label{def:discriminant}
    We define the discriminant $\Delta \subseteq C^1(M,\R^k)$ as
    \[\Delta\coloneqq \left\{f\in C^1(M,\R^k)\,\bigg|\, \exists x\in M:\, f(x)=0, \, \mathrm{rk}(D_xf)<k \right\}.\]
    For any $f\in C^1(M,\R^k)$ we define its distance from the discriminant as 
    \[{\delta(f)}:=\dist_{C^1}(f,\Delta)=\inf_{g\in \Delta} \|f-g\|_{C^1(M,\R^k)}\]
\end{definition}
Since $\Delta$ belongs to an infinite--dimensional space, it is more convenient, when computing distances, to consider a finite dimensional object sitting in the space of jets which captures the same information.
\begin{definition}\label{definition Sigma}
    Let $\Sigma_x\subseteq J^1_x(M,\R^k)$:
    \[\Sigma_x\coloneqq \bigg\{j^1f(x)\;\bigg|\; f(x)=0, \rank D_xf<k\bigg\}\]
    We define $\Sigma\coloneqq \bigcup_{x\in M}\Sigma_x\subseteq J^1(M,\R^k)$.
\end{definition}

%\begin{definition} \label{definition distance from discriminant}
%    For $f\in C^1(M,\R^k)$, define
%    $$\delta(f)\coloneqq \inf_{x\in M}\dist(j^1f(x),\Sigma_x)$$
%\end{definition}}

Next we show that in the case $\|\cdot\|_{C^1}$ is induced by a Riemannian metric, $\delta(f)$ can be computed in terms of the singular values of $D_xf$. Given the scalar product $g_x$ on $T_xM$ and the standard scalar product on $\R^k$, the singular values of $D_xf$ are defined by:
\[\sigma_i(D_xf)\coloneqq\min_{\substack{V\subseteq T_xM \\ \dim V = m-i+1}}\max_{\substack{v\in V\\ \|v\|_{g_x}=1}}\|D_xf(v)\|\]
It is clear that $\sigma_1\ge \dots \ge\sigma_k\ge 0 $ are the only non zero singular values. Moreover, {note that} we have $j^1f(x)\in \Sigma_x$ if and only if $f(x)=0$ and $\sigma_k(D_xf)=0$. 
\begin{lemma}\label{lem: distance discriminant = distance sigma} Let $(M,g)$ be a Riemannian manifold. Then
    $$\dist(j^1f(x),\Sigma_x)= \|f(x)\|+ \sigma_k(D_x f).$$
\end{lemma}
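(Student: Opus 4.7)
The plan is to decouple the ``value'' and ``derivative'' components of the jet, thereby reducing the statement to the Eckart--Young--Mirsky characterisation of the spectral-norm distance to the rank-deficient cone.

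The first step is to unfold the definitions. Identifying $J^1_x(M,\R^k)\cong \R^k\oplus \mathrm{Hom}(T_xM,\R^k)$, a jet $j^1g(x)$ corresponds to the pair $(g(x),D_xg)$, and the Riemannian fibrewise norm chosen after \Cref{C^1 norm} decouples additively as $\|(y,A)\|=\|y\|+\|A\|$, with $\|A\|$ the operator norm of $A\colon (T_xM,g_x)\to\R^k$. By \Cref{definition Sigma}, $\Sigma_x$ consists exactly of those jets with first component zero and second component of rank strictly less than $k$, so
\[
\dist(j^1f(x),\Sigma_x) \;=\; \inf_{\substack{A\in\mathrm{Hom}(T_xM,\R^k)\\ \rank A<k}}\bigl(\|f(x)\|+\|D_xf-A\|\bigr) \;=\; \|f(x)\|+\inf_{\rank A<k}\|D_xf-A\|.
\]

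The remaining ingredient is the linear-algebraic identity $\inf_{\rank A<k}\|L-A\|=\sigma_k(L)$ for a linear map $L\colon (V,g)\to\R^k$ on an $m$-dimensional inner product space (with $m\ge k$). I would prove this by singular value decomposition: write $L=\sum_{i=1}^{k}\sigma_i u_i v_i^*$ with orthonormal families $\{v_i\}\subseteq V$ and $\{u_i\}\subseteq \R^k$. The upper bound is witnessed by the truncation $A_\star:=\sum_{i=1}^{k-1}\sigma_i u_i v_i^*$, which has rank at most $k-1$ and satisfies $L-A_\star=\sigma_k u_k v_k^*$, an operator of spectral norm exactly $\sigma_k$. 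For the lower bound, any $A$ with $\rank A<k$ has $\dim\ker A\ge m-k+1$, so by dimension counting $\ker A$ intersects $\mathrm{span}(v_1,\dots,v_k)$ in at least a line; choosing a unit vector $w=\sum c_i v_i$ in the intersection gives $(L-A)w=Lw$ and
\[
\|(L-A)w\|^2 \;=\; \Bigl\|\sum_{i=1}^{k} c_i\sigma_i u_i\Bigr\|^2 \;=\; \sum_{i=1}^{k}c_i^2\sigma_i^2 \;\ge\; \sigma_k^2.
\]

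I do not expect any serious obstacle: the result is essentially the min--max characterisation of singular values, repackaged in jet-bundle language. The only genuine observation is that the value--derivative decoupling of the distance relies on the additive form of the fibrewise norm fixed in the preceding section; had a different combination (for instance the Pythagorean $\sqrt{\|y\|^2+\|A\|^2}$) been chosen, the final formula would need a corresponding adjustment.
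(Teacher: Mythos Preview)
Your proposal is correct and follows essentially the same route as the paper: decouple the value and derivative components using the additive fibrewise norm, then identify the derivative part as the spectral-norm distance from $D_xf$ to the rank-deficient cone. The only cosmetic difference is that you phrase the linear-algebra step as Eckart--Young--Mirsky via an explicit SVD truncation, whereas the paper argues both inequalities directly from the min--max definition of $\sigma_k$ (constructing $g_V$ from a $(k-1)$-dimensional subspace $V$ for the upper bound, and restricting to $\ker D_xg$ for the lower bound); the underlying kernel-dimension argument is identical.
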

\begin{proof}
    By the definition of $\Sigma_x$, we have that
    \begin{align} \label{singular value}
        \dist(j^1f(x),\Sigma_x)&= \inf_{j^1g(x)\in \Sigma_x} \|j^1f(x)-j^1g(x)\| \nonumber \\
        &= \inf_{j^1g(x)\in \Sigma_x} \big(\|f(x)\|+\max_{v\in T_x^1M} (D_xf(v)-D_xg(v))\big)
    \end{align} 
    Let $V\subseteq T_xM$ be a $(k-1)$--dimensional subspace. Consider $g_V\in C^1(M,\R^k)$ such that $g_V(x)=0$,  $D_xg_V(v)=D_xf(v)$ for each  $v\in V$ and $D_xg_V(v)=0$ for each $v\in V^\perp$. For example, we can assume $g_V$ to be an affine function. Note that $j^1g_V(x)\in \Sigma_x$ because $\dim (\ker D_xg_V)=m-k+1$. 
    
    Taking the infimum in (\ref{singular value}) over such $g_V$, we obtain
    \begin{align*}
        \dist(j^1f(x),\Sigma_x)&\le\|f(x)\|+\inf_{\substack{V\in T_xM \\ \dim V=k-1}} \max_{\substack{v\in V^\perp\\ \|v\|=1}}\|D_xf(v)\|\\
        &=\|f(x)\|+\inf_{\substack{V^\perp\in T_xM \\ \dim V^\perp=m-k+1}} \max_{\substack{v\in V^\perp\\ \|v\|=1}}\|D_xf(v)\|\\
        &= \|f(x)\|+\sigma_k(D_xf)
    \end{align*}
    The opposite inequality follows from \eqref{singular value} by noticing that $\dim \ker D_xg\ge m-k+1$, indeed \begin{align*}
        \max_{v\in T_x^1M}\|D_xf(v)-D_xg(v)\| &\ge \max_{v\in (\ker D_xg )}\|D_xf(v)\| \\
        &\ge \sigma_k(D_xf).
    \end{align*}
\end{proof}

The next Proposition states that the distance from the discriminant $\Delta$ in the infinite--dimensional space $C^1(M,\R^k)$ coincides with the minimum of the fiberwise distance of $j^1f$ to the set of singular jets.
\begin{prop}\label{equality distance discriminant}
    The following equality holds:
    $$\delta(f)={\inf_{x\in M}\dist(j^1f(x),\Sigma_x)}.$$
\end{prop}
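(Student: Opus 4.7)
The plan is to prove the two inequalities separately. For the easy direction $\delta(f)\ge \inf_{x\in M}\dist(j^1 f(x),\Sigma_x)$, I observe that any $g\in\Delta$ has, by definition, a witness point $x_0\in M$ with $j^1 g(x_0)\in \Sigma_{x_0}$, so
\[
\|f-g\|_{C^1}\;\ge\;\|j^1 f(x_0)-j^1 g(x_0)\|\;\ge\;\dist(j^1 f(x_0),\Sigma_{x_0})\;\ge\;\inf_{x\in M}\dist(j^1 f(x),\Sigma_x),
\]
and taking the infimum over $g\in\Delta$ yields the inequality.

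For the reverse inequality, I first note that $\Sigma_x$ is closed in each fiber of $J^1(M,\R^k)$, being the zero section times the determinantal locus $\{L:\rank L<k\}$; hence by compactness of $M$ the continuous function $x\mapsto \dist(j^1 f(x),\Sigma_x)$ attains its infimum at some $x_0$, where in turn the fiberwise distance is realized by an element $\tau=(0,L)\in \Sigma_{x_0}$. Setting $a:=f(x_0)$ and $B:=D_{x_0}f-L$, we have $\|a\|+\|B\|=\inf_{x\in M}\dist(j^1 f(x),\Sigma_x)=:\beta$. The reverse inequality will follow if, for every $\eta>0$, I can build a function $h\in C^1(M,\R^k)$ with $j^1 h(x_0)=(a,B)$ and $\|h\|_{C^1}\le \|a\|+\|B\|+\eta$: the perturbation $g:=f-h$ then satisfies $g(x_0)=0$ and $D_{x_0}g=L$, so $g\in\Delta$, and $\|f-g\|_{C^1}=\|h\|_{C^1}\le\beta+\eta$, giving $\delta(f)\le\beta$ after letting $\eta\to 0$.

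The main obstacle is constructing $h$ with this tight $C^1$ bound. A naive ansatz $h(y)=a+\psi(y)B(y-x_0)$ in a normal chart, with $\psi$ a cutoff at $x_0$, suffers from the cross term $\|D\psi\|\cdot\|B(y-x_0)\|$, which contributes a quantity of order $\|B\|$ that does not vanish as the support of $\psi$ shrinks. The remedy is to replace the unbounded linear factor $y-x_0$ by an oscillatory map having the same first jet at $x_0$ but arbitrarily small sup norm. Concretely, in normal coordinates $(y^1,\dots,y^m)$ at $x_0$ I set
\[
v_\epsilon(y):=\sum_{i=1}^m \epsilon\,\sin(y^i/\epsilon)\,e_i,
\]
which satisfies $v_\epsilon(x_0)=0$, $D_{x_0}v_\epsilon=\id$, $\|v_\epsilon\|_\infty\le\sqrt{m}\,\epsilon$, and $\|Dv_\epsilon\|_\infty\le 1$. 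Taking a smooth cutoff $\psi$ supported in a small normal ball with $\psi(x_0)=1$, the candidate
\[
h(y):=a+\psi(y)\,B\bigl(v_\epsilon(y)\bigr)
\]
satisfies $j^1 h(x_0)=(a,B)$, since the cross term in $D_{x_0}h$ vanishes because $v_\epsilon(x_0)=0$, and the product rule yields
\[
\|h\|_{C^1}\;\le\;\|a\|+\|B\|+O\bigl(\epsilon\,\|B\|\bigr),
\]
with implicit constant depending only on $\psi$ and on the (arbitrarily small) Riemannian distortion on its support. Choosing $\epsilon$ small enough gives the required bound and completes the proof.
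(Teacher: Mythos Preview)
Your proof is correct and follows the same overall strategy as the paper: the easy inequality is identical, and for the hard one both arguments reduce to constructing a function $h$ with prescribed first jet $(a,B)$ at $x_0$ and $C^1$--norm arbitrarily close to $\|a\|+\|B\|$, then set $g=f-h\in\Delta$.

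The difference lies in how this $h$ is built. The paper uses a Gaussian--damped linear map in local coordinates,
\[
s_\lambda(v)=e^{-\lambda\|v\|^2}\sigma_1(v)+\sigma_0,
\]
and shows (\cref{gaussian}) that for $\lambda$ large the $C^0$--norm is $\|\sigma_0\|+\epsilon$ while the derivative norm is $\|\sigma_1\|+\epsilon$; no separate cutoff is needed since the Gaussian provides the decay. Your construction instead keeps an external cutoff $\psi$ and kills the problematic cross term $\|D\psi\|\cdot\|B(y-x_0)\|$ by replacing the linear factor with the bounded oscillatory map $v_\epsilon(y)=\sum_i\epsilon\sin(y^i/\epsilon)\,e_i$, which has the same $1$--jet at the origin but $\|v_\epsilon\|_\infty\le\sqrt{m}\,\epsilon$ and $\|Dv_\epsilon\|\le 1$. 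Both tricks achieve the same goal; yours is arguably more elementary (no derivative estimates for the Gaussian envelope are needed), while the paper's is slightly cleaner in that it avoids juggling the two independent small parameters (support radius of $\psi$ for the metric distortion, then $\epsilon$ for the cross term). One minor point worth making explicit in your write--up is the order of limits: first shrink $\mathrm{supp}(\psi)$ to control the Riemannian distortion, then send $\epsilon\to 0$ with $\psi$ fixed, since the implicit constant in your $O(\epsilon\|B\|)$ depends on $\|D\psi\|_\infty$.
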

For the proof, we need the following elementary lemma.
\begin{lemma} \label{gaussian}
    Given $\sigma=(\sigma_0,\sigma_1)\in J^1_0(\R^m,\R^k)$ and any $\epsilon,r>0$,  there exists $\lambda>0$ such that the function
    \[s_\lambda(v)\coloneqq e^{-\|v\|^2\lambda}\sigma_1(v)+\sigma_0\]
    satisfies: 
    \begin{enumerate}    
        \item[$(a)$] $j^1s_\lambda(0)=\sigma$;
        \item[$(b)$]  $\sup_{v\in \R^m}\|s_\lambda(v)\|=\|\sigma_0\|+\epsilon$;
        \item[$(c)$]   $\sup_{v\in \R^m}\|D_vs_\lambda\|=\|\sigma_1\|+\epsilon$;
        \item[$(d)$]   $\sup_{\|v\|\ge r}\|D_vs_\lambda\|\le \epsilon$.
    \end{enumerate}
\end{lemma}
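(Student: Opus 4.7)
The plan is to check each item directly from the closed form $s_\lambda(v)=e^{-\|v\|^2\lambda}\sigma_1(v)+\sigma_0$, using only standard one-variable Gaussian estimates on $t\mapsto t\,e^{-t^2\lambda}$ and $u\mapsto(2u-1)e^{-u}$. Item $(a)$ is immediate: at $v=0$ the factor $e^{-\|v\|^2\lambda}$ equals $1$ and its gradient vanishes, so $s_\lambda(0)=\sigma_0$ and $D_0 s_\lambda=\sigma_1$, giving $j^1 s_\lambda(0)=\sigma$.

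For $(b)$, I would bound $\|s_\lambda(v)-\sigma_0\|=e^{-\|v\|^2\lambda}\|\sigma_1(v)\|\le\|\sigma_1\|\,\|v\|\,e^{-\|v\|^2\lambda}$ from the definition of the operator norm. The scalar function $t\mapsto t\,e^{-t^2\lambda}$ attains its maximum at $t=1/\sqrt{2\lambda}$ with value $(2e\lambda)^{-1/2}$, so $\sup_v\|s_\lambda(v)\|\le\|\sigma_0\|+\|\sigma_1\|/\sqrt{2e\lambda}$; taking $\lambda$ large enough gives the bound $\|\sigma_0\|+\epsilon$ required by $(b)$.

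For $(c)$ and $(d)$, I would expand the differential as
\[
D_v s_\lambda=e^{-\|v\|^2\lambda}\,\sigma_1\circ\bigl(I-2\lambda\, vv^{T}\bigr),
\]
then apply submultiplicativity of the operator norm, noting that $I-2\lambda vv^{T}$ is symmetric with eigenvalues $1$ on $v^{\perp}$ and $1-2\lambda\|v\|^{2}$ on the line through $v$. This yields
\[
\|D_v s_\lambda\|\le\|\sigma_1\|\,e^{-\|v\|^2\lambda}\,\max\bigl(1,\,|1-2\lambda\|v\|^{2}|\bigr).
\]
Setting $u=\lambda\|v\|^{2}\ge0$, an elementary check shows that $e^{-u}\max(1,|1-2u|)\le 1$ for all $u\ge 0$, with equality at $u=0$, which proves $(c)$. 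For $(d)$, when $\|v\|\ge r$ we have $u\ge\lambda r^{2}$, and once $\lambda r^{2}\ge 3/2$ the function $(2u-1)e^{-u}$ is decreasing, so the bound is controlled by $(2\lambda r^{2}-1)e^{-\lambda r^{2}}\|\sigma_1\|$, which tends to $0$ as $\lambda\to\infty$.

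All that remains is to pick a single $\lambda$ large enough to satisfy the thresholds arising in $(b)$ and $(d)$ simultaneously; since both force $\lambda$ to be large, taking the maximum of the two values works. No genuine obstacle appears — the proof is essentially bookkeeping on explicit Gaussian estimates.
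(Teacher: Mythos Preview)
Your proof is correct and follows the same approach as the paper: both compute $D_vs_\lambda$ explicitly and control it by elementary Gaussian estimates, with (a), (b), (d) being straightforward. Your treatment of (c) is in fact sharper than the paper's---by using the exact operator norm of the symmetric matrix $I-2\lambda vv^{T}$ instead of the triangle inequality, you obtain $\sup_v\|D_vs_\lambda\|\le\|\sigma_1\|$ for \emph{every} $\lambda>0$, whereas the paper only gets $\|\sigma_1\|+\epsilon$ after taking $\lambda$ large.
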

\begin{proof}
    Properties $(a)$, $(b)$ and $(d)$ are straightforward. For $(c)$, we compute 
    \begin{align*}
        \sup_{\|w\|=1}\|D_v (e^{-\|v\|^2\lambda}\sigma_1(v)+\sigma_0)(w)\|&= \sup_{\|w\|=1}\|e^{-\|v\|^2\lambda}(\sigma_1(w)-2\lambda \langle v,w\rangle \sigma_1(v))\|\\
        &\le \sup_{\|w\|=1} e^{-\|v\|^2\lambda} \|\sigma_1\|(\|w\|+2\lambda \|w\|\|v\|^2)
    \end{align*}
    Since $e^{-\|v\|^2\lambda}2\lambda \|w\|\|v\|^2\to 0$ uniformly, we can choose $\lambda>0$ sufficiently large so that $$\sup_{\|w\|=1}\|D_v s_\lambda(w)\|\le \|\sigma_1\|+\epsilon.$$
\end{proof}

\begin{proof}[Proof of \Cref{equality distance discriminant}]
    Denote by $$\Delta_x\coloneqq\{g\in C^1(M,\R^k)\mid g\not \pitchfork 0 \text{ at } x\}\subseteq C^1(M,\R^k)$$
    {and by
    \[\widetilde{\delta}(f):=\inf_{x\in M}\dist(j^1f(x),\Sigma_x).\]
    We need to show that $\widetilde{\delta}(f)=\delta(f)=\mathrm{dist}(f, \Delta)$.}
    
    First, we show that $\widetilde{\delta}(f)\le\dist_{C^1}(f,\Delta)$:
    \begin{align*}
        \dist_{C^1}(f,\Delta)&=\inf_{g\in \Delta}\|f-g\|_{C^1(M,\R^k)}\\
        &=\inf_{x\in M}\inf_{g\in \Delta_x}\|f-g\|_{C^1(M,\R^k)}\\
        &\ge \inf_{x\in M}\inf_{g\in \Delta_x}\|j^1f(x)-j^1g(x)\|\\
        &=\inf_{x\in M} \dist(j^1f(x), \Sigma_x)\\
        &=\widetilde{\delta}(f).
    \end{align*}
For the opposite inequality, let $x_0\in M$ and consider: 
\begin{align}
    \inf_{g\in \Delta_{x_0}}\|f-g\|_{C^1(M,\R^k)}&=
    \inf_{\phantom{lj^1+} j^1g({x_0})\in \Sigma_{x_0}\phantom{h(x)}}\|f-g\|_{C^1(M,\R^k)}\nonumber\\
    &=\inf_{j^1h({x_0})\in \Sigma_{x_0}+j^1f({x_0})}\|h\|_{C^1(M,\R^k)}\ \nonumber\\
    &=\inf_{\phantom{ldd}\eta\in \Sigma_{x_0}+j^1f({x_0})\phantom{d+}}\hspace{-10pt}\inf_{j^1h({x_0})=\eta}\|h\|_{C^1(M,\R^k)}. \label{difficult part of distance discri}
\end{align}
Given a jet $\eta=(\eta_0,\eta_1)\in \Delta_{x_0}+j^1f({x_0})$, let $\psi\colon \R^m\to U\subseteq M$ be a local parametrization centered at ${x_0}$ and denote by $\sigma=(\sigma_0,\sigma_1)\coloneqq\psi^*\eta\in J_0(\R^m,\R^k)$ the jet in local coordinates. By \Cref{gaussian}, for any $r,\epsilon>0$, there exists $\lambda>0$ and $s_\lambda\in C^1(\R^m,\R^k)$ such that $j^1s_\lambda(0)=\sigma$.
Set
\[
h_\lambda(x)\coloneqq
\begin{cases}
    s_\lambda\circ \psi^{-1}(x) & \text{for } x\in U\\
    0 & \text{for } x\not\in U.
\end{cases}\]
 Then $j^1h_\lambda({x_0})=(\psi^{-1})^*\psi^*\eta=\eta$. Furthermore, by property $(b)$ of \Cref{gaussian},
\[\sup_{x\in U}\|h_\lambda(x)\|=\sup_{v\in \R^m}\|s_\lambda (v)\| =\|\sigma_0\|+\epsilon=\|\eta_0\|+\epsilon.\]
On $\R^m$ consider a constant metric equal to $\psi^*g(0)$  and let  $$\sup_{x\in U}\|D_xh_\lambda\|\le\sup_{x\in U} \|D_{\psi^{-1}(x)}s_\lambda\|\|D_x\psi^{-1}\|.$$ By continuity, for sufficiently small $r$, we have $ 1-\epsilon\le\|D_x\psi^{-1}\|\le 1+\epsilon$ for each $x\in \psi(B_r(0))$  and by property $(c)$ of \Cref{gaussian},
\[\sup_{x\in \psi(B_r(0))} \|D_{\psi^{-1}(x)}s_\lambda\|\|D_x\psi^{-1}\|\le \|\eta_1\|(1+\epsilon).\]
Also by property $(d)$, 
\[\sup_{x\in \psi(\R^m\setminus B_r(0))} \|D_{\psi^{-1}(x)}s_\lambda\|\|D_x\psi^{-1}\|\le \epsilon \cdot \sup_{x\in \psi(\R^m\setminus B_r(0))}  \|D_x\psi^{-1}\|\]
Thus (\ref{difficult part of distance discri}) becomes:
\begin{align*}
    \inf_{g\in \Delta_{x_0}}\|f-g\|_{C^1(M,\R^k)}&=\hspace{-10pt}\inf_{\phantom{dd}\eta\in \Sigma_{x_0}+j^1f({x_0})\phantom{d+}}\hspace{-10pt}\inf_{j^1h({x_0})=\eta}\|h\|_{C^1(M,\R^k)} \\    &\le\hspace{-10pt}\inf_{\phantom{dd}\eta\in \Sigma_{x_0}+j^1f({x_0})\phantom{d+}}\hspace{-10pt} \|\eta\|+o(\epsilon)
\end{align*}
Taking the infimum over $x_0\in M$ on both sides and letting $\epsilon\to 0$ we conclude that $\dist_{C^1}(f,\Sigma)\le \widetilde{\delta}(f).$
\end{proof}
The distance from the discriminant, however, is not an homogeneous quantity. In particular, we can multiply $f$ by a constant $\lambda>1$ and increase its distance from the discriminant. Therefore, we introduce the following.
\begin{definition}[Condition number]\label{def:condition}
    Let $f\in C^1(M,\R^k)$, the condition number $k(f)$ is defined by
    $$k(f)\coloneqq \frac{\|f\|_{C^1}}{\delta(f)}.$$
\end{definition}
The condition number can be considered as a notion {quantifying the} regularity of a zero set $Z(f)$. %\textcolor{red}{In \Cref{theorem A}, we will prove that the Betti numbers of $Z(f)$ can be estimated by $k(f)$.}
\subsubsection{Stability of Betti numbers}
Since the general idea for the proof of our theorems is to approximate the functions by polynomials, in this section we study how approximation in different norms affect the topology of the zero sets.

We begin by {recalling how regular zero sets are stable under $C^1$--perturbations}. In particular, under small $C^1$--perturbations, the Betti numbers are continuous, and therefore locally constant. The {next proposition makes this more precise}: it is a direct corollary of Thom's first Isotopy Lemma, see \Cref{thom isotopy particular}, but a more elementary proof can be found in \cite[Lemma 2.4]{LerarioLectureNotes} .
\begin{prop} \label{stability lemma}
    Let $(M,g)$ be a compact Riemannian manifold and let  $f\in C^1(M,\R^k)$. Consider $g\in C^1(M,\R^k)$ satisfying \[\|f-g\|_{C^1(M,\R^k)}<\delta(f),\]
    then $(M,Z(f))$ is isotopic to $ (M,Z(g))$ and also $(M,\{f\le 0\})$ is isotopic to $(M,\{g\le 0\})$.
\end{prop}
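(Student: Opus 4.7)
The plan is to interpolate linearly between $f$ and $g$ and apply Thom's first isotopy lemma to the resulting one-parameter family, using the condition $\|f - g\|_{C^1} < \delta(f)$ to guarantee that the interpolant never meets the discriminant.

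First I would define $F\colon M \times [0,1] \to \R^k$ by $F(x,t) := (1-t)f(x) + tg(x)$ and write $F_t := F(\cdot, t)$. For every $t \in [0,1]$ the triangle inequality yields
\[\|F_t - f\|_{C^1} = t\|g - f\|_{C^1} < \delta(f),\]
so $F_t \notin \Delta$ and $0$ is a regular value of each $F_t$. Moreover, whenever $F(x,t) = 0$, the partial differential $D_x F_t$ is surjective, and hence so is the total differential $DF(x,t) = (D_x F_t,\, g(x) - f(x))$. Consequently $Z(F) := F^{-1}(0)$ is a smooth compact $(m-k+1)$-dimensional submanifold-with-boundary of $M \times [0,1]$, and the projection $\pi \colon Z(F) \to [0,1]$ is a proper submersion (surjectivity of $D_x F_t$ provides a tangent lift of $\partial_t$ at each point).

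Next I would stratify $M \times [0,1]$ by the sign of $F$: the hypersurface $Z(F)$, together with the open strata of its complement (refined by the sign pattern of each component $F_i$ when $k > 1$). Whitney $(b)$-regularity along $Z(F)$ is automatic from the transversality of $F$ to $0$, and $\pi$ restricts to a proper submersion on each stratum. Thom's first isotopy lemma then produces a continuous family $\{\Phi_t \colon M \to M\}_{t \in [0,1]}$ of homeomorphisms with $\Phi_0 = \id$, smooth on each stratum, satisfying $\Phi_t(Z(f)) = Z(F_t)$ and $\Phi_t(\{f \le 0\}) = \{F_t \le 0\}$. Evaluating at $t = 1$ yields both claimed isotopies of pairs.

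The main obstacle is the clean verification of Thom's hypotheses, in particular the coordination of the sign strata across different components $F_i$ for the sublevel statement. A more elementary alternative that bypasses stratification theory is to solve $D_x F_t(w) = f(x) - g(x)$ for a tangent vector $w(x,t) \in T_x M$ on a neighbourhood of $Z(F)$ (where surjectivity of $D_x F_t$ persists by openness), extend $w$ smoothly to all of $M \times [0,1]$ via a partition of unity, and integrate the vector field $V(x,t) := (w(x,t),\, \partial_t)$. By construction $F$ is constant along any flowline of $V$ lying in the neighbourhood where the equation holds, so the time-$1$ flow is an ambient isotopy of $M$ carrying $Z(f) \to Z(g)$ and $\{f \le 0\} \to \{g \le 0\}$, as desired.
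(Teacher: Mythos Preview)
Your proposal is correct and matches the paper's approach. The paper does not spell out a proof: it simply states that the result is a direct corollary of the parametric form of Thom's first isotopy lemma (\cref{thom isotopy particular} in the paper, i.e.\ a family $f_t\colon M\to N$ with $f_t\pitchfork\mathcal{N}$ for all $t$ yields $(M,f_0^{-1}(\mathcal{N}))\simeq(M,f_1^{-1}(\mathcal{N}))$), and points to an elementary alternative in the literature. Your linear interpolation $F_t=(1-t)f+tg$, the bound $\|F_t-f\|_{C^1}<\delta(f)$ ensuring $F_t\pitchfork\{0\}$ (and, for $k=1$, the orthant stratification of $\R$), and the invocation of Thom's lemma are exactly this argument. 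Your vector-field alternative is the elementary route the paper alludes to. One small remark: the sublevel statement $\{f\le0\}$ only has unambiguous meaning for $k=1$, which is how the paper uses it; your phrasing about ``coordination of the sign strata across different components'' for $k>1$ is moot for that reason.
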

By \Cref{weierstrass appro to use}, any function $f$ with bounded $C^1$--norm can be quantitatively approximated in the $C^0$--topology by a polynomial map $p$. In general, the topology of the zero set is not stable under $C^0$--perturbations, so there is no immediate relationship between the Betti numbers of $Z(f)$ and $Z(p)$. However, if $f$ is regular, the Betti numbers of its zero set enjoy a semicontinuity property under $C^0$--perturbations. Indeed, we will show in \Cref{semicontinuity}, that $b(Z(f))\le b(Z(p))$. The result follows directly from the following general statement which can be found in \cite[Theorem 179]{randomdifferentialtopology}. The theorem provides a lower semicontinuity property for the homology of a regular preimage $f^{-1}(Y)$ under $C^0$ perturbation of the map $f$. Notice that the use of $\check{H}^i$, denoting the $i$--th \v{C}ech cohomology group, is necessary since there are counterexamples for singular cohomology.

\begin{theorem}\label{general semicontinuity}
    Let $M,N$ be smooth manifolds, let $Y\subseteq N$ be a closed and cooriented smooth submanifold. Let $f\in C^1(M,N)$ such that $f\pitchfork Y$ and $Z\coloneqq f^{-1}(Y)$. Let $U,U_1$ be two open tubular neighborhoods of $Z$ such that $\overline{U}\subseteq U_1$.
    \begin{enumerate}
        \item Define the set $\mathcal{U}_{U,f}$ as the homotopy connected component containing $f$ of the set \[\mathcal{U}_U=\{g\in C^0(M,N)\mid g(M\setminus U)\subseteq N\setminus Y\}.\]
        Then $\mathcal{U}_{U,f}\subseteq C^0_S(M,N)$ is open with respect to Whitney's strong topology.
        \item If $g\in \mathcal{U}_{U,f}$ and $\widetilde{Z}=g^{-1}(Y)$, then there exist abelian groups $G_i$, for each $i\in \mathbb{N}$, such that
        \[\check{H}^i(\widetilde{Z})\simeq \check{H}^i(Z)\oplus G_i.\]
    \end{enumerate}
\end{theorem}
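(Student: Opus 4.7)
The plan is to handle the two parts of the theorem separately. Part (1) is an elementary openness argument in Whitney's strong topology, while Part (2) requires a transfer map built from the Thom class of $Y$, realized in \v{C}ech cohomology.

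For (1), observe that $\mathcal{U}_U$ is exactly the set of continuous maps $g\colon M\to N$ whose graph lies in the open set $W\coloneqq (M\times N)\setminus ((M\setminus U)\times Y)$; this set is open because $(M\setminus U)\times Y$ is closed as a product of closed sets. By the definition of Whitney's strong topology, the subset of $C^0(M,N)$ consisting of maps whose graph lies in a given open subset of $M\times N$ is open, so $\mathcal{U}_U$ is Whitney-open. Path components of open sets in a locally path-connected space are open, and $C^0_S(M,N)$ is locally path-connected via small geodesic homotopies in $N$; hence $\mathcal{U}_{U,f}$ is open as well.

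For (2), the goal is to exhibit $H^*(Z)$ as a direct summand of $\check{H}^*(\widetilde{Z})$. Because any $g\in \mathcal{U}_{U,f}$ satisfies $\widetilde{Z}\subseteq U$, and $U$ deformation-retracts onto $Z$, the inclusion $\widetilde{Z}\hookrightarrow U$ induces
\[
\phi_g\colon H^i(Z)\cong H^i(U)\longrightarrow \check{H}^i(\widetilde{Z}).
\]
To produce a left inverse, let $V$ be a tubular neighborhood of $Y$ in $N$ with Thom class $\tau_Y\in H^c(V,V\setminus Y)$, where $c=\codim_N Y$. The pullback $g^{*}\tau_Y$ defines a class in a suitable \v{C}ech-relative cohomology of $(U, U\setminus \widetilde{Z})$; cupping with it followed by integration along the tubular fibers $U\to Z$ yields a transfer $\psi_g\colon \check{H}^i(\widetilde{Z})\to H^i(Z)$. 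When $g=f$, transversality ensures that $f^{*}\tau_Y$ coincides, up to sign, with the Thom class of the normal bundle of $Z$ in $M$, so the Thom isomorphism gives $\psi_f\circ\phi_f=\mathrm{id}$. For general $g$, a homotopy from $f$ to $g$ staying inside $\mathcal{U}_U$ makes $g^{*}\tau_Y$ and $f^{*}\tau_Y$ cohomologous in $H^c(U_1, U_1\setminus K)$ for any compact $K\supseteq Z\cup\widetilde{Z}$ contained in $U$, and the identity $\psi_g\circ\phi_g=\mathrm{id}$ propagates; the slack $\overline{U}\subseteq U_1$ is precisely what makes this homotopy-invariance argument available.

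The main obstacle is the unavoidable use of \v{C}ech cohomology: since $\widetilde{Z}$ can fail to be locally contractible, $\phi_g$ only naturally lands in $\check{H}^{*}$, and the transfer $\psi_g$ must be defined as a direct limit over a cofinal system of open neighborhoods of $\widetilde{Z}$. Producing such a cofinal system on which the Thom-class constructions are compatible, and then checking naturality under the homotopy connecting $f$ and $g$, is the most delicate step—this is exactly the point where the analogous statement for singular cohomology breaks down, forcing one to work in the \v{C}ech setting from the outset.
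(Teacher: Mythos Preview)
The paper does not give its own proof of this theorem: it is quoted verbatim as \cite[Theorem~179]{randomdifferentialtopology} and used as a black box to derive \cref{semicontinuity}. There is therefore no paper proof to compare against.

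On the merits of your sketch: Part~(1) is correct and essentially complete---$\mathcal{U}_U$ is a basic graph-neighborhood in the strong topology, and path components of an open set in a locally path-connected space are open. For Part~(2) you have identified the right mechanism (pull back the Thom class of $Y$, use it to build a one-sided inverse to the restriction $H^*(U)\to\check H^*(\widetilde Z)$, and propagate $\psi_f\circ\phi_f=\mathrm{id}$ to $\psi_g\circ\phi_g=\mathrm{id}$ via the homotopy in $\mathcal{U}_U$), and you correctly flag that the \v{C}ech limit over neighborhoods of $\widetilde Z$ is where the work lies. The one place your outline is genuinely underspecified is the target of the cup-then-integrate map: $g^*\tau_Y$ lives in $H^c(U,U\setminus\widetilde Z)$, not in $H^c(U,U\setminus Z)$, so ``integration along the fibers of $U\to Z$'' does not apply directly; one needs instead to pass through $H^{i+c}(U_1,U_1\setminus\overline U)$ (this is exactly why the hypothesis $\overline U\subseteq U_1$ is there) and then invoke the Thom isomorphism for $Z$ in $U_1$. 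Once that routing is made precise, the homotopy-invariance argument you describe goes through.
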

In the particular case where $f\in C^1(M,\R^k)$, $\delta(f)>0$ and the perturbation $g$ is also smooth and regular we can use singular cohomology and we have the following.
\begin{corollary}
 \label{semicontinuity}
    Let $M$ be a smooth manifold and let $f\in C^{1}(M,\R^k)$ be such that $f\pitchfork \{0\}$.
    For every $g \in C^1(M,\R^k)$ with $g\pitchfork \{0\}$ and
    \[\|f-g\|_{C^0(M,\R^k)}<\frac{\delta(f)}{2},\]
    we have $b(Z(f))\le b(Z(g))$.
\end{corollary}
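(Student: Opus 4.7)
I plan to deduce the corollary from \Cref{general semicontinuity}, applied with $N=\R^k$ and $Y=\{0\}$, a closed cooriented submanifold of $\R^k$. The transversality hypotheses make $Z(f)$ and $Z(g)$ closed smooth submanifolds of $M$. The only nontrivial task is to produce an open tubular neighborhood $U$ of $Z(f)$ for which $g$ lies in the component $\mathcal{U}_{U,f}$, after which the theorem finishes the job.

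First I would fix some $\varepsilon$ with $\|f-g\|_{C^0}<\varepsilon<\delta(f)/2$ and choose an open tubular neighborhood $U$ of $Z(f)$ such that $\|f(x)\|<\varepsilon$ for every $x\in \overline{U}$, with $\overline{U}$ contained in a larger tubular neighborhood $U_1$. Such a $U$ exists because $\delta(f)>0$, together with \Cref{lem: distance discriminant = distance sigma}, gives the uniform bound $\sigma_k(D_xf)\geq \delta(f)$ at every $x\in Z(f)$: the normal exponential map provides a tubular neighborhood, and the uniform submersion property guarantees that $\|f\|$ grows off of $Z(f)$ at a rate bounded below, so shrinking the tubular radius yields the desired sublevel set. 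By construction, $\|f(x)\|\geq \varepsilon$ for every $x\in M\setminus U$.

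Next I would consider the straight-line homotopy $f_t\coloneqq(1-t)f+tg$. For every $x\in M\setminus U$ and every $t\in[0,1]$,
\[\|f_t(x)\|\geq \|f(x)\|-\|f-g\|_{C^0}\geq \varepsilon-\|f-g\|_{C^0}>0,\]
so $f_t\in \mathcal{U}_U$. The map $(t,x)\mapsto f_t(x)$ is jointly continuous, providing a path from $f$ to $g$ inside $\mathcal{U}_U$; hence $g\in \mathcal{U}_{U,f}$.

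Applying \Cref{general semicontinuity}(2) then yields $\check{H}^i(Z(g))\cong \check{H}^i(Z(f))\oplus G_i$ for every $i$. Since $Z(f)$ and $Z(g)$ are smooth submanifolds they are ANRs, so their \v{C}ech and singular cohomologies agree; summing ranks over $i$ gives $b(Z(f))\le b(Z(g))$. The one delicate step is the construction of $U$: the quantitative regularity supplied by $\delta(f)>0$ is precisely what ensures such a tubular neighborhood with the sublevel set property exists, even when $M$ is not compact.
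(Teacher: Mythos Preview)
Your argument has a genuine gap in the construction of $U$. You first require $\|f(x)\|<\varepsilon$ for every $x\in\overline{U}$, i.e.\ $\overline{U}\subseteq\{\|f\|<\varepsilon\}$, and then assert ``by construction'' that $\|f(x)\|\geq\varepsilon$ for every $x\in M\setminus U$, i.e.\ $\{\|f\|<\varepsilon\}\subseteq U$. These two inclusions together force $\overline{U}=U$, which is absurd. Only the second inclusion is actually needed for the straight-line homotopy, but your construction via the normal exponential map and ``shrinking the tubular radius'' gives the first: making the tube thinner controls $\|f\|$ on the tube, not off it. The growth remark (``$\|f\|$ grows off $Z(f)$ at a rate bounded below'') is only a local statement near $Z(f)$ and does not by itself yield a global lower bound on $M\setminus U$.

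The paper sidesteps this by taking $U\coloneqq f^{-1}(B_{\delta(f)/2}(0))$ directly and invoking Ehresmann's lemma: the restriction $f\colon U\to B_{\delta(f)/2}(0)$ is a submersion (because $\|f(x)\|<\delta(f)$ forces $\sigma_k(D_xf)>0$ by \Cref{lem: distance discriminant = distance sigma}) and is proper because $M$ is compact, so $U\cong B_{\delta(f)/2}(0)\times Z(f)$ is a genuine tubular neighborhood. With this choice the needed inclusion $\{\|f\|<\delta(f)/2\}\subseteq U$ is tautological, and the rest of your proof (the homotopy $f_t$ and the appeal to \Cref{general semicontinuity}) goes through verbatim.

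Finally, your closing claim that the argument works ``even when $M$ is not compact'' is false. Take $M=\R^2\setminus\{(0,0)\}$ and $f(x,y)=x$. Then $\delta(f)=1$, and $Z(f)$ is the punctured $y$-axis, so $b(Z(f))=2$. For $g(x,y)=x+c$ with $0<|c|<\tfrac12$ one has $\|f-g\|_{C^0}=|c|<\delta(f)/2$ and $g\pitchfork\{0\}$, yet $Z(g)=\{x=-c\}$ is a single line with $b(Z(g))=1<b(Z(f))$. The obstruction is exactly that no tubular neighborhood of the two-component set $Z(f)$ can contain the connected set $Z(g)$, so $g\notin\mathcal{U}_{U,f}$ for any admissible $U$. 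Compactness of $M$ (assumed throughout the paper) is essential, via properness in Ehresmann.
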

\begin{proof}
    Let $U,U_1\subseteq M$ be open tubular neighborhoods of $Z(f)$ satisfying $\overline{U}\subseteq U_1$. We need to show that $g\in \mathcal{U}_{U,f}$, i.e. $Z(g)\subseteq U\subseteq U_1$,  and that there is a homotopy $f_t$ between $f$ and $g$ such that $Z(f_t)\subseteq U$.  
    As tubular neighborhoods we consider $$U\coloneqq f^{-1}(B_{\frac{\delta(f)}{2}}(0)), \,\;\;\;U_1\coloneqq f^{-1}(B_{\delta(f)}(0)).$$ The restriction $f: U \to B_{\frac{\delta(f)}{2}}(0) $ is  proper because $M$ is compact and is a submersion by the definition of $\delta(f)$. By Ehresmann's lemma $U\simeq B_{\frac{\delta(f)}{2}}(0)\times Z(f)$, proving that $U$ is a tubular neighborhood of $Z(f)$. The same is true for $U_1$.
    
    Now  let $f_t\coloneqq (1-t)f+tg$, then:
    \[\lVert f-f_t \rVert_{C^0} = \lVert t(f-g) \rVert_{C^0} <\frac{\delta(f)}{2}.\]
    Therefore if $x\in Z(f_t)$ then $\lVert f(x)\rVert<\frac{\delta(f)}{2}$, thus the claim follows by \Cref{general semicontinuity}.
\end{proof}

\subsection{Real algebraic geometry}
This section is primarily devoted to the proof of \Cref{theorem B}, but we will also recall some general facts from real algebraic geometry that will be used later.

First, we state the Nash–Tognoli theorem, which allows us, in the proof of \Cref{theorem A}, to replace $M$ with a diffeomorphic algebraic manifold $\widetildeto{A}{M}$.
Next, we recall the classical  Petrovsky--Oleinik--Thom--Milnor bound and we observe that it is insufficient to achieve the correct order in \Cref{theorem A}. Finally, we prove \Cref{theorem B}, which generalize the Petrovsky--Oleinik--Thom--Milnor bound to the case where the ambient manifold is not Euclidean but instead a regular algebraic manifold.

Let us denote by $P_{n,d}$ the space of polynomial $p\in \R[x_1,\dots,x_n]$ of degree $\le d$. 
\subsubsection{Nash--Tognoli Theorem}
\begin{definition}[Regular variety]
    Let $q_1,\dots,q_\ell\in P_{n,d}$ and let $Z(q_1,\dots,q_\ell)\subseteq \R^n$ be an algebraic variety  of dimension $m$. We say that the variety is regular if the rank of the Jacobian matrix $\left[\frac{\partial q_j(x)}{\partial x_i}\right]$ is equal to $n-m$ for any $x\in Z(q_1,\dots,q_\ell)$.
\end{definition}
We recall the Nash--Tognoli Theorem, see \cite{bochnakRealAlgebraicGeometry1998} Theorem 14.1.10. Using the Nash--Tognoli Theorem we can replace our Riemannian manifold $(M,g)$ by an algebraic manifold.  
\begin{theorem}[Nash--Tognoli Theorem] \label{Nash tognoli theorem}
    Let $M$ be a smooth compact manifold. Then there exists a regular algebraic variety $\widetildeto{A}{M}$ which is diffeomorphic to $M$.
\end{theorem}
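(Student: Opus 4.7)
The plan is to follow the classical two-stage Nash--Tognoli strategy, exploiting tools already developed in the preceding sections of this paper for the first stage and appealing to cobordism-theoretic machinery for the second.

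\textbf{Stage 1 (Nash's theorem).} First I would apply Whitney's embedding theorem to realise $M$ as a smooth closed submanifold of $\R^N$ for some $N$ large enough. By the tubular neighborhood theorem there is an open neighborhood $U\supseteq M$ together with a smooth retraction $\pi\colon U\to M$; using $\pi$ one constructs a smooth map $\sigma\colon U\to \R^{N-m}$ with $Z(\sigma)=M$ and $D_x\sigma$ surjective for every $x\in M$. In particular $\sigma\notin \Delta$ and $\delta(\sigma)>0$. Next I would approximate $\sigma$ in the $C^1$-norm by a polynomial map $p\colon \R^N\to \R^{N-m}$ using \Cref{weierstrass appro to use} (applied on a compact neighborhood of $M$). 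If the approximation is good enough that $\|\sigma-p\|_{C^1}<\delta(\sigma)$, then by \Cref{stability lemma} the pairs $(\R^N,Z(\sigma))$ and $(\R^N,Z(p))$ are isotopic in a neighborhood of $M$. Consequently, the union of those connected components of $Z(p)$ that lie in this neighborhood forms a nonsingular real algebraic variety diffeomorphic to $M$. This already delivers Nash's original statement: $M$ is diffeomorphic to a union of connected components of a nonsingular real algebraic set.

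\textbf{Stage 2 (Tognoli's refinement).} The harder step is to arrange that the \emph{entire} algebraic variety $\widetildeto{A}{M}$ is diffeomorphic to $M$, with no spurious extra components. The classical route is through unoriented cobordism: every compact smooth manifold is cobordant to a nonsingular projective algebraic variety, as the unoriented cobordism ring is generated by real projective spaces and Dold manifolds, all of which are algebraic. One then algebraically approximates such a cobordism (using Weierstrass-type techniques on a product $M\times [0,1]$ together with a collaring argument), and removes spurious components by taking algebraic connected sums along algebraically realised handles. A key auxiliary fact is that the real Grassmannian is itself a nonsingular real algebraic variety, which allows the normal and tangent bundles of the embedding to be realised algebraically.

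The main obstacle is Stage 2. Stage 1 is a direct application of Weierstrass approximation and the stability of regular level sets, both of which fit naturally into the framework of this paper. By contrast, eliminating the spurious components in Stage 2 requires substantial input from algebraic topology (unoriented cobordism, algebraic realisation of bundles via Grassmannians, and algebraic handle attachments), and carrying it out rigorously occupies a considerable portion of the proof in \cite{bochnakRealAlgebraicGeometry1998}. For this reason the theorem is quoted here as a black box rather than reproved in detail.
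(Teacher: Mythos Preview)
The paper does not prove the Nash--Tognoli theorem; it is quoted as a known result and attributed to \cite{bochnakRealAlgebraicGeometry1998}, Theorem~14.1.10. You correctly recognise this in your final paragraph, so there is nothing to compare against.

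Your two-stage outline is a faithful summary of the classical argument in that reference. One small technical point: \Cref{weierstrass appro to use} as stated in this paper only provides $C^0$ approximation by polynomials, not $C^1$; to invoke \Cref{stability lemma} in Stage~1 you need $C^1$-closeness, so you would either appeal to the full $C^r$ simultaneous approximation theorem of Bagby--Bos--Levenberg behind \Cref{bagbi}, or apply Stone--Weierstrass to $\sigma$ together with its first partial derivatives and integrate. This does not affect the overall strategy.
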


\begin{remark}\label{rem:nash tognoli}
    Any non singular point $x$ of an algebraic variety $\widetildeto{A}{M}$ belongs to a unique irreducible component, see \cite[Proposition 3.3.10]{bochnakRealAlgebraicGeometry1998}. It follows that if $\widetildeto{A}{M}$ is connected and regular then it is also irreducible.
\end{remark}
\subsubsection{Oleinik--Petrovsky--Thom--Milnor bounds}
Next, we recall the following classical result, first due to Oleinik and Petrovsky and later refined by Thom and Milnor, which estimates the Betti numbers of an algebraic variety (not necessarily regular) in terms of the degree of the defining polynomial equations. For a proof, we refer the reader to \cite{milnorBettiNumbersReal1964}. 

\begin{theorem} \label{classical thom milnor}
    Let $p_1, \dots, p_k \in P_{n,d}$. Then:
    \[
        b(Z(p_1, \dots, p_k)) \leq d(2d - 1)^{n - 1}.
    \]
\end{theorem}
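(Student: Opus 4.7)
The plan is classical: reduce to the case of a single smooth regular hypersurface, estimate its Betti numbers by Morse theory with a generic linear function, and bound the number of critical points via B\'ezout's theorem.

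First I would form the nonnegative polynomial $P := \sum_{i=1}^{k} p_i^2$ of degree at most $2d$, so that $Z(p_1,\ldots,p_k) = Z(P)$ is precisely the minimum locus of $P$. To handle noncompactness and the fact that $\{P = 0\}$ is generally singular, I would intersect with a large closed ball $\overline{B_R}$ (the topology of $Z(P) \cap \overline{B_R}$ stabilizes for $R \gg 0$) and replace the zero level by the regular level set $H_\epsilon := P^{-1}(\epsilon) \cap B_R$ for a generic small $\epsilon > 0$. A standard tubular-neighborhood deformation-retract argument, valid because $P$ attains its minimum exactly on $Z(P)$, then gives $b(Z(P) \cap \overline{B_R}) \leq b(H_\epsilon)$.

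Next I would apply Morse theory: choosing a generic linear function $\ell(x) = \langle v, x\rangle$ so that $\ell|_{H_\epsilon}$ is Morse, the weak Morse inequalities give $b(H_\epsilon) \leq \#\mathrm{crit}(\ell|_{H_\epsilon})$. After an orthogonal change of coordinates aligning $v$ with the first axis, the critical points are exactly the real solutions of the polynomial system
\[
  P(x) = \epsilon, \qquad \partial_{x_2} P(x) = \cdots = \partial_{x_n} P(x) = 0,
\]
consisting of one equation of degree $2d$ and $n-1$ equations of degree $2d-1$. B\'ezout's theorem bounds the number of complex (and hence real) solutions by $2d(2d-1)^{n-1}$.

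The most delicate step is extracting the sharp constant $d(2d-1)^{n-1}$ rather than the naive estimate $2d(2d-1)^{n-1}$. Milnor achieves this factor-of-two improvement by a finer symmetry argument: after projectivization, the critical-point system associated with the sum-of-squares $P$ carries a natural involution pairing up complex critical points, effectively halving the B\'ezout count. Combined with the Morse inequalities above, this yields $b(Z(p_1,\ldots,p_k)) \leq d(2d-1)^{n-1}$.
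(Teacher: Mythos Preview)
The paper does not give its own proof of this statement; it simply cites Milnor's original article. Your outline tracks Milnor's strategy correctly through the B\'ezout count of $2d(2d-1)^{n-1}$, but the final paragraph, where you recover the sharp constant, is wrong. There is no ``natural involution pairing up complex critical points'' arising from the sum-of-squares structure of $P$, and projectivization plays no role in the halving. Milnor's actual mechanism is Morse theory for the \emph{manifold with boundary} $U_\epsilon := \{P \le \epsilon\}$ (of which $Z(P)$ is a deformation retract), not for the hypersurface $H_\epsilon = \partial U_\epsilon$ itself. A generic linear function $\ell$ has no interior critical points on $U_\epsilon$; at each critical point of $\ell|_{H_\epsilon}$ the region $U_\epsilon$ lies locally either on the side of increasing $\ell$ or of decreasing $\ell$, and only one of these two types attaches a handle to $U_\epsilon$ (the other type builds the complementary region). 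Thus $b(Z(P)) = b(U_\epsilon)$ is bounded by at most half of the $2d(2d-1)^{n-1}$ boundary critical points, which gives $d(2d-1)^{n-1}$.

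A related slip occurs already in your step~3: the deformation retract yields $b(Z(P)) = b(U_\epsilon)$, not the inequality $b(Z(P)) \le b(H_\epsilon)$ that you state. Comparing $b(U_\epsilon)$ with $b(H_\epsilon)$ is neither what the retract provides nor what is needed; the clean route is to bypass $b(H_\epsilon)$ entirely and bound $b(U_\epsilon)$ directly via the half-of-the-critical-points argument above.
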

Even though this estimate is not sharp, the order $ O(d^n)$ is optimal.

If instead we consider zero sets restricted to an algebraic manifold $\widetildeto{A}{M}$ of dimension $m$, in analogy with \Cref{classical thom milnor}, we would expect a bound of order $ O(d^m) $. For example, many generalizations of \Cref{classical thom milnor} in the literature estimate the Betti numbers of semialgebraic sets restricted to an ambient semialgebraic set of fixed dimension (see, for example,  \cite{basuBoundingBettiNumbers1999, basuBettiNumbersSign2004, basuNumberCellsDefined1996}).
There, the Betti numbers of semialgebraic sets are bounded in terms of the degree $d$ of the defining equations and on the number $s$ of the equations. It is shown that, if we restrict to an ambient semialgebraic set of dimension $m$ living in $\R^n$, the bound, from $O(d^ns^n)$ can be improved to $O(d^ns^m)$.

In what follows, we will show that also the dependence on the degree can be improved to $ O(d^m)$.

\begin{remark}
A recent work by Basu and Parida \cite[Theorem 7]{basu2025boundsrealizationszerononzeropatterns} obtained a similar bound using different techniques. In their approach, however, they use the degree of the variety $M$ defined as the generic number of (complex) intersection points with a plane of complementary dimension. One can bound the degree of $M$ using the degree of the defining equations, but the resulting estimate is weaker: $b(M)= O(d_0^{m(n-m)}d^m)$.
\end{remark}

\subsubsection{Morse polynomials}
The proof of \Cref{theorem B} relies on  Morse theory -- {this is standard for arguments of this type.} More precisely, we will construct a polynomial which is a Morse function for $M$ and bound the number of its critical values by its degree.
We recall that a smooth map $f\colon M\to \R$ is a Morse function if $Df\colon M\to T^*M$ is transversal to the zero section $Z\subseteq T^*M$. 

We begin by some elementary facts about Morse polynomials.
\begin{prop} 
    Let $M\subseteq \R^n$ be a smooth compact manifold. Then, for any $d\ge 1$ the map
    \begin{align*}
        F\colon M\times P_{n,d}&\to T^*M\subset M\times (\R^n)^* \\
        (x,p)&\mapsto (x,D_xp|_{T_xM})
    \end{align*}
    is a submersion.
\end{prop}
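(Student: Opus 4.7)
The plan is to verify surjectivity of the differential of $F$ at an arbitrary point $(x_0,p_0)\in M\times P_{n,d}$. Since $F$ commutes with the bundle projection $\pi\colon T^*M\to M$ (we have $\pi\circ F(x,p)=x$), I will treat the horizontal and vertical components of the target separately: the horizontal directions are handled trivially by varying $x$, and the fiber directions will be handled by varying $p$ using linear polynomials.

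First I would record that since $P_{n,d}$ is a vector space we may identify $T_{p_0}P_{n,d}\cong P_{n,d}$, and that a choice of local splitting (e.g.\ via a connection on $T^*M$) yields
\[T_{F(x_0,p_0)}T^*M\;\cong\;T_{x_0}M\oplus T_{x_0}^*M.\]
Because $\pi\circ F$ is the projection onto $M$, its derivative is already surjective, so the image of $D_{(x_0,p_0)}F$ already contains everything in the horizontal summand $T_{x_0}M$.

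Next I would compute the partial derivative in the $P_{n,d}$ direction with $x$ held fixed. For $q\in P_{n,d}=T_{p_0}P_{n,d}$,
\[\partial_p F(x_0,p_0)\cdot q \;=\; \bigl(0,\;D_{x_0}q|_{T_{x_0}M}\bigr),\]
which lies in the vertical fiber $T_{x_0}^*M$. To show every element of $T_{x_0}^*M$ is attained, fix $\xi\in T_{x_0}^*M$, extend it to a linear functional $\widetilde{\xi}\in (\R^n)^*$ (using any complement of $T_{x_0}M$), and set $q(x)\coloneqq \widetilde{\xi}(x-x_0)$. Since $d\ge 1$ this $q$ lies in $P_{n,d}$, and by construction $D_{x_0}q=\widetilde{\xi}$, hence $D_{x_0}q|_{T_{x_0}M}=\xi$. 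Combining the horizontal and vertical contributions, $D_{(x_0,p_0)}F$ surjects onto $T_{F(x_0,p_0)}T^*M$.

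There is no substantial obstacle here; the only point requiring a moment's care is checking that the target is genuinely the fiber of $T^*M$ (intrinsically $T_{x_0}^*M$) rather than $(\R^n)^*$, and that restriction $(\R^n)^*\twoheadrightarrow T_{x_0}^*M$ is surjective so that linear polynomials already suffice to sweep out the fiber. Once this is observed, the submersion property follows immediately.
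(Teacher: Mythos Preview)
Your proof is correct and follows essentially the same approach as the paper: both arguments split into base and fiber directions, use that $\pi\circ F$ is the projection to $M$ for the base, and use linear polynomials $q(x)=\langle v,x\rangle$ (since $d\ge 1$) to hit every covector in the fiber. The only cosmetic difference is that the paper factors $F=\Pi\circ\phi$ through the ambient $M\times(\R^n)^*$ and checks that each factor is a submersion, whereas you restrict to $T_{x_0}M$ immediately; the content is the same.
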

\begin{proof}
    We prove that the map
    \begin{align} \label{morse map}
          \phi\colon M\times P_{n,d}&\to M\times (\R^n)^* \nonumber \\
        x,p&\mapsto (x,D_xp)
    \end{align}
    is a submersion. Since the first component $(x,p)\mapsto x$ is a submersion for each $p\in P_{n,d}$, it is sufficient to prove that for each $x\in M$ the map $p\mapsto D_xp$ is a submersion. Since the map is linear its differential is $q\mapsto D_xq$ for $q \in T_p P_{n,d}$. Let $v\in (\R^n)^*$, choose $q\in T_p(P_{n,d})=P_{n,d}$ such that $D_xq=v$, e.g. $q(x)=\langle v,x\rangle$. This is possible if $d\ge1$. We have that $$D_p(D_xp)(q)=D_xq=v,$$
    which proves that the map (\ref{morse map}) is submersion. We notice also that the  restriction operator 
    \begin{align*}
        \Pi\colon M\times (\R^n)^*&\to T^*M \\
       x, v&\mapsto x,v|_{T_xM}
    \end{align*}
    is a submersion, because it is the identity on the first coordinate and a surjective linear function on the second coordinate. Therefore we obtain that the map $F=\Pi\circ \phi$ is a submersion.
\end{proof}
As a corollary we have the following.
\begin{corollary}    \label{morse}
    Let $M\subseteq \R^n$ be a smooth compact manifold, and  let $W\subseteq M$ be a stratified set of codimension at least $1$. Then, for $d\ge 1$, there exists a residual subset $A_W \subseteq P_{n,d}$ such that for any $r\in A_W$, the restriction $r|_M$ is a Morse function for $M$ with $\crit{r|_M}\subseteq M\setminus W$.
\end{corollary}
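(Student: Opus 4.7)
The plan is to deduce the corollary from the preceding proposition via parametric transversality. Since $F\colon M\times P_{n,d}\to T^*M$ is a submersion, it is automatically transversal to every submanifold of $T^*M$, and in particular to the zero section $Z\subseteq T^*M$. Thom's parametric transversality theorem then produces a residual set $A_0\subseteq P_{n,d}$ such that for every $p\in A_0$ the slice map $x\mapsto F(x,p)=D_xp|_{T_xM}$ is transversal to $Z$; by definition this is exactly the Morse condition for $p|_M$.

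To arrange the additional condition $\crit(r|_M)\cap W=\emptyset$, I would decompose $W=\bigsqcup_\alpha W_\alpha$ into its smooth strata. Compactness of $M$ together with local finiteness of any reasonable stratification ensures that the index set can be taken at most countable, and by hypothesis each $W_\alpha$ is a smooth manifold with $\dim W_\alpha\le m-1$. For each $\alpha$ consider the restriction
\[F_\alpha\colon W_\alpha\times P_{n,d}\longrightarrow T^*M\big|_{W_\alpha},\qquad (x,p)\mapsto (x,D_xp|_{T_xM}).\]
The argument used to prove that $F$ is a submersion only required that for each fixed $x$ the linear map $p\mapsto D_xp$ surjects onto $(\R^n)^*$, which still holds for $x\in W_\alpha$; hence $F_\alpha$ is again a submersion and in particular transversal to the zero section $Z_\alpha\subseteq T^*M|_{W_\alpha}$.

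Applying parametric transversality once more, there is a residual set $A_\alpha\subseteq P_{n,d}$ such that for $p\in A_\alpha$ the slice $W_\alpha\to T^*M|_{W_\alpha}$ is transversal to $Z_\alpha$. The decisive dimension count is that $Z_\alpha$ has codimension $m$ in $T^*M|_{W_\alpha}$, while $\dim W_\alpha\le m-1<m$; since $\dim W_\alpha$ is strictly smaller than the codimension of $Z_\alpha$, transversality forces the preimage to be empty, so $p|_M$ has no critical point on $W_\alpha$. Setting $A_W\coloneqq A_0\cap \bigcap_\alpha A_\alpha$ yields a countable intersection of residual subsets of the finite-dimensional Baire space $P_{n,d}$, which is still residual and has the required property. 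No step presents a genuine obstacle; the only bookkeeping item is ensuring countability of the stratification so that the intersection remains residual, which follows from compactness of $M$ and local finiteness of the strata.
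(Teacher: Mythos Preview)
Your proof is correct and follows essentially the same route as the paper: parametric transversality applied to the submersion $F$ gives the Morse condition, and a dimension count on each stratum of $W$ forces the critical points off $W$. The only cosmetic difference is packaging---the paper encodes the avoidance condition by requiring the section $g\colon M\to T^*M$ to be transversal to the stratified set $Z_W=\{(x,0)\mid x\in W\}\subseteq T^*M$ (whose strata have codimension $\ge m+1$ in $T^*M$), whereas you restrict the domain to each $W_\alpha$ and work inside $T^*M|_{W_\alpha}$; the two dimension counts are equivalent, and since the paper's stratifications are by definition finite your countability caveat is in fact unnecessary.
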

\begin{proof}
    Let $Z\subseteq T^*M$ be the zero section, and let $Z_W\coloneqq \{(x,0)\mid x\in W\}\subseteq T^*M$. By the Parametric transversality Theorem, see \cite{leeIntroductionSmoothManifolds2012} Theorem 6.35,  there exists a residual set of polynomials $r\in P_{n,d}$ such that $x\mapsto (x,D_xr|_{T_xM})$ is transversal to both $Z$ and $Z_W$. If  $x\mapsto(x,D_xr)$ is transversal to $Z$ then, by definition, $r$ is Morse. Now, consider a smooth stratum $W_i$ of $Z_W$. This is a smooth submanifold of $T^*M$ with codimension at least $n+1$. Consider      \begin{align*}
        g\colon M &\to T^*M \\
        x&\mapsto (x,D_xr).
    \end{align*}
    Transversality of $g$ to a submanifold $Y$  means that for each $x\in M$ such that $g(x)\in Y$, the following holds $$\im D_xg \oplus T_{g(x)}Y= T_{g(x)}T^*M.$$
    Since $\dim \im D_xg+ \dim T_{g(x)}Y\le n+n-1<2n$,  it  must be the case that $g(x)\not \in Y$ for each stratum $Y$ of $Z_W$.  Therefore $\crit{r}|_M\subseteq M\setminus W$ as desired.
\end{proof}
\subsubsection{Proof of \Cref{theorem B}}
{Everything is now ready for the proof of \cref{theorem B}. Let us recall the statement of the theorem.}

\begingroup
\def\thethm{\ref{theorem B}}
\begin{thm} 
    Let $M=Z(q_1,\dots, q_\ell)$ be a regular, irreducible compact manifold of dimension $m$, with $\deg q_i\le d_0$. For any $d\ge 1$ consider $p_1,\dots, p_k \in P_{n,d}$,  then:
    \[b(Z(p_1,\dots,p_k)\cap M)\le d_0^{n-m}((n-m)(d_0-1) + 2d-1)^{m}=O(d^m).\]
\end{thm}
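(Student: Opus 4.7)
My plan is to combine a Milnor-style smoothing of the (possibly singular) zero set with a Morse-theoretic count on a suitable algebraic \emph{double}. First I would reduce to a single equation by setting $P \coloneqq p_1^2 + \cdots + p_k^2$: then $\deg P \le 2d$ and $Z(P)\cap M = Z(p_1,\dots,p_k)\cap M$, so it suffices to bound $b(Z(P)\cap M)$.

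Because $P\ge 0$ on $M$, we have $Z(P)\cap M = \{P\le 0\}\cap M$. By Sard's theorem applied to the smooth map $P|_M$, one can pick a generic small $\epsilon>0$ such that no critical values of $P|_M$ lie in $(0,\epsilon]$; then $V_\epsilon\coloneqq\{P\le \epsilon\}\cap M$ is a compact manifold with boundary $W_\epsilon = \{P=\epsilon\}\cap M$, and the flow of $-\nabla P|_M$ retracts it onto $Z(P)\cap M$. Hence $b(Z(P)\cap M)\le b(V_\epsilon)$. To replace this manifold-with-boundary by a closed smooth algebraic variety I would introduce the double
\[ \widetilde{V}_\epsilon \coloneqq \bigl\{(x,y)\in \R^{n+1}\ \big|\ q_1(x)=\cdots=q_\ell(x)=0,\ P(x)+y^2=\epsilon\bigr\}, \]
which is a smooth compact algebraic variety of dimension $m$ for generic $\epsilon$ (smoothness along $y=0$ uses that $\epsilon$ is a regular value of $P|_M$). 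The section $x\mapsto (x,+\sqrt{\epsilon-P(x)})$ exhibits $V_\epsilon$ as a retract of $\widetilde{V}_\epsilon$, so $b(V_\epsilon)\le b(\widetilde{V}_\epsilon)$.

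Next, by \cref{morse} a generic linear form $r\in P_{n+1,1}$ restricts to a Morse function on $\widetilde{V}_\epsilon$, and the Morse inequalities give $b(\widetilde{V}_\epsilon)\le\#\crit(r|_{\widetilde{V}_\epsilon})$. The critical points are the common solutions of the defining equations of $\widetilde{V}_\epsilon$ together with the vanishing of the maximal minors of the extended Jacobian $[\nabla r,\nabla q_1,\dots,\nabla q_\ell,\nabla(P+y^2-\epsilon)]$. Taking a local complete-intersection presentation of $M$ (using $n-m$ of the $q_i$, which is possible since $M$ is regular) these minors have degree at most $(n-m)(d_0-1)+2d-1$, and a suitable intersection-theoretic/Bezout estimate on the $m$-dimensional variety $\widetilde{V}_\epsilon$ should yield
\[ \#\crit(r|_{\widetilde{V}_\epsilon}) \le d_0^{n-m}\bigl((n-m)(d_0-1)+2d-1\bigr)^m. \]

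The main obstacle I anticipate is this final count: a naive Bezout over all defining plus determinantal equations overshoots the claimed bound, and one needs to identify exactly $m$ independent determinantal conditions on $\widetilde{V}_\epsilon$. The factor $d_0^{n-m}$ should come from the complete-intersection degree of $M$, while the exponent $m$ on $(n-m)(d_0-1)+2d-1$ should come from intersecting $m$ such minors with the $m$-dimensional variety $\widetilde{V}_\epsilon$; carrying this out cleanly via a Thom--Porteous-type degeneracy-locus argument or a careful Koszul computation, rather than a crude product of degrees, is the real content of the proof.
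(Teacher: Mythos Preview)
Your approach shares the opening moves with the paper --- reduce to a single nonnegative polynomial $P=\sum p_i^2$ of degree $2d$, pass to a regular sublevel $\{P\le\epsilon\}\cap M$, and finish by a Morse/B\'ezout count --- but the doubling step introduces an extra factor that blocks the stated bound.

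The problem is that $\widetilde V_\epsilon\subset\R^{n+1}$ is cut out by $n-m$ of the $q_i$ (degree $\le d_0$) \emph{together with} the equation $P+y^2=\epsilon$ of degree $2d$. Even in the optimal set-up --- building $m$ polynomial tangent fields $X_k$ on $\widetilde V_\epsilon$ via $(n{-}m{+}2)\times(n{-}m{+}2)$ determinants and writing the critical-point system as $\{q_1=\cdots=q_{n-m}=0,\ P+y^2=\epsilon,\ X_1(r)=\cdots=X_m(r)=0\}$ in $\R^{n+1}$ --- the B\'ezout product is
\[
d_0^{\,n-m}\cdot(2d)\cdot\bigl((n-m)(d_0-1)+2d-1\bigr)^m,
\]
a factor $2d$ too large, hence only $O(d^{m+1})$. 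Your intuition that ``the factor $d_0^{n-m}$ should come from the complete-intersection degree of $M$'' is precisely where this slips: the complete-intersection degree of $\widetilde V_\epsilon$ is $d_0^{n-m}\cdot 2d$, not $d_0^{n-m}$, and no Thom--Porteous or Koszul refinement removes a factor that reflects the actual degree of the variety you are working on.

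The paper sidesteps the doubling entirely. After reaching $\{P\le\epsilon\}\cap M$ (via semialgebraic triviality; your gradient-flow argument is also fine), it replaces $f\coloneqq P-\epsilon$ by a $C^1$-close \emph{Morse polynomial} $r$ of degree $2d$, chosen via \cref{morse} so that all critical points of $r|_M$ avoid the zero set of one fixed maximal minor $\rho$ of the Jacobian of $(q_1,\dots,q_\ell)$; irreducibility of $M$ ensures $Z(\rho)\cap M$ has positive codimension, so such a choice is generic. By \cref{stability lemma} one has $\{r\le 0\}\cap M\cong\{f\le 0\}\cap M$, and the Morse inequalities give $b(Z(p_1,\dots,p_k)\cap M)\le\#\crit(r|_M)$. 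The critical-point system now lives in $\R^n$ with exactly $n$ equations: the $n-m$ selected $q_i$'s (degree $\le d_0$) and $m$ tangential derivatives $X_k(r)$ of degree $(n-m)(d_0-1)+(2d-1)$, with \emph{no} leftover equation of degree $2d$. After checking nondegeneracy (using that $r$ is Morse and that the $\nabla q_i$ span the normal space off $Z(\rho)$), B\'ezout yields the bound exactly. The moral: put the degree $2d$ into the Morse function on $M$, not into an extra defining equation of a doubled variety.
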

\addtocounter{thm}{-1}
\endgroup

\begin{proof}
    First notice that
    $Z(p_1,\dots, p_k)=\{\sum_{i=1}^k p_i^2\le0\}$. By semialgebraic triviality \cite[Proposition 9.4.4]{bochnakRealAlgebraicGeometry1998} for every $\epsilon >0 $ small enough, the inclusion $$\bigg\{\sum_{i=1}^k p_i^2\le 0\bigg\}\xhookrightarrow{\sim}\bigg\{\sum_{i=1}^k p_i^2\le \epsilon\bigg\}$$
    is a homotopy equivalence. Assume now that $\epsilon$ is a regular value of $\sum_1^k p_i^2$ or, equivalently, that $0$ is a regular value of $$f \coloneqq \sum_{i=1}^k p_i^2-\epsilon.$$  
    Let $q=(q_1,\dots,q_\ell)$ and for $x_0\in M$ consider  $\rho_{x_0}(x)$ a $(n-m)$--minor of the Jacobian matrix $Jq(x)$. Since $M$ is regular we can assume $\rho_{x_0}(x)$ to be non vanishing at $x_0$. Since $M$ is irreducible $Z(\rho_{x_0})\cap M$ has codimension at least $1$. We can apply  \Cref{morse} 
    to find $r\in P_{n,2d}$ Morse function for $M$, such that \mbox{$\crit{r}|_{M}\subseteq M\setminus Z(\rho_{x_0}) $} and: \[\|r-f\|_{C^1(M,\R)}<\delta(f).\]
   By \Cref{stability lemma}, the sublevel sets $\{f\le 0\}\cap M$ and $\{r\le 0\} \cap M$ are diffeomorphic, therefore, by the Morse inequalities \cite[Theorem 5.2]{milnorMorseTheoryAM511963}, we obtain:  $$b(Z(p_1,\dots,p_k))=b\left(\{f\le 0\}\right)\le \#\crit{r|_M}.$$    
    To simplify the notation assume that the minor $\rho_{x_0}$ is computed using the first $n-m$ rows and the first $n-m$ columns of the Jacobian matrix, i.e.:$$\rho_{x_0}(x)=\det\left[\frac{\partial q_j(x)}{\partial x_i}\right]_{\substack{i=1,\dots,n-m\\j=1,\dots,n-m}}$$ 
    If $x$ is a critical point of $r|_M$ then it satisfies the system:
    \begin{equation}\label{prop5eq1}
    \begin{cases}
      q_1(x)=\dots = q_{n-m}(x)=0 \\
      \nabla|_{T_xM} r(x)=0,
    \end{cases}
    \end{equation}
    where $\nabla|_{T_xM} r(x)=0$ denotes the gradient of $r(x)$ along a basis of $T_xM$.
    Denoting by $\nabla_{n-m}\coloneqq(\partial_1,\dots,\partial_{n-m})$, a basis of $T_xM$  for  $x\in M\setminus Z(\rho_{x_0})$ is described by:
    \[X_k(x)\coloneqq \det \left(\begin{array}{ ccc | c }
    &&\\
    \nabla_{n-m} q_1(x) &  \cdots  &\nabla_{n-m} q_{n-m}(x) & \nabla_{n-m}   \\
    &&\\
    \hline
    \partial_k q_1(x)& \cdots  &\partial_k q_{n-m}(x)& \partial_k 
    \end{array}\right),  \text{ for } k=n-m+1,\dots n\]  
    Indeed, if $x\in \R^n\setminus Z(\rho_{x_0})$, the  $\nabla q_i(x)$ form a basis of $N_xM$ for $i=1,\dots, n-m$. Since $X_k(x)$ are also linearly independent and  $\langle X_k(x), \nabla q_i(x)\rangle=0$, it follows that the $X_k(x)$ form a basis of $T_xM$.
    The system (\ref{prop5eq1}) can be rewritten as:
    \begin{equation}\label{prop5eq2}
    \begin{cases}
      q_k(x)=0 &  \text{for } k=1,\dots n-m,\\
      X_k (r)(x)=0   & \text{for } k=n-m+1,\dots n.
    \end{cases}
    \end{equation}
    To check that the system is nonsingular we compute its Jacobian using the basis of $T_x\R^n=N_xM\oplus T_xM$ given by a basis of $N_xM$ and by the $X_h(x)$'s:
\begin{equation*}       
       \left(\begin{array}{ c | c }
    \nabla_{N_xM} q_i(x) & X_h q_i(x)\\
    \hline
    \nabla_{N_xM} X_h (r)(x)& X_h X_k r(x)
    \end{array}\right)
\end{equation*}
If $x\in M \setminus Z(\rho_{x_0})$, then $\nabla q_i$ are a basis of $N_xM$, so $\det(\nabla_{N_xM} q_i)\not=0$. Since $X_h\in T_xM$, then $X_h q_i(x) =0$, because $q_i(x)=0$ for all  $x\in M$. We also have that $\det(X_h X_k r)\not=0$ because $r$ is Morse for $M$.
Applying Bezout Theorem, see \cite[Lemma 11.5.1]{bochnakRealAlgebraicGeometry1998},  the number of complex projective non singular zeroes of the polynomial systems \eqref{prop5eq2} is bounded by the product of the degrees of the polynomials, {getting, as claimed}:  $$\# \text{crit } r|_M\le d_0^{n-m} ((n-m)(d_0-1)+(2d-1))^m.$$
\end{proof}
\subsection{Finite families of maps}
In this section, we introduce the basic notions needed to define semialgebraic--type sets, as in \cref{sat}. In particular, we recall the definition of a Whitney stratified set, introduce the notion of a finite family of maps and its corresponding condition number. We conclude the section with a general version of the Thom isotopy lemma and an inequality concerning the Betti numbers of finite intersections and finite unions of closed sets, which follows from the Mayer--Vietoris sequence.
\subsubsection{Whitney stratifications}
Recall from \cite{matherNotesTopologicalStability2012} the notion of Whitney's condition (b).
Consider $X,Y$ smooth submanifold of $M$, not necessarily closed nor connected. We say that the pair $(X,Y)$ satisfies Whitney's condition (b) at $y$ if the following holds.
For any sequence $\{x_n\}\subseteq X$, $\{y_n\}\subseteq Y$ both converging to $y$ suppose that $T_{x_n}X$ converges to some $\tau \subset T_yM$ and that $x_n\not= y_n$ for any $n$, and the secants $\overline{x_ny_n}$ (in local coordinates) converge to some line $\ell \subseteq T_yM$, then $\ell \subseteq \tau$. 
\begin{definition}
    Let $M$ be a smooth manifold and let $S\subseteq M$. A \textit{stratification} $\mathcal{S}$ of $S$ is a cover of $S$ by finite pairwise disjoint smooth submanifolds of $M$ contained in $S$. The stratification satisfies the \textit{frontier condition} if for each stratum $X\in \mathcal{S}$ its frontier $\overline{X}\setminus X \cap S$ is a union of strata.
    We say $\mathcal{S}$ is a Whitney stratification if it satisfies the frontier condition and $(X,Y)$ satisfies condition (b) for any pair $(X,Y)$ of strata of $\mathcal{S}$. In this case, $S$ is called Whitney stratified, and its dimension is given by the {maximal} dimension among its strata.
\end{definition}
We remark that any subset of a Whitney stratification $\mathcal{T}\subseteq \mathcal{S}$ is itself a Whitney stratification.
Transversal intersections of Whitney stratified spaces remains Whitney stratified, with the new strata given by all  possible intersections of the original strata. Given $S_1$ and $S_1$ with stratifications $\S_1$ and $\S_2$, then we denote by $\S_1\times \S_2$ the product stratification of $S_1\times S_2$, whose strata are given by all the possible products of strata of $\S_1$ and $S_2$.

\begin{definition}
If $f\colon M\to N$ is a smooth map,  we say that 
 $f$ is transversal to the Whitney stratified space $S$ and we denote this as $f\pitchfork \mathcal{S}$ if  $f$ is transversal to each smooth stratum of $S$. In this case $f^{-1}(S)$ is stratified by the pullback stratification $$f^{-1}(\mathcal{S})\coloneqq \{f^{-1}(S_i)\mid S_i \in \mathcal{S}\}.$$
Since for each stratum $S_i$, the codimension of $S_i$ coincides with the codimension of its preimage $f^{-1}(S_i)$,  we have that $$\codim(S)=\codim f^{-1}(S).$$
\end{definition}
We now introduce the main stratification that we are going to use on $\R^s$.
\begin{definition}\label{def:orthant strat}
We denote by $\mathcal{S}_{I}$, for $I\subseteq \{1,\dots,s\}$, the stratification of $\R^s$ given by the following  strata  $$\bigg\{\sign(x_i)=\sigma\;\bigg|\; \sigma\in\{-1,0,1\}, i\in I\bigg\}.$$ 
If $I=\{1,\dots,s\}$ we call it the orthant stratification of $\R^s$ and we denote it just by $\mathcal{S}$.
\end{definition}
\begin{figure}[H]
\begin{mdframed}[userdefinedwidth=5cm,align=center]\centering
\scalebox{0.75}{\includegraphics[]{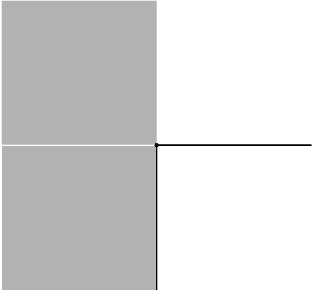}}
\end{mdframed}
\caption{Example of strata of $(\R^2,\mathcal{S})$}
\end{figure}
\subsubsection{Condition number of a finite family}The following definition is inspired by \cite{burgisserComputingHomologySemialgebraic2021}, where a similar notion is introduced for a family of polynomials (see \cite[Definition 3.7]{burgisserComputingHomologySemialgebraic2021}).
\begin{definition}\label{smooth family definition}
    A smooth family is a finite subset $\F\coloneqq\{f_1,\dots,f_s\}\subseteq C^1(M,\R)$.  
    For any $J\subseteq\{1,\dots , s\}$, we denote by $f_J\coloneqq (f_j)_{j\in J}\colon M \to \R^{|J|}$.
    We will write $f\colon M \to \R^s$ in place of $f_{\{1,\dots, s\}}$. 
    A family $\F$ is said regular if   for any \mbox{$J\subseteq \{1,\dots,s\} $}, $0$ is a regular value for $f_J$.
\end{definition}
\begin{remark} \label{equivalent description of regularity}
     Since $f_J\pitchfork 0$ if and only if $f\pitchfork \{x_j=0\mid j\in J\}$, then it follows that a family $\F$ is regular if and only if $f: M \to \R^s$ is transversal \mbox{to $(\R^s,\mathcal{S})$}. In this case we can stratify $M$ by the pullback stratification $f^{-1}(\mathcal{S})$.
\end{remark}
In analogy with \Cref{def:discriminant}, we have the following definition of discriminant for finite families. By \cref{equivalent description of regularity} we identify families $\F$ of $s$ elements by functions $f\in C^1(M,\R^s)$.
\begin{definition}\label{def:discriminant families}[Discriminant for families]
    We define the discriminant $\Delta_s$ for family of functions $f_i\colon M\to \R$, where $i=1,\dots,s$, as $$\Delta_s\coloneqq \left\{f\in C^1(M,\R^s)\,\bigg|\, f\not \pitchfork \mathcal{S}\right\}\subseteq C^1(M,\R^s).$$
    Also the distance from the discriminant $\delta(\F)$ and the condition number $k(\F)$ are defined by:
    $$\delta(\F)\coloneqq \dist_{C^1(M,\R^k)}(f,\Delta_s), \quad\quad \quad k(\F)\coloneqq \frac{\|f\|_{C^1}}{\delta(\F)}.$$
\end{definition}
Since transversality to a Whitney stratified set is an open condition (see \cite[Proposition 1.3.4]{goreskyStratifiedMorseTheory1988}), the set $\Delta_s$ is closed. It follows that a family $\F$ is regular if and only if $\delta(\F)>0$.

Again in analogy with the case of a map, we introduce $\Sigma_{s}\subseteq J^1(M,\R^s)$ consisting of the singular jets. The subscript $s$ denotes the cardinality of the families that we are considering.
\begin{definition}
    Let $\Sigma_{s,x}\subseteq J^1_x(M,\R^s)$ be the set
    \[\Sigma_{s,x}\coloneqq \left\{j^1f(x)\,\bigg|\, f\not \pitchfork \mathcal\mathcal{S} \text{ at } x\right\},\]
    We define $\Sigma_{s}\coloneqq \bigcup_{x\in M}\Sigma_{s,x}$. 
\end{definition}
The following lemma establishes a correspondence between the distance from the discriminant of the family $\F$, as defined in \cref{def:discriminant families}, and the distances from the discriminants of the functions $f_I$ as defined in \cref{def:discriminant}.
\begin{lemma}\label{proposition distance discriminant for families}
    We have that 
    \[\delta(\F)=\min_{\substack{J\subseteq \{1,\dots,s\}\\ |J|\le m+1}}\delta(f_J)\]
\end{lemma}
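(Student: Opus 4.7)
The plan is to reduce the distance from the full discriminant $\Delta_s$ to the analogous distances for the small subfamilies $f_J$, by matching perturbations explicitly on both sides. First I would \emph{unpack} the definition of $\Delta_s$ in terms of subfamily discriminants. A stratum of $(\R^s,\mathcal{S})$ is indexed by a sign vector $\sigma\in\{-1,0,1\}^s$; writing $J(\sigma)=\{j:\sigma_j=0\}$, its tangent space is the subspace of $\R^s$ where the $J$-coordinates vanish, so the transversality condition $\im D_xf+T_{f(x)}S_\sigma=\R^s$ is equivalent to $D_xf_{J(\sigma)}\colon T_xM\to\R^{|J(\sigma)|}$ being surjective. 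Summing over the possible signs on the remaining coordinates, and using that adjoining rows can raise the rank by at most one per row, yields the characterization
\[
\Delta_s=\bigcup_{\emptyset\neq J\subseteq\{1,\dots,s\}}\bigl\{\,g\in C^1(M,\R^s):g_J\in\Delta^{(|J|)}\,\bigr\},
\]
where $\Delta^{(|J|)}\subseteq C^1(M,\R^{|J|})$ is the discriminant of \cref{def:discriminant} for maps into $\R^{|J|}$ (the rank condition being automatic once $|J|>m$).

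For the direction $\delta(\F)\le\min_{|J|\le m+1}\delta(f_J)$, I would fix any non-empty $J\subseteq\{1,\dots,s\}$ and any $h\in\Delta^{(|J|)}$, and build $g\in C^1(M,\R^s)$ by
\[
g_j=\begin{cases} h_j,& j\in J,\\ f_j, & j\notin J.\end{cases}
\]
Then $g_J=h\in\Delta^{(|J|)}$, so $g\in\Delta_s$ by the characterization above. Since the non-$J$ coordinates of $g-f$ vanish identically, with the Euclidean/operator norms used in the paper
\[
\|g-f\|_{C^1(M,\R^s)}=\|h-f_J\|_{C^1(M,\R^{|J|})}.
\]
Taking the infimum over $h$ gives $\delta(\F)\le\delta(f_J)$ for every non-empty $J$, and in particular for those with $|J|\le m+1$.

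For the reverse inequality, I would start from an almost-minimizer $g\in\Delta_s$ with $\|g-f\|_{C^1}<\delta(\F)+\varepsilon$. By the characterization, there exist $J'\subseteq\{1,\dots,s\}$ and $x\in M$ with $g_{J'}(x)=0$ and $\rank D_xg_{J'}<|J'|$. If $|J'|\le m+1$ I take $J=J'$; otherwise I shrink to any $J\subseteq J'$ with $|J|=m+1$, and the crucial dimensional observation is that $\rank D_xg_J\le m<|J|$ is then automatic, so $g_J\in\Delta^{(|J|)}$ still holds. In either case coordinate projection is $1$-Lipschitz in the chosen norms, so $\|g_J-f_J\|_{C^1}\le\|g-f\|_{C^1}<\delta(\F)+\varepsilon$, whence $\delta(f_J)\le\delta(\F)+\varepsilon$; letting $\varepsilon\to 0$ concludes.

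The only step requiring real care is the first one, the characterization of $\Delta_s$: at a witness point $x$ one must identify the actual stratum of $\mathcal{S}$ containing $f(x)$ (whose index set is $\{j:f_j(x)=0\}$, possibly strictly larger than the $J$ one begins with) and verify that non-surjectivity of $D_xf_J$ propagates to non-transversality to that stratum. Once this identification is in place, the rest is bookkeeping on $C^1$-norms together with the dimensional bound $\rank D_xg_J\le m$, which is exactly what makes $|J|=m+1$ the critical threshold.
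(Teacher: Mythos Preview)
Your argument is correct and follows essentially the same route as the paper's proof: the same extension of a singular $h\in\Delta^{(|J|)}$ to $g\in\Delta_s$ by setting the non-$J$ coordinates equal to $f$, the same $1$-Lipschitz projection to pass from $\|f-g\|$ to $\|f_J-g_J\|$, and the same dimensional observation that $\rank D_xg_J\le m$ forces singularity once $|J|=m+1$. The only cosmetic differences are that the paper uses closedness of $\Delta_s$ to pick an exact minimizer (rather than your $\varepsilon$-approximation) and separates out the reduction to $|J|\le m+1$ as a final step, whereas you fold it into the reverse inequality directly.
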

\begin{proof}
    Since $\Delta_s$ is closed there exists $g\in \Delta_s$ such that $$\dist_{C^1(M,\R^k)}(f,\Delta_s)=\|f-g\|_{C^1}.$$ Consider $J\subseteq\{1,\dots,s\}$ such that $g_J\not \pitchfork 0$. Then $$\delta(\F)=\|f-g\|_{C^1}\ge\|f_J-g_J\|_{C^1}\ge \delta(f_J).$$
    For the reverse inequality, consider $h\in C^1(M,\R^{|J|})$, for some $J\subseteq\{1,\dots,s\}$, such that $\|f_J-h\|_{C^1}=\min_{J\subseteq \{1,\dots,s\}}\delta(f_J)$. Now consider $g\colon M\to \R^s$ such that $g_J=h$ and $g_i(x)=f_i(x)$ for $i\not \in J.$ Since $h\not \pitchfork 0$, we have that $g\in \Delta_s$ and therefore
    \begin{equation}\label{equationlemma2.28}\delta(\F)\ge \|f-g\|_{C^1}=\|f_J-g_J\|=\min_{J\subseteq \{1,\dots,s\}}\delta(f_J).\end{equation}
    If $|J|>m+1$, consider $J_0\subseteq J$ such that $|J_0|=m+1.$ Since $|J|>|J_0|>m$, then both $D_x f_J$ and $D_x f_{J_0}$ cannot be surjective, therefore we have
    \begin{align*}\delta(f_J)=\min_{x\in M} (\|f_J\|+\sigma_{|J|}(D_xf_J))=&\min_{x\in M} \|f_J\|\\\ge& \min_{x\in M}\|f_{J_0}\|
    \\ =&\delta(f_{J_0}).
    \end{align*}
    and this implies that the minimum is realized by a function $f_{J_0}$ such that $|J_0|\le m+1.$
\end{proof}
Similarly to the case of functions, we can compute the distance from the discriminant $\delta(\F)$ by using $\Sigma_s$ instead of $\Delta_s$:
\begin{prop}
    The following equality holds \[\delta(\F)= \inf_{x\in M}\dist(j^1f(x),\Sigma_{s,x}).\]
\end{prop}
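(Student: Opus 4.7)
My plan is to adapt the proof of \cref{equality distance discriminant} essentially unchanged, substituting the non-transversality condition ``$f\not\pitchfork\mathcal{S}$ at $x$'' for the singular-zero condition ``$f(x)=0,\ \mathrm{rk}(D_xf)<k$''. Write $\widetilde{\delta}(\F):=\inf_{x\in M}\dist(j^1f(x),\Sigma_{s,x})$. The structural fact that makes the transfer possible is the symmetry $-\Sigma_{s,x}=\Sigma_{s,x}$, which holds because if $J\subseteq\{1,\dots,s\}$ witnesses $(v,L)\in\Sigma_{s,x}$ --- meaning $v_j=0$ for $j\in J$ and $\mathrm{rk}(L_j)_{j\in J}<|J|$ --- then the same $J$ witnesses $(-v,-L)\in\Sigma_{s,x}$, exactly as $\mathrm{rk}(-L)=\mathrm{rk}(L)$ was used implicitly in the single-map case.

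The inequality $\widetilde{\delta}(\F)\le\delta(\F)$ is immediate: any $g\in\Delta_s$ fails transversality at some $x\in M$, so $j^1g(x)\in\Sigma_{s,x}$ and $\|f-g\|_{C^1}\ge\|j^1f(x)-j^1g(x)\|\ge\dist(j^1f(x),\Sigma_{s,x})\ge\widetilde{\delta}(\F)$. For the reverse inequality I would fix $x_0\in M$, introduce $\Delta_{s,x_0}:=\{g\in C^1(M,\R^s)\mid j^1g(x_0)\in\Sigma_{s,x_0}\}$ so that $\Delta_s=\bigcup_{x_0\in M}\Delta_{s,x_0}$, and carry out the substitution $h=f-g$, invoking the symmetry to obtain
\[\inf_{g\in\Delta_{s,x_0}}\|f-g\|_{C^1}=\inf_{\eta\in\Sigma_{s,x_0}+j^1f(x_0)}\ \inf_{j^1h(x_0)=\eta}\|h\|_{C^1}.\]
Working in a chart around $x_0$, \cref{gaussian} produces, for every $\epsilon>0$, a function $h$ with $j^1h(x_0)=\eta$ and $\|h\|_{C^1}\le\|\eta\|+o(\epsilon)$; this is a purely local construction depending only on the target jet, so it is imported verbatim from the proof of \cref{equality distance discriminant}. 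Taking the outer infimum gives $\dist(0,\Sigma_{s,x_0}+j^1f(x_0))=\dist(j^1f(x_0),\Sigma_{s,x_0})$ by symmetry, and infimising over $x_0\in M$ with $\epsilon\to 0$ yields $\delta(\F)\le\widetilde{\delta}(\F)$.

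Since all the analytic content (the gaussian-damped bump) is reused verbatim, I expect no real obstacle beyond two bookkeeping points: first, checking the symmetry $-\Sigma_{s,x}=\Sigma_{s,x}$ that converts $j^1f(x_0)-\Sigma_{s,x_0}$ appearing after the substitution into $\Sigma_{s,x_0}+j^1f(x_0)$; and second, matching the definitions of $\Delta_s$ and $\Sigma_s$ pointwise via the equivalence ``$g\not\pitchfork\mathcal{S}$ at $x_0$'' $\Longleftrightarrow$ ``$j^1g(x_0)\in\Sigma_{s,x_0}$'', which is immediate from the definition of $\Sigma_{s,x}$.
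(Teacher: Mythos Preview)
Your proof is correct. It differs from the paper's in that you rerun the gaussian-bump construction of \cref{equality distance discriminant} directly for $\Sigma_{s,x}$, whereas the paper instead reduces the statement to the already-established single-map case: it invokes \cref{proposition distance discriminant for families} to write $\delta(\F)=\min_J\delta(f_J)$, applies \cref{equality distance discriminant} to each $f_J$, and then checks that $\dist(j^1f_J(x),\hat{\Sigma}_{J,x})=\dist(j^1f(x),\Sigma_{J,x})$ by padding any $g_J$ realizing the left-hand distance with $g_i=f_i$ for $i\notin J$. Your route is more self-contained and avoids both the combinatorial lemma and the padding step; the paper's is more modular and avoids repeating the local construction. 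As a minor aside, the symmetry $-\Sigma_{s,x}=\Sigma_{s,x}$ that you flag is in fact unnecessary: the substitution $h=f-g$ gives $j^1h(x_0)\in j^1f(x_0)-\Sigma_{s,x_0}$ directly, and $\inf_{\eta\in j^1f(x_0)-\Sigma_{s,x_0}}\|\eta\|=\dist(j^1f(x_0),\Sigma_{s,x_0})$ without any sign change, so your two invocations of symmetry cancel (the same harmless redundancy is present in the paper's own proof of \cref{equality distance discriminant}).
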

\begin{proof}
    For $J\subseteq\{1,\dots,s\}$, denote by $\Sigma_{J,x}\subseteq J^1_x(M,\R^s)$ and by $\hat{\Sigma}_{J,x}\subseteq J^1_x(M,\R^{|J|})$ the sets 
    \[\Sigma_{J,x}=\left\{j^1f(x)\,\bigg|\, f \not \pitchfork \{x_j=0\mid j\in J\}\text{ at }x\right\} , \;\hat{\Sigma}_{J,x}=\left\{j^1f_J(x)\,\bigg|\, f_J \not \pitchfork \{0\}\text{ at }x\right\}. \]
    By \cref{proposition distance discriminant for families} and \cref{equality distance discriminant} we have
    \begin{align}
        \delta(\F)=&\min_{J\subseteq \{1,\dots,s\}}\delta(f_J)\nonumber\\
        =&\min_{J\subset\{1,\dots,s\}}\left(\min_{x\in M}\dist(j^1f_J(x),\hat{\Sigma}_{J,x})\right)\label{eqprop:2.29}.
    \end{align}
    We claim that \begin{equation}\label{eqprop:2.29 2}\dist(j^1f_J(x),\hat{\Sigma}_{J,x})=\dist(j^1f(x),\Sigma_{J,x}).\end{equation}
    Clearly $$\dist(j^1f_J(x),\hat{\Sigma}_{J,x})\le\dist(j^1f(x),\Sigma_{J,x})$$ because if $j^1g_J(x)\in \hat{\Sigma}_{J,x}$ then also $j^1g(x)\in \Sigma_{J,x}$ and $\|j^1f_J-j^1g_J\|\le \|j^1f-j^1g\|$.
    Conversely let $g_J\in C^1(M,\R^{|J|})$ such that $j^1g_J\in \hat{\Sigma}_{J,x}$ realizes the distance $\dist(j^1f_J(x),\hat{\Sigma}_{J,x})$, then define $g\in C^1(M,\R^s)$ by $g_i=g_j$ for $i\in J$ and $g_i=f_i$ otherwise. We have that \[\dist(j^1f(x),\Sigma_{J,x})\le \|j^1f-j^1g\|=\|j^1f_J-j^1g_J\|=\dist(j^1f_J(x),\hat{\Sigma}_{J,x}).\]
    Substituting \eqref{eqprop:2.29 2} in \eqref{eqprop:2.29} we obtain
    \[\delta(\F)=\min_{J\subset\{1,\dots,s\}}\left(\min_{x\in M}\dist(j^1f(x),{\Sigma}_{J,x})\right)\]
    Since $j^1g\in \Sigma_{s,x}$ if and only if $j^1g\in \Sigma_{J,x}$ for some $J\subseteq\{1,\dots,s\}$, we obtain the claim.
\end{proof}
\subsubsection{Thom isotopy Lemma}
In the sequel, we will often perturb the functions of a family $\F$. The main tool we will use to deal with these perturbations is Thom's First Isotopy Lemma. One of its consequences is that the stratified sets described on a regular family $\F$ are stable under small perturbations of the family.

The following result can be found in \cite[Chapter 1.5]{goreskyStratifiedMorseTheory1988}. 
\begin{theorem}[Thom Isotopy Lemma] \label{Thom isotopy general}
    Let $M$ be a smooth manifold and let $\mathcal{M}$ be a Whitney stratification of $M$. Consider $f\colon M\to \R^s$ and assume that $f$ is a proper and that it is a  submersion when restricted to each stratum $M_i\in \mathcal{M}$. For any $y \in \R^s$, consider $f^{-1}(y)$, which is a smooth submanifold stratified by $\mathcal{M}\cap f^{-1}(y)$. Then there is a stratum preserving homeomorphism $\phi$ which is smooth on each stratum and makes the following diagram commute:
   \[\begin{tikzcd}
	{(M,\mathcal{M})} & {\R^s\times\left(f^{-1}(y),f^{-1}(y)\cap \mathcal{M}\right) } \\
	\R^s
	\arrow["\phi", from=1-1, to=1-2]
	\arrow["f"', from=1-1, to=2-1]
	\arrow["{\pi_1}", from=1-2, to=2-1]
\end{tikzcd}\]
   
\end{theorem}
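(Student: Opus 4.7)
The plan is to prove the theorem by reducing the trivialization problem to the construction of a suitable stratified flow on $M$ that covers translations on $\R^s$, in the style of Mather's original argument. Since $\R^s$ is contractible, it suffices to produce, for each constant coordinate vector field $\partial/\partial y_i$ on $\R^s$, a stratified vector field $\xi_i$ on $M$ with $df(\xi_i) = \partial/\partial y_i \circ f$ whose time-$t$ flow $\Phi_i^t$ is a globally defined stratum-preserving homeomorphism. Composing the flows in the $s$ coordinate directions and restricting to $f^{-1}(y)$ produces the desired map $\phi$; the commutativity of the diagram is immediate from $df(\xi_i) = \partial/\partial y_i$, and smoothness on each stratum follows from the smoothness of the $\xi_i$ restricted to strata.

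The bulk of the work is therefore the construction of these vector fields and the verification that their flows are continuous across strata. I would first equip $M$ with a system of control data in the sense of Mather: for each stratum $N$ of $\mathcal{M}$, a tubular neighborhood $T_N$, a smooth retraction $\pi_N\colon T_N \to N$, and a tubular function $\rho_N\colon T_N\to[0,\infty)$ vanishing exactly on $N$, subject to the compatibility conditions $\pi_N\circ \pi_{N'} = \pi_N$ and $\rho_N\circ \pi_{N'} = \rho_N$ on overlaps with higher strata $N'$. The existence of such a system on any Whitney stratified subset of a manifold is the standard (nontrivial) consequence of Whitney's condition (b); I would quote this rather than rebuild it, since it is the hard kernel of the theory.

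Given control data, I would then construct a \emph{controlled} lift $\xi$ of a given smooth vector field $\eta$ on $\R^s$ by induction on the depth of the strata. On each stratum $N$, since $f|_N$ is a submersion, the local existence of smooth lifts $\xi|_N$ with $df(\xi|_N)=\eta\circ f$ is elementary. The inductive step extends the lift from a union of strata to a neighborhood of a new stratum using the tubular projection $\pi_N$: one defines $\xi$ on $T_N$ so that it is $\pi_N$-related to the already-constructed $\xi|_N$ and preserves the level sets of $\rho_N$. Partitions of unity subordinate to the tubular neighborhoods, combined with the compatibility conditions of the control data, glue these local choices into a global stratified controlled vector field. Whitney (b) enters precisely at this step to guarantee that the lift extended from $N$ into $T_N$ is continuous at points of $N$.

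Finally I would integrate. Each controlled vector field $\xi_i$ is tangent to every stratum, so its flow preserves the stratification; continuity of the flow across strata is the main content of Thom's lemma and follows from the control condition $d\pi_N(\xi_i) = \xi_i|_N\circ \pi_N$ (see Mather's lemma on the continuity of flows of controlled vector fields). Properness of $f$ forces trajectories to stay in a compact set over any bounded interval in $\R^s$, so the flow is defined for all time, yielding a stratified homeomorphism $\Phi_i^t$ for every $t$. Composing, and using the flow to identify $f^{-1}(\R^s)$ with $\R^s \times f^{-1}(y)$, delivers $\phi$. The main obstacle throughout is the step-by-step control across strata: without Whitney (b) one cannot guarantee that the piecewise-smooth vector fields assemble into a continuous flow, and this is exactly where the hypothesis on $\mathcal{M}$ is used.
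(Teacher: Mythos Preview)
The paper does not prove this theorem at all: it is quoted as a black box from \cite[Chapter 1.5]{goreskyStratifiedMorseTheory1988}, so there is no ``paper's own proof'' to compare against. Your outline is the classical Mather argument (control data, controlled lifts of the coordinate vector fields on $\R^s$, integration, properness for completeness of the flow), and as a sketch it is correct and is exactly the proof underlying the cited reference. The one place where you are glossing over real work is the existence of control data compatible with $f$: in Mather's setup one needs the tubular projections $\pi_N$ to additionally satisfy $f\circ\pi_N = f$ on $T_N$ (so that the controlled lift is genuinely an $f$-lift near lower strata), and this compatibility is where the hypothesis that $f|_N$ is a submersion is used a second time, not only to lift $\eta$ on each stratum. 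Apart from that omission, your plan matches the standard proof.
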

A consequence of the Isotopy Lemma is given in the following proposition. It describes the stratification $f^{-1}(\mathcal{S})$ given by a regular family $\F$ when  restricted to $ f_I^{-1}(B_\delta)$, where $B_\delta=\{y\mid\|y\|<\delta \} \subseteq \R^{|I|}$.
\begin{prop} \label{prop: product stratification}
    Let $\F=\{f_1,\dots,f_s\}$ be a smooth family with $\delta(\F)>\delta$. Then, for each $I\subseteq\{1,\dots,s\}$, there exists a stratum preserving  homeomorphism $$\phi_I\colon f_I^{-1}(B_\delta)\to B_\delta\times f_I^{-1}(0),$$
    between the stratification $f^{-1}(\mathcal{S})=f^{-1}(\S_I\times \S_{\{1,\dots,s\}\setminus I}$ of $f_I^{-1}(B_\delta)$  and the stratification $\mathcal{S}_I\times f^{-1}(\mathcal{S}_{\{1,\dots,s\}\setminus I})$ of $B_\delta\times f_I^{-1}(0)$, where $\S_I$ denotes the orthant stratification of $B_\delta\subseteq \R^{|I|}$. Moreover, this homeomorphism satisfies $$\pi_1\circ\phi_I=f_I.$$ 
\end{prop}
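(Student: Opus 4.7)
The plan is to apply Thom's First Isotopy Lemma (\cref{Thom isotopy general}) to the map $f_I\colon f_I^{-1}(B_\delta)\to B_\delta$, equipped with the \emph{coarser} Whitney stratification $f^{-1}(\mathcal{S}_{\{1,\dots,s\}\setminus I})$ on the source, which only records the signs of the $f_j$ for $j\notin I$. Once Thom's lemma yields a trivialization respecting this coarser stratification, the refinement to $f^{-1}(\mathcal{S})$ will follow for free from the identity $\pi_1\circ\phi_I = f_I$.

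First, I would check that $f^{-1}(\mathcal{S}_{\{1,\dots,s\}\setminus I})$ is a Whitney stratification of $f_I^{-1}(B_\delta)$. As in \cref{equivalent description of regularity}, this reduces to the transversality $f\pitchfork \mathcal{S}_{\{1,\dots,s\}\setminus I}$, which is immediate from the regularity of $\F$ (since transversality to the finer $\mathcal{S}$ implies transversality to the coarser $\mathcal{S}_{\{1,\dots,s\}\setminus I}$).

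The key step, and the main technical obstacle, is to check that $f_I$ is a submersion on every stratum of $f^{-1}(\mathcal{S}_{\{1,\dots,s\}\setminus I})$ inside $f_I^{-1}(B_\delta)$. Setting $J_{I^c}(x) \coloneqq \{j\in\{1,\dots,s\}\setminus I : f_j(x)=0\}$, the tangent space at $x$ is $\ker D_xf_{J_{I^c}(x)}$, and the submersion condition is equivalent to the surjectivity of $D_xf_{J_{I^c}(x)\cup I}\colon T_xM\to \R^{|J_{I^c}(x)\cup I|}$. This is where the hypothesis $\delta(\F)>\delta$ is essential: \cref{proposition distance discriminant for families} gives $\delta(f_{J_{I^c}(x)\cup I})\ge \delta(\F) > \delta$, while, since $f_{J_{I^c}(x)}(x)=0$ by definition,
\[
\|f_{J_{I^c}(x)\cup I}(x)\| \;=\; \|f_I(x)\| \;<\; \delta.
\]
Together with \cref{equality distance discriminant} and \cref{lem: distance discriminant = distance sigma}, this forces $\sigma_{|J_{I^c}(x)\cup I|}\bigl(D_xf_{J_{I^c}(x)\cup I}\bigr) > \delta - \|f_I(x)\| > 0$, ruling out both $|J_{I^c}(x)\cup I|>m$ and any rank drop, and yielding the desired surjectivity. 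Properness of $f_I$ follows from the compactness of $M$.

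\cref{Thom isotopy general} then produces the desired homeomorphism $\phi_I\colon f_I^{-1}(B_\delta)\to B_\delta\times f_I^{-1}(0)$ with $\pi_1\circ\phi_I = f_I$, stratum-preserving between the coarser stratifications. To upgrade to the full product stratification on the target, for each sign pattern $\sigma$ on $\{1,\dots,s\}$ I would decompose the associated stratum of $f^{-1}(\mathcal{S})$ as $A\cap f_I^{-1}(R)$, where $A$ is the associated stratum of $f^{-1}(\mathcal{S}_{\{1,\dots,s\}\setminus I})$ and $R$ is the associated stratum of $\mathcal{S}_I$. By Thom, $\phi_I(A) = B_\delta\times(A\cap f_I^{-1}(0))$; by $\pi_1\circ\phi_I = f_I$, $\phi_I(f_I^{-1}(R)) = R\times f_I^{-1}(0)$. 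Intersecting these two gives $R\times(A\cap f_I^{-1}(0))$, which is exactly the stratum of $\mathcal{S}_I\times f^{-1}(\mathcal{S}_{\{1,\dots,s\}\setminus I})$ indexed by $\sigma$, concluding the proof.
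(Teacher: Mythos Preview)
Your proposal is correct and follows essentially the same approach as the paper: apply \cref{Thom isotopy general} to $f_I$ with the coarser stratification $f^{-1}(\mathcal{S}_{\{1,\dots,s\}\setminus I})$, verify the stratumwise submersion condition via $\delta(f_{J_{I^c}(x)\cup I})\ge\delta(\F)>\delta>\|f_{J_{I^c}(x)\cup I}(x)\|$, and then refine to $f^{-1}(\mathcal{S})$ using $\pi_1\circ\phi_I=f_I$. The only cosmetic difference is that the paper establishes surjectivity of $D_xf_I$, $D_xf_J$, $D_xf_{I\cup J}$ separately and invokes rank--nullity, whereas you argue directly that $\sigma_{|I\cup J|}(D_xf_{I\cup J})>0$ and deduce the restricted submersion from that; the two arguments are equivalent.
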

\begin{proof}
If $f_I^{-1}(B_\delta)=\emptyset$ there is nothing to prove. Otherwise, consider $f_I^{-1}(B_\delta)$ stratified by $f^{-1}(\mathcal{S}_{\{1,\dots,s \}\setminus I})$. We claim that the map 
\[f_I|_{f_I^{-1}(B_\delta)}\colon f_I^{-1}(B_\delta)\to B_\delta,\]
is proper and  a submersion on stratum. Properness follows from the compactness of $M$,since  $f_I\colon M \to \R^{|I|}$ is continuous. Let $C\subseteq f_I^{-1}(B_\delta)$ be a non-empty stratum of $f^{-1}(\mathcal{S}_{\{1,\dots,s \}\setminus I})$, and let $x\in C$. Define $$J\coloneqq\bigg\{j\in \{1,\dots, s\}\setminus I\;\bigg |\; f_j(x)=0 \bigg\}.$$
Note that $|J|$ equals the codimension of $C$ in $f_I^{-1}(B_\delta)$. We need to show that  the differential
\[D_x f_I|_{T_xC}=D_xf_I|_{\ker D_xf_J}\] is surjective. Since $\delta(\F)>\delta,$  by \cref{proposition distance discriminant for families}, the values $\delta(f_I)$, $\delta(f_J)$ and $\delta(f_{I\cup J})$ are all greater than $\delta.$ Moreover, since $\|f_I(x)\|,$ $\|f_J(x)\|$ and $\|f_{I\cup J}(x)\|$ are each less than $\delta$, it follows from \cref{lem: distance discriminant = distance sigma} and \cref{equality distance discriminant} that $D_xf_I$, $D_x f_J$ and $D_x f_{I\cup J}$ are all surjective. Using  the rank--nullity theorem, we have \[\dim\ker( D_xf_I|_{\ker D_xf_J})+ \dim \im(D_xf_I|_{\ker D_xf_J})=\dim \ker D_xf_J. \]
Now $\dim\ker( D_xf_I|_{\ker D_xf_J})=\dim \ker D_x f_{I\cup J}=m-|I|-|J|$ and $\dim \ker D_xf_J=m-|J|$, hence $$\dim \im(D_xf_I|_{\ker D_xf_J})=|I|,$$
proving surjectivity.

By \cref{Thom isotopy general}, we obtain that there exists a stratum preserving homeomorphism \begin{equation}\label{eq: thom iso proposition}\phi_I\colon  f_I^{-1}(B_\delta)\to  B_\delta\times f_I^{-1}(0)\end{equation} where $f_I^{-1}(0)$ and $f_I^{-1}(B_\delta)$ are stratified by $f^{-1}(\mathcal{S}_{\{1,\dots,s \}\setminus I})$. 

Now we claim that, if we additionally stratify $B_\delta$ using the orthant stratification $\S_I$, then $\phi_I$ becomes stratum preserving for the finer stratification $f^{-1}(\mathcal{S})$ of  $f_I^{-1}(B_\delta)$.
Observe that the orthant stratification $\mathcal{S}$ of $\R^s$ is the transverse intersection of $\mathcal{S}_I$ and $\mathcal{S}_{\{1,\dots,s\}\setminus I}$, i.e. \begin{equation}\label{eq:product stratification}\mathcal{S}=\mathcal{S}_I\cap\mathcal{S}_{\{1,\dots,s\}\setminus I}.\end{equation} 
Since $f$ is transverse to $\mathcal{S}$, taking  preimages in \eqref{eq:product stratification}, we obtain 
\[f^{-1}(\mathcal{S})=f^{-1}(\mathcal{S}_I)\cap f^{-1}(\mathcal{S}_{\{1,\dots,s\}\setminus I}).\]
It remains to show that $\phi_I$ in \eqref{eq: thom iso proposition} is stratum preserving with respect to the  stratification $f^{-1}(\mathcal{S}_I)$, i.e. that it maps strata of  $(f^{-1}_I(B_\delta),f^{-1}(\mathcal{S}_I))$ to strata of $(B_\delta\times f^{-1}(0),\S_I\times f^{-1}(0))$. 
Let $f^{-1}(T)$ be a stratum of $f^{-1}(\mathcal{S}_I)$, then by \cref{def:orthant strat}, $$T=\bigg\{\sign x_i =\sigma \;\bigg|\;\sigma\in \{-1,0,1\}, i\in I\bigg\}.$$ Thus the map $$f_I\colon (f^{-1}(B_\delta),f^{-1}(\S_I))\to( B_\delta,\S_I)$$ is stratum preserving. Since $\pi_1\circ\phi_I=f_I$, it follows that
\[\phi_I\colon (f^{-1}(B_\delta),f^{-1}(\S_I))\to( B_\delta,\S_I)\times f^{-1}(0) \]
is also stratum preserving. 
\end{proof}
Sometimes it is important to perturb each function in a family $\F$ by a constant $\delta>0$. The next remark addresses this case.
\begin{remark} \label{rem: product stratification}
    If $\delta<\sqrt{m+1}\cdot\delta(\F)$, then $f_I^{-1}\big((-\delta,\delta)^{|I|}\big)\subseteq f_I^{-1}(B_{\delta(\F)})$ for each $I\subseteq\{1,\dots,s\}$. Indeed, if $|I|\le m+1$, then $(-\delta,\delta)^{|I|}\subseteq B_{\delta(\F)}\subseteq \R^{|I|}$, while if $|I|>m+1$ consider $I_0\subset I$, $|I_0|=m+1$, then
    \[f_I^{-1}\big((-\delta,\delta)^{|I|}\big)\subseteq f_{I_0}^{-1}\big((-\delta,\delta)^{|I_0|}\big)\subseteq f_{I_0}^{-1}(B_{\delta(\F)}).\]
    Since $f_{I_0}$ cannot be a submersion because (the dimension of the codomain is bigger than the dimension of the domain), then both $f_{I_0}^{-1}(B_{\delta(\F)}) $ and $f_{I}^{-1}(B_{\delta(\F)})$ are empty.
    This implies that if we consider any stratifications $\S'_{\{1,\dots,s\}\setminus I}$ of $\R^{s-|I|}$ given by the transversal intersections of the hypersurfaces \[\bigg\{x_j=\lambda_{j,k}\;\bigg|\;j\in \{1,\dots,s\}\setminus I, \bigg\} ,\] for some constants $|\lambda_{j,k}|\le \delta$, then $f_I\colon f_I^{-1}((-\delta,\delta)^{|I|})\to (-\delta,\delta)^{|I|}  $
    is a proper submersion restricted to each stratum of $f^{-1}(\S'_{\{1,\dots,s\}\setminus I})$.
    Therefore, by the proof of \cref{prop: product stratification}, we obtain that there exists a stratum preserving homeomorphism 
    \[\phi_I\colon f_I^{-1}\big((-\delta,\delta)^{|I|}\big)\to (-\delta,\delta)^{|I|}\times f_I^{-1}(0),\]
    between the stratifications $f^{-1}( \S_I\times \S'_{\{1,\dots,s\}\setminus I})$ and $\S_I\times f^{-1}(\S'_{\{1,\dots,s\}\setminus I})$.
\end{remark}
The following theorem is also sometimes  referred to as the Thom isotopy Lemma, even though the two results are different.  The theorem states that, given a family of maps all transversal to a stratified set $\mathcal{N}$, the preimages are all stratified sets that are diffeomorphic, stratum by stratum. Notice that while the previous theorem guarantees the existence of a stratumwise homeomorphism, this theorem ensures a stratumwise diffeomorphism.  
It can be found in \cite[Theorem 2.D.2]{thomEnsemblesMorphismesStratifies1969}. 
\begin{theorem}
    
 \label{thom isotopy particular} 
    Let $M,N$ be smooth manifolds and consider $\mathcal{N}$ a Whitney stratification of $N$. Let  $f_t\colon M\to N$ and  assume that $f_t\pitchfork \mathcal{N}$ for each $t\in[0,1]$. Then:
    \[(M,f_0^{-1}(\mathcal{N}))\simeq (M,f_1^{-1}(\mathcal{N})).\]
    in particular if $W\subseteq N$ is a smooth stratum in $\mathcal{N}$, then $f_0^{-1}(W)$ and $f_0^{-1}(W)$ are ambient isotopic submanifold of $M$.
\end{theorem}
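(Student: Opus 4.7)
The idea is to bundle the family $\{f_t\}_{t\in[0,1]}$ into a single map and apply the First Thom Isotopy Lemma (\cref{Thom isotopy general}). Set
\[
F\colon M\times[0,1]\to N,\qquad F(x,t)\coloneqq f_t(x),
\]
and let $\pi\colon M\times[0,1]\to[0,1]$ be the projection onto the second factor. I would show that $F\pitchfork \mathcal{N}$, so that the pullback $F^{-1}(\mathcal{N})$ is a Whitney stratification of $M\times[0,1]$, and that $\pi$ is a proper submersion when restricted to each stratum. Feeding this data into \cref{Thom isotopy general} applied to $\pi$ on $(M\times[0,1],F^{-1}(\mathcal{N}))$ then produces the required isotopy.

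\textbf{Transversality and submersion on strata.} Both verifications are direct differential-geometric computations. Since $D_{(x,t)}F(v,0)=D_xf_t(v)$, the hypothesis $f_t\pitchfork Y$ for every stratum $Y\in\mathcal{N}$ and every $t$ immediately gives $\im D_{(x,t)}F+T_{F(x,t)}Y=T_{F(x,t)}N$, so $F\pitchfork\mathcal{N}$; as Whitney's condition (b) is preserved under transverse pullback, $F^{-1}(\mathcal{N})$ is a Whitney stratification of $M\times[0,1]$. For the submersion property, a tangent vector at $(x,t)\in F^{-1}(Y)$ is a pair $(v,\tau)$ with $D_xf_t(v)+\tau\,\partial_tf_t(x)\in T_{F(x,t)}Y$; for any prescribed $\tau\in\R$, transversality of $f_t$ produces a $v$ satisfying this condition, so $D_{(x,t)}\pi$ is surjective on $T_{(x,t)}F^{-1}(Y)$. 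Properness of $\pi$ along each stratum is the standing assumption of \cref{Thom isotopy general} and is automatic when $M$ is compact, as in every application of this paper.

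\textbf{Extraction of the isotopy and main obstacle.} Applying \cref{Thom isotopy general} yields a stratum-preserving homeomorphism $\Phi\colon M\times[0,1]\to[0,1]\times M$ with $\pi_1\circ\Phi=\pi$, smooth on each stratum. Writing $\Phi(x,t)=(t,\Phi_t(x))$ gives a continuous family $\Phi_t\colon(M,f_t^{-1}(\mathcal{N}))\to(M,f_0^{-1}(\mathcal{N}))$ of stratum-preserving homeomorphisms with $\Phi_0=\mathrm{id}$, and the family $H_t\coloneqq \Phi_t^{-1}$ then realises the isotopy of pairs $(M,f_0^{-1}(\mathcal{N}))\simeq(M,f_1^{-1}(\mathcal{N}))$. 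When $Y=W$ is a single smooth stratum, $\Phi$ is smooth along $F^{-1}(W)$, so the restriction of $H_t$ gives an ambient isotopy carrying $f_0^{-1}(W)$ onto $f_1^{-1}(W)$, yielding the \emph{in particular} assertion. The only genuinely hard work---constructing a controlled lift of $\partial_t$ to a vector field tangent to every stratum and integrating it to a continuous flow across strata---is precisely the content of \cref{Thom isotopy general}, where Whitney's condition (b) governs how the lifts from higher-dimensional strata match those of the adjacent lower-dimensional ones; given that lemma as a black box, the proof reduces to the two linear-algebra checks on $D_{(x,t)}F$ described above.
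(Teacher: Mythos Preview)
The paper does not supply a proof of this theorem; it simply cites it from Thom's original work \cite{thomEnsemblesMorphismesStratifies1969}. Your derivation via \cref{Thom isotopy general} is the standard way to obtain the parametric statement from the fibration statement, and the two linear-algebra checks you isolate (transversality of $F$ to $\mathcal{N}$, and surjectivity of $D\pi$ restricted to each stratum of $F^{-1}(\mathcal{N})$) are carried out correctly.

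The one point that is not fully settled is properness: $\pi\colon M\times[0,1]\to[0,1]$ is proper if and only if $M$ is compact, so as written your argument establishes the theorem only under that additional hypothesis. You acknowledge this and correctly observe that compactness holds in every application made in the paper (\cref{stability lemma}, \cref{X_h bound}), so nothing is lost for the present purposes; but it is worth flagging that the theorem as stated carries no such assumption. A second, minor remark: the trivialisation produced by \cref{Thom isotopy general} is a homeomorphism of $M\times[0,1]$ that is smooth only along each stratum, so the ambient isotopy $H_t$ you extract is a priori only through homeomorphisms of $M$ (smooth along $f_t^{-1}(W)$). This is exactly the ``stratumwise diffeomorphism'' the paper advertises just before the statement, and it suffices for the applications, which use only homotopy equivalence of the preimages.
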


\subsubsection{Mayer Vietoris inequalities}
\label{section mayer vietoris} The strategy to prove \Cref{theorem C}  is to reduce to the simple case of a single function. To achieve this, it is necessary to compute the Betti numbers of unions and intersections of these simple sets. From the Mayer–-Vietoris exact sequence, see \cite{spanierAlgebraicTopology1981} chapter 4 section 6, we have that for any two closed set $C_1,C_2\subseteq M$ we have
\begin{equation}\label{MV union}
        b_i(C_1\cup C_2)\le b_i(C_1)+b_i(C_2)+b_{i-1}(C_1\cap C_2),
    \end{equation}
and
\begin{equation*}\label{MV intersection}
    b_i(C_1\cap C_2)\le b_i(C_1)+b_i(C_2)+b_{i+1}(C_1\cup C_2).
    \end{equation*}
and also 
\begin{equation}\label{MV parts}
    b_i(C_1)+b_i(C_2)\le b_i(C_1\cap C_2)+b_{i}(C_1\cup C_2).
\end{equation}
More generally we can prove the following proposition.
\begin{prop} \label{Mayer Vietoris}
    Let $C_j\subseteq M$, $j =1,\dots, s$ be closed sets. Then
    \begin{enumerate}
        \item \label{case 1} for each $0\le i \le m$
        \[
        b_i\Big(\bigcup_{j=1}^sC_j\Big)\le \sum_{\ell=1}^{i+1}\sum_{\substack{L\subseteq \{1,\dots,s\}\\ |L|=\ell}}b_{i-\ell+1}\Big(\bigcap_{j\in L}C_j\Big)
        ;\]
        \item \label{case 2} for each $0\le i \le m$, if $b_m(\partial C_j)=0$ for every $1\le j\le s$ then
        \[
        b_i\Big(\bigcap_{j=1}^sC_j\Big)\le \sum_{\ell=1}^{m-i}\sum_{\substack{L\subseteq \{1,\dots,s\}\\ |L|=\ell}}b_{i+\ell-1}\Big(\bigcup_{j\in L}C_j\Big)+\binom{s}{m-i}b_m(M)
        .\]
    \end{enumerate}
\end{prop}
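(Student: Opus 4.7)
The plan is to prove both inequalities by induction on the number $s$ of sets, iterating the two basic Mayer--Vietoris inequalities \eqref{MV union} and \eqref{MV intersection} recalled above.

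For part \eqref{case 1}, the argument is a routine induction on $s$; the base $s=1$ is tautological. In the inductive step I would set $A = \bigcup_{j=1}^{s-1} C_j$ and $B = C_s$, so that $A \cap B = \bigcup_{j=1}^{s-1}(C_j \cap C_s)$, and apply \eqref{MV union} to obtain
\[
  b_i\Big(\bigcup_{j=1}^{s} C_j\Big) \le b_i(A) + b_i(C_s) + b_{i-1}(A \cap B).
\]
The first term is then bounded by the inductive hypothesis at degree $i$ applied to $C_1, \dots, C_{s-1}$, while the third term is bounded by the inductive hypothesis at degree $i-1$ applied to the $s-1$ sets $C_1 \cap C_s, \dots, C_{s-1} \cap C_s$, using that $\bigcap_{j \in L}(C_j \cap C_s) = \bigcap_{j \in L \cup \{s\}} C_j$. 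Reindexing subsets $L \subseteq \{1,\dots,s\}$ according to whether they contain $s$ then assembles the two contributions into the single sum in the claimed formula.

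For part \eqref{case 2} the plan is dual: I would induct on $s$ using \eqref{MV intersection} with $A = \bigcap_{j=1}^{s-1} C_j$, $B = C_s$, and $A \cup B = \bigcap_{j=1}^{s-1}(C_j \cup C_s)$, so that
\[
  b_i\Big(\bigcap_{j=1}^{s} C_j\Big) \le b_i(A) + b_i(C_s) + b_{i+1}(A \cup B),
\]
and the inductive hypothesis is applied to the first term at degree $i$ and to the third term at degree $i+1$. A reindexing analogous to the one in part \eqref{case 1}, combined with the Pascal identity $\binom{s-1}{m-i}+\binom{s-1}{m-i-1} = \binom{s}{m-i}$, collapses the two contributions into the asserted bound.

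The base case of the second induction occurs at top degree $i = m$, where the sum over $\ell$ is empty and the inequality reduces to $b_m\bigl(\bigcap_{j=1}^s C_j\bigr) \le b_m(M)$. This is where the hypothesis $b_m(\partial C_j)=0$ enters: through a Lefschetz--Alexander duality argument on the compact $m$-manifold $M$, the vanishing of the top Betti number of each boundary forces the top Betti number of the closed subset $\bigcap_j C_j$ to be controlled by $b_m(M)$. The hard part of the proof is exactly this top-dimensional comparison, i.e.\ extracting from the local hypothesis $b_m(\partial C_j)=0$ a uniform global bound by $b_m(M)$; once this is in place the rest of the argument is elementary bookkeeping with binomial coefficients.
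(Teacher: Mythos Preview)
Your treatment of part \eqref{case 1} and of the inductive step in part \eqref{case 2} matches the paper's proof exactly: the same splitting $A=\bigcup_{j<s}C_j$ (resp.\ $A=\bigcap_{j<s}C_j$), the same application of the inductive hypothesis at shifted degree, and the same reindexing via Pascal's identity.

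The divergence is in how the induction for part \eqref{case 2} bottoms out. The induction you set up is on $s$, so its base case is $s=1$, not ``top degree $i=m$''; your phrasing conflates the two. At $s=1$ the inequality for $i<m$ is trivial (the right-hand side already contains $b_i(C_1)$), and for $i=m$ it reduces to $b_m(C_1)\le b_m(M)$. The paper establishes this last inequality without leaving the Mayer--Vietoris framework: apply \eqref{MV parts} to $C_1$ and $Z\coloneqq\overline{M\setminus C_1}$, whose union is $M$ and whose intersection is $\partial C_1$, to get $b_m(C_1)+b_m(Z)\le b_m(M)+b_m(\partial C_1)$, and then invoke the hypothesis $b_m(\partial C_1)=0$. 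Your appeal to Lefschetz--Alexander duality is both unnecessary and not fleshed out; moreover, writing the needed inequality as $b_m\bigl(\bigcap_{j=1}^s C_j\bigr)\le b_m(M)$ for general $s$ overstates what has to be proved at the base, and that general version does not obviously follow from the hypothesis on the individual boundaries $\partial C_j$. Once you replace the duality sketch with the one-line \eqref{MV parts} argument at $s=1$, your proof coincides with the paper's.
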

\begin{proof}
    Let us prove \eqref{case 1}. If $s=1$ the inequality is trivially true.
    Assume by inductive hypothesis that it holds for $s-1$, then by \eqref{MV union}:
    
    \begin{align*}
    b_i\Big(\bigcup_{j=1}^s C_j\Big)\le b_i\Big(\bigcup_{j=1}^{s-1}C_j\Big)+b_i(C_s)+b_{i-1}\Big(\bigcup_{j=1}^{s-1}(C_j\cap C_s)\Big).
    \end{align*}
    Using the inductive hypothesis for $b_i\big(\bigcup_{j=1}^{s-1}C_j\big)$ and $b_{i-1}\big(\bigcup_{j=1}^{s-1}(C_j\cap C_s)\big)$, we obtain:
    \[b_i\Big(\bigcup_{j=1}^{s} C_j\Big)\le \sum_{\ell=1}^{i+1}\sum_{\substack{L\subseteq \{1,\dots,s-1\}\\ |L|=\ell}}b_{i-\ell+1}\Big(\bigcap_{j\in L}C_j\Big)+b_i(C_s)+\sum_{\ell=1}^{i}\sum_{\substack{L\subseteq \{1,\dots,s-1\}\\ |L|=\ell}}b_{i-\ell}\Big(\bigcap_{j\in L}C_j\cap C_s\Big)\]
    The last expression is equal to  $\sum_{\ell=1}^{i+1}\sum_{\substack{L\subseteq \{1,\dots,s\}\\ |L|=\ell}}b_{i-\ell+1}\big(\bigcap_{j\in L}C_j\big)$ proving the statement. 

    For \eqref{case 2}, consider $s=1$. If $0\le i\le m-1$, the inequality clearly holds. If $i=m$ then $b_m(C_1)\le b_m(M)$, indeed, let $Z=\overline{(M\setminus C_1)}$, by \eqref{MV parts}, 
    \[b_m(C_1)+b_m(Z)\le b_m(M)+b_m(\partial C_1).\]
    Since by hypothesis $b_m(\partial C_1)=0$, then the base case is proven.
    Assume now that case \eqref{case 2} holds for $s-1$. Then by \eqref{MV intersection}
    \begin{equation}\label{eq:prop MV} b_i\Big(\bigcap_{j=1}^s C_j\Big)\le b_i\Big(\bigcap_{j=1}^{s-1}C_j\Big)+b_i(C_s)+b_{i+1}\Big(\bigcap_{j=1}^{s-1}(C_j\cup C_s)\Big).\end{equation}
    Using the inductive hypothesis we obtain
    \begin{align*}
        b_i\Big(\bigcap_{j=1}^{s-1} C_j\Big)\le &\sum_{\ell=1}^{m-i}\sum_{\substack{L\subseteq \{1,\dots,s-1\}\\ |L|=\ell}}b_{i+\ell-1}\Big(\bigcup_{j\in L}C_j\Big)+\binom{s-1}{m-i}b_m(M),\end{align*}
    and 
    \[b_{i+1}\Big(\bigcap_{j=1}^{s-1} (C_j\cup C_s)\Big) \le \sum_{\ell=1}^{m-i-1}\sum_{\substack{L\subseteq \{1,\dots,s-1\}\\ |L|=\ell}}b_{i+\ell}\Big(\bigcup_{j\in L}C_j\cup C_s\Big)+\binom{s-1}{m-i-1}b_m(M).\]
    Substituting in \eqref{eq:prop MV} we get
    \[b_i\Big(\bigcap_{j=1}^s C_j\Big)\le \sum_{\ell=1}^{m-i}\sum_{\substack{L\subseteq \{1,\dots,s\}\\ |L|=\ell}}b_{i+\ell-1}\Big(\bigcup_{j\in L}C_j\Big)+\binom{s}{m-i}b_m(M).\]
\end{proof}

%\subsubsection{$C^r$--condition number}
%\input{2 Preliminaries/2.3 Topology of zero sets/2.3.1 condition number}

\section{Betti numbers of regular zero sets}
\subsection{Proof of Theorem A}
Before its proof, we restate the theorem.
\begingroup 
\def\thethm{\ref{theorem A}}
\begin{thm} 
    Let $(M,g)$ be a compact Riemannian manifold of dimension $m$. There exists a constant $c_1=c_1(M,g)>0$ such that for any \mbox{$f\in C^1(M,\R^k)$}, we have: 
    \[b(Z(f))\le c_1\cdot k(f)^m.\]
\end{thm}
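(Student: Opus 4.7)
If $\delta(f)=0$ there is nothing to prove, so assume $f$ is regular, i.e.\ $\delta(f)>0$. The plan is to approximate both the ambient manifold and the map $f$ by algebraic data, and then to invoke \Cref{theorem B}.

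\textbf{Step 1 (algebraic model of $M$).} By the Nash--Tognoli theorem (\Cref{Nash tognoli theorem}), fix once and for all a regular algebraic variety $\widetildeto{A}{M}=Z(q_1,\dots,q_\ell)\subseteq\R^n$ diffeomorphic to $M$, with $\deg q_i\le d_0$; working on connected components separately and using \Cref{rem:nash tognoli} we may assume $\widetildeto{A}{M}$ is irreducible. Pick a diffeomorphism $\Phi\colon\widetildeto{A}{M}\to M$ and set $\tilde f:=f\circ\Phi$. Since $M$ and $\widetildeto{A}{M}$ are diffeomorphic compact manifolds, any two choices of Riemannian metrics (say $g$ pulled back through $\Phi$ versus $i^*g_{\R^n}$) yield equivalent fiberwise norms on $J^1$; hence
\[
\tfrac{1}{C}\,\|f\|_{C^1(M)}\le\|\tilde f\|_{C^1(\widetildeto{A}{M})}\le C\,\|f\|_{C^1(M)},\qquad \tfrac{1}{C}\,\delta(f)\le\delta(\tilde f)\le C\,\delta(f),
\]
for a constant $C=C(M,g,\Phi)$. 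In particular $k(\tilde f)$ and $k(f)$ are comparable up to a multiplicative constant depending only on $(M,g)$, and $b(Z(f))=b(Z(\tilde f))$.

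\textbf{Step 2 (polynomial approximation).} Apply the quantitative Weierstrass approximation \Cref{weierstrass appro to use} on $\widetildeto{A}{M}\subseteq\R^n$: for every $d\ge 1$ there is a polynomial map $p\colon\R^n\to\R^k$ with $\deg p\le d$ satisfying
\[
\|\tilde f-p\|_{C^0(\widetildeto{A}{M})}\le\tfrac{c_0}{d}\,\|\tilde f\|_{C^1(\widetildeto{A}{M})}.
\]
Choose the smallest integer $d$ with $\tfrac{c_0}{d}\|\tilde f\|_{C^1}\le \delta(\tilde f)/4$, so that $d\le 4c_0\,k(\tilde f)+1=O(k(f))$. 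By Sard's theorem applied to $p|_{\widetildeto{A}{M}}$, pick a regular value $c\in\R^k$ of $p|_{\widetildeto{A}{M}}$ with $\|c\|\le\delta(\tilde f)/4$, and set $\tilde p:=p-c$. Then $\deg\tilde p\le d$, $0$ is a regular value of $\tilde p|_{\widetildeto{A}{M}}$, and
\[
\|\tilde f-\tilde p\|_{C^0(\widetildeto{A}{M})}\le\tfrac{\delta(\tilde f)}{2}.
\]

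\textbf{Step 3 (semicontinuity and Thom--Milnor on a variety).} Since both $\tilde f$ and $\tilde p|_{\widetildeto{A}{M}}$ are transverse to $\{0\}$, \Cref{semicontinuity} gives
\[
b(Z(f))=b(Z(\tilde f))\le b\bigl(Z(\tilde p)\cap\widetildeto{A}{M}\bigr).
\]
Applying \Cref{theorem B} to $\widetildeto{A}{M}$ and the polynomial map $\tilde p$ of degree $\le d$ yields
\[
b\bigl(Z(\tilde p)\cap\widetildeto{A}{M}\bigr)\le d_0^{\,n-m}\bigl((n-m)(d_0-1)+2d-1\bigr)^m.
\]
Since $n$, $m$, $d_0$ depend only on the fixed choice of $\widetildeto{A}{M}$, and $d=O(k(f))$ by Step 2, the right-hand side is $\le c_1\,k(f)^m$ for a constant $c_1=c_1(M,g)$, which is the claimed bound.

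\textbf{Main obstacle.} The delicate point is that the Weierstrass approximation is only quantitative in the $C^0$ norm, whereas the stability statement in \Cref{stability lemma} requires $C^1$ closeness. This forces us to route the argument through the $C^0$-semicontinuity \Cref{semicontinuity}, which in turn demands the polynomial approximant to have a regular zero set; the Sard perturbation in Step 2 is precisely what fixes this, and the extra factor of $2$ in the tolerance $\delta(f)/2$ of \Cref{semicontinuity} is what makes the choice of constants work.
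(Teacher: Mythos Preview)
Your proof is correct and follows the same strategy as the paper: pass to an algebraic model via Nash--Tognoli, use the quantitative Weierstrass approximation in $C^0$, invoke the $C^0$--semicontinuity of Betti numbers, and finish with \Cref{theorem B}; the metric--comparison argument for $k(\tilde f)\asymp k(f)$ is also the same. The only real difference is your Sard perturbation in Step~2 to force $\tilde p|_{\widetildeto{A}{M}}\pitchfork\{0\}$: the paper simply applies \Cref{semicontinuity} to the raw polynomial $p$ without checking this hypothesis, so your version is in fact the more careful one (the paper's application is still justifiable by falling back on \Cref{general semicontinuity} together with the fact that real algebraic sets are triangulable, hence \v{C}ech and singular cohomology agree).
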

\addtocounter{thm}{-1}
\endgroup

\begin{proof}
    Notice that it is sufficient to prove the theorem when $M$ is connected, since we can just sum the bound obtained componentwise. Therefore let us assume $M$ connected.    
    By the Nash--Tognoli Theorem (\Cref{Nash tognoli theorem}) and \cref{rem:nash tognoli}, there exists $\widetildeto{A}{M}\subseteq \R^n$  algebraic, regular, irreducible variety diffeomorphic to $M$. Denote by $\psi\colon \widetildeto{A}{M}\to M $ the given diffeomorphism and by $\tilde{g}$ the metric induced by $i\colon \widetildeto{A}{M}\to \R^n$.
    Consider $\Tilde{f}\coloneqq f \circ \psi \colon \widetildeto{A}{M}\to \R^k$. 
    
    By \Cref{weierstrass appro to use} there exists $c_0=c_0(\widetildeto{A}{M})$ such that, for every $d\in \mathbb{N}$, there exists a polynomial map $p=(p_1,\dots,p_k)$, where $p_i\in P_{n,d}$, such that 
    \[\|\tilde{f}-p\|_{C^0(\widetildeto{A}{M},\tilde{g})} \le \frac{c_0}{d}\|\tilde{f}\|_{C^1(\widetildeto{A}{M},\tilde{g})}.\]
    Let us consider $\delta_{\tilde{g}}(\tilde{f}) $ and the condition  number 
    \[k_{\tilde{g}}(\tilde{f})=\frac{\|\tilde{f}\|_{C^1(M,\tilde{g})}}{\delta_{\tilde{g}}(\tilde{f})},\] where the subscript $\tilde{g}$ is a reminder of the metric we are considering. Now, choose $d\in \mathbb{N}$ such that 
    \begin{equation} \label{optimal degree}   2c_0k_{\tilde{g}}(\tilde{f})<d\le 2c_0k_{\tilde{g}}(\tilde{f})+1,
    \end{equation} we obtain
    $\|\tilde{f}-p\|_{C^0(\widetildeto{A}{M},\tilde{g})}<\frac{\delta_{\tilde{g}}(\tilde{f})}{2}.$

    Now we have the following chain of inequalities
    \begin{align*}    
        b(Z(f))&=b(Z(\tilde{f})) & \\
        &\le b(Z(p)\cap \widetildeto{A}{M}) &\text{(by \Cref{semicontinuity}),}\\    &=O(d^m)\phantom{\tilde{f}}&\text{(by \Cref{theorem B}),}\\
        &=O(k_{\tilde{g}}(\tilde{f})) &\text{(by \eqref{optimal degree}).}       
    \end{align*}

    It remains to prove that $k_{\tilde{g}}(\tilde{f})=O(k_g(f))$, with constants independent of $f$.
    This follows easily by the compactness of $\widetildeto{A}{M}$. Indeed, by compactness, the following quantities are finite: $$\lambda(M,g,\psi) \coloneqq \min_{(x,v)\in T^1 \widetildeto{A}{M}}\frac{1}{\|v\|_{\psi^*g}}, \;\;\; \Lambda(M,g,\psi)\coloneqq\max_{(x,v)\in T^1 \widetildeto{A}{M}}\frac{1}{\|v\|_{\psi^*g}}.$$ It follows that for each $(x,v) \in T\widetildeto{A}{M}$:
    \[\lambda\cdot \|v\|_{\psi^*g}\le \|v\|_{g_{\widetildeto{A}{M}}}\le \Lambda\cdot \|v\|_{\psi^*g}.\]
     Therefore the condition number $k_{\tilde{g}}(\Tilde{f})= \frac{\|\Tilde{f}\|_{C^1(\widetildeto{A}{M},\tilde{g})}}{\delta_{\tilde{g}}(\Tilde{f})}$ can be bounded by:
    \begin{equation}\label{final equation theorem B}
    \frac{\lambda\cdot \|\tilde{f}\|_{C^1(\widetildeto{A}{M},\psi^*g)}}{\Lambda \cdot \delta_{\psi^*g}(\tilde{f})}\le\frac{\|\Tilde{f}\|_{C^1(\widetildeto{A}{M},\tilde{g})}}{\delta_{\tilde{g}}(\Tilde{f})}\le \frac{\Lambda\cdot \|\tilde{f}\|_{C^1(\widetildeto{A}{M},\psi^*g)}}{\lambda \cdot \delta_{\psi^*g}(\tilde{f})}.\end{equation}    
    Since $\psi:(M,g)\to (\widetildeto{A}{M},\psi^*g)$ is an isometry then $k_{\psi^*g}(\Tilde{f})=k_g(f)$. Therefore \eqref{final equation theorem B} implies that $k_{\tilde{g}}(\tilde{f})=O(k_g(f))$ which concludes the proof. 
\end{proof}
\begin{remark}
    In the previous theorem, the constant $c_1(M,g)$ a priori also depends on the Nash--Tognoli diffeomorphism $\psi:\widetildeto{A}{M}\to M$ that we choose. Therefore, to be more precise, we should write $c_1(M,g)=c_1(M,g,\psi)$. However, since no object in \Cref{theorem A} depends on $\psi$, we can consider $$c_1(M,g)\coloneqq\inf_{\psi:\widetildeto{A}{M}\to M}c_1(M,g,\psi)$$ which makes the constant $c_1$ independent of $\psi$.
\end{remark}
\subsection{Sharpness of the bound}
The next proposition shows that the bound obtained in \Cref{theorem B} can be attained, up to some constants which are independent of $f$. 
\begin{prop}\label{propo:sharp}
    Let $(M,g)$ be a smooth Riemannian manifold of dimension $m$.
    There exists a bounded sequence $\{f_n\}_{n\in \mathbb{N}}\subset C^1(M,\R^k)$ with
    \[\lim_{n\to \infty} k(f_n)=+\infty\]
    and a constant $c_2=c_2(m)>0$ such that for every $n\in \mathbb{N}$ the zero set $Z(f_n)$ is regular and 
    \[b(Z(f_n))\ge c_2\cdot k(f_n)^m\]
\end{prop}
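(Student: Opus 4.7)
The plan is to exhibit, for each $N\in\mathbb{N}$, a function $f_N\in C^1(M,\R^k)$ whose zero set consists of $\sim N^m$ disjoint regular $(m-k)$--spheres, while $\|f_N\|_{C^1}$ stays uniformly bounded and $\delta(f_N)$ scales as $1/N$; this forces $k(f_N)\sim N\to\infty$ and $b(Z(f_N))\geq cN^m\geq c'k(f_N)^m$. The construction is purely local: it lives in a single small Euclidean chart of $M$, and is extended by a constant $v_0/N$ outside, for a fixed nonzero $v_0\in\R^k$.

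For the local model, fix once and for all a smooth $\rho:\R^m\to\R^k$ such that $0$ is a regular value, $\rho^{-1}(0)\subset B_{1/2}(0)$ is a regular $(m-k)$--sphere, and $\rho\equiv v_0$ on $\R^m\setminus B_{3/4}(0)$ for some fixed $v_0\in\R^k\setminus\{0\}$. For $k=1$ one can smoothly cap $|y|^2-1/4$ at $v_0=1$; for $1<k\le m$ one can start with $\rho(y)=(y_1,\dots,y_{k-1},|(y_k,\dots,y_m)|^2-1/4)$ and analogously cap outside $B_{3/4}$. By compactness and \Cref{lem: distance discriminant = distance sigma} there is $c_\rho>0$ with $\|\rho(y)\|+\sigma_k(D_y\rho)\ge c_\rho$ for every $y\in\R^m$. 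Pick a grid $G_N\subset B_{1/2}(0)$ of $|G_N|\sim N^m$ points of pairwise distance $\ge 3/(2N)$, so that the balls $B_{3/(4N)}(x_j)$ are pairwise disjoint and contained in $B_1(0)$, and define
\[
\rho_N(x) \coloneqq
\begin{cases}
\tfrac{1}{N}\,\rho\bigl(N(x-x_j)\bigr) & \text{if } x\in B_{3/(4N)}(x_j)\text{ for some } x_j\in G_N, \\
\tfrac{1}{N}\,v_0 & \text{otherwise.}
\end{cases}
\]
This is smooth because the two definitions match at the boundary of each small ball. A direct calculation gives $D_x\rho_N(x)=D_y\rho(y)$ with $y=N(x-x_j)$ inside the small balls, so $\|\rho_N\|_{C^1(\R^m)}=\Theta(1)$. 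The zero set $Z(\rho_N)$ is a disjoint union of $|G_N|\sim N^m$ rescaled copies of the regular $(m-k)$--sphere $\rho^{-1}(0)$, whence $b(Z(\rho_N))\ge 2|G_N|\sim N^m$. For $\delta(\rho_N)$, apply \Cref{equality distance discriminant}: inside a small ball, $\|\rho_N(x)\|+\sigma_k(D_x\rho_N)=\|\rho(y)\|/N+\sigma_k(D_y\rho)$; splitting into the cases $\sigma_k(D_y\rho)\ge c_\rho/2$ (contribution $\ge c_\rho/2$) and $\sigma_k(D_y\rho)<c_\rho/2$ (forcing $\|\rho(y)\|\ge c_\rho/2$ and hence the contribution $\ge c_\rho/(2N)$) yields a lower bound $c_\rho/(2N)$; outside the small balls the expression is simply $\|v_0\|/N$. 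Therefore $\delta(\rho_N)\ge c/N$ for a constant $c>0$ depending only on $\rho$, and consequently $k(\rho_N)=\Theta(N)$ and $b(Z(\rho_N))\ge c_2 k(\rho_N)^m$.

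To transfer to $M$, fix a smooth chart $\phi:B_1(0)\hookrightarrow M$ whose metric distortion is at most $1+\varepsilon$, which is always possible by taking a chart over a sufficiently small metric ball and rescaling, since every Riemannian manifold is locally Euclidean at small scale. Define $f_N(p)\coloneqq\rho_N(\phi^{-1}(p))$ on $\phi(B_1(0))$ and $f_N(p)\coloneqq v_0/N$ elsewhere; these agree near $\phi(\partial B_1(0))$, so $f_N$ is smooth on $M$ and $Z(f_N)=\phi(Z(\rho_N))$. The bi-Lipschitz comparison between $(B_1(0),g_{\R^m})$ and $(\phi(B_1(0)),g)$ controls $\|f_N\|_{C^1(M,g)}$ and $\delta(f_N)$ up to a factor $(1+\varepsilon)^{O(1)}$ of the Euclidean quantities, so $k(f_N)$ and $k(\rho_N)$ differ by such a factor. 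Thus $\{f_N\}$ is bounded in $C^1(M,\R^k)$ with $k(f_N)\to\infty$ and $b(Z(f_N))\ge c_2 k(f_N)^m$, with $c_2=c_2(m)$ after absorbing the distortion factor by fixing $\varepsilon$ small enough once and for all.

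The main obstacle is the uniform lower bound $\delta(\rho_N)\ge c/N$: the scaling $\rho(N(x-x_j))/N$ is designed to keep the first derivative bounded (so that the $C^1$--norm stays uniformly bounded), at the cost of contracting values by $1/N$; this contraction is precisely what forces the factor $1/N$ into the $\delta$--estimate, and the two-case argument above is the mechanism that realizes it in the worst case. A secondary subtlety, handled by working in a nearly isometric chart, is that the statement requires $c_2$ to depend only on $m$, which rules out absorbing uncontrolled Lipschitz distortion from $(M,g)$ into the constant.
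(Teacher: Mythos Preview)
Your argument is correct and follows the same packing-in-a-chart strategy as the paper: build a local Euclidean model with $\sim N^m$ disjoint regular zero components, then push it onto $M$ through a single chart. Two differences are worth noting. First, you rescale values by $1/N$, so that $\|\rho_N\|_{C^1}=\Theta(1)$ and $\delta(\rho_N)=\Theta(1/N)$; the paper instead takes $g_n(x)=g(n(x-x_{n,i}))$ with no value rescaling, which gives $\|g_n\|_{C^1}\sim n$ and $\delta(g_n)\ge\delta(g)$. Your choice directly produces a \emph{bounded} sequence as the statement requires, whereas the paper's sequence has $\|g_n\|_{C^1}\to\infty$ (this is harmless in practice since $k(\cdot)$ is scale-invariant, but your version matches the statement as written). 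Second, you insist on a nearly isometric chart so that the constant honestly depends only on $m$; the paper simply pulls back the metric through an arbitrary chart and ends with $c_2=2C\,k(f)^{-m}$, which ostensibly depends on the chart and hence on $(M,g)$. Your two-case lower bound for $\delta(\rho_N)$ is the right mechanism and is essentially the computation hidden behind the paper's one-line estimate $\delta(g_n)\ge\delta(g)$.
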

\begin{proof} 
    Consider a smooth function $\psi \colon \D^m \to \R^k$ such that 
 $\psi|_{\{\|x\|>\frac{1}{2}\}}\equiv 0$ and such that it attains a non--zero regular value $a\in \R^k$ for some $x\in \D^m$. Set $g\coloneqq a-\psi$. Since $Z(g)=\psi^{-1}(a)\hookrightarrow \D^m,$ then the zero set of $g$ is non empty smooth compact submanifold contained in the interior of $\D^m$. Therefore \begin{equation}\label{sharpness betti number}
     b(Z(g))\ge 2,
 \end{equation} and since  $0$ is a regular value of $g$, we have also that  $k(g)<\infty$.
     For each $n\in \mathbb{N}$, $n>0$, let $I_n$ be a set of points $x_{n,i}\in \D^m$ such that the disks $D_{n,i}$ centered at $x_{n,i}$ with radius $n^{-1}$ are disjoint.
     Since the Hausdorff dimension of $\D^m$ is $m$, we can assume that the number of points is  \begin{equation}\label{number of balls}
         |I_n|=C\cdot n^m
     \end{equation}
     
     Define the sequence of functions $(g_n)_{n\in \mathbb{N}}\subset C^1(\D^m,\R^k)$ by  \[    g_n(x) \coloneqq     \begin{cases}        g\left(n(x - x_{n,i})\right) & \text{if } x \in D_{n,i} \text{ for some } i \in I_n, \\        a & \text{otherwise}.    \end{cases}\]
     Each $g_n$ is well defined and smooth because $f$ is constant equal to $a$ on a neighborhood of the boundary of each disk $D_{n,i}$. Moreover  $Z(g_n)$ is the disjoint union of $|I_n|$ submanifold diffeomorphic to $Z(g)$. 
     We also have that 
     \begin{align*}
     \delta(g_n)&=\inf_{x\in \D^m} \|g_n(x)\|+\sigma_k( D_xg_n(x))\ge \delta(g) \\
     \|g_n\|_{C^1}&= \sup_{x\in \D^m} \|g_n(x)\|+\sigma_k( D_xg_n(x))\le  n\|g\|_{C^1},\end{align*}
     which implies that \begin{equation}\label{condition number fn}
         k(g_n)\le n k(g).
     \end{equation}   
     From \eqref{sharpness betti number}, \eqref{number of balls} and \eqref{condition number fn}  , we obtain
     \[b(Z(g_n))=2Cn^m\ge 2C\left(\frac{k(g_n)}{k(g)}\right)^m.\]
     Let now $\phi \colon U \to \D^m$ be a local chart of $M$ and set $f_n\coloneqq g_n\circ \phi$ and $f\coloneqq g\circ \phi$. Since $f_n$ and $f$ are constant equal to $a$ at the boundary of $U$, we can extend them in a smooth way by setting $f_n(x)=a$ and $f(x)=a $ for any $x\in M\setminus U$. Choosing on the disk the metric $\phi^{-1}g_M^*$, the condition numbers of $f_n$ and $g_n$ coincide. Therefore, setting $c_2=2C k(f)^{-m}$, we obtain
     \[b(Z(f_n))\ge c_2\cdot k(f_n)^m.\]
     
\end{proof}

\subsection{Constant dependence}
In this section, we will show that the constant $c_1=c_1(M,g)$, which appears in \Cref{theorem B}, can be chosen so that it does not  depend on a specific manifold, but only on certain quantities that we will analyze. In particular, we will maximize $c_1(M,g)$ over a class of Riemannian manifolds $\mathcal{M}(m,D,\Lambda,V)$. Because this class is compact in the $C^1$ topology, we obtain a finite constant $c_3=c_3(m,D,\Lambda,V)$ depending only on the family.

\begin{definition}
    We denote by $\mathcal{M}(m,D,\Lambda,V)$ the set of smooth compact Riemannian manifolds $(M,g)$ of dimension $m$, diameter $\diam(M)\le D$, sectional curvature $|K_M|\le \Lambda$ and volume $\text{vol}(M)\ge V$.
\end{definition}
Notice that if $(M,g)\in \mathcal{M}(m,D,\Lambda,V)$,  since $\diam M< D$, then $M$ must be connected. This is not a big concern, since we can use the bound in \Cref{constant dependence} for each connected component that is contained in $\mathcal{M}(m,D,\Lambda,V)$.

The next theorem ensures that the class $\mathcal{M}(m,D,\Lambda,V)$ is compact in the $C^1$ topology. It can be found in \cite{Peters1987} Theorem 4.4.
\begin{theorem} \label{peters}
    Let $\{(M_k,g_k)\}_{k\in\mathbb{N}}\subseteq\mathcal{M}(m,D,\Lambda,V)$. Fixed $\alpha \in (0,1) $ there exists a subsequence $\{(M_h,g_h)\}$, a $C^{1+\alpha}$--Riemannian manifold $(M,g)$ and diffeomorphisms $\varphi_h \colon M \to M_h$ such that $(\varphi_h)^*g_h\to g$ in the $C^1$ topology.
\end{theorem}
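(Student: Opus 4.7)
The plan is to follow the classical Cheeger--Gromov--Peters strategy: control the local geometry of each $(M_k,g_k)$ uniformly via harmonic coordinates, then extract a limit by Arzel\`a--Ascoli. The key geometric ingredient that prevents collapsing and makes the whole scheme work is a uniform lower bound on the injectivity radius. This follows from Cheeger's lemma: the bounds $|K_M|\le\Lambda$, $\diam(M)\le D$ and $\mathrm{vol}(M)\ge V$ yield a constant $i_0=i_0(m,D,\Lambda,V)>0$ with $\mathrm{inj}(M_k)\ge i_0$ for every $k$. Once this is in hand, Bishop--Gromov volume comparison together with $\diam(M_k)\le D$ bounds from above the cardinality of any maximal $(i_0/4)$--separated net in $M_k$ by a constant $N=N(m,D,\Lambda,V)$, so after passing to a subsequence we may assume that all $M_k$ admit a covering by exactly $N$ balls of radius $r_0<i_0$ centred at such a net.

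Next I would introduce harmonic coordinates on each of these balls, using Jost--Karcher's theorem: given the lower bound on injectivity radius and the two--sided sectional curvature bound, one produces on a ball of radius $r_0$ coordinates $(x^1,\dots,x^m)$ satisfying $\Delta_{g_k}x^i=0$, in which the metric components $(g_k)_{ij}$ are uniformly bounded in $C^0$ and the coordinate map has uniformly bounded distortion. In these harmonic coordinates the metric satisfies the semilinear elliptic system
\[
\tfrac{1}{2}g^{ab}\partial_a\partial_b g_{ij} + Q(g,\partial g) = -\mathrm{Ric}_{ij},
\]
and since $|\mathrm{Ric}|$ is bounded by $(m-1)\Lambda$, Schauder estimates promote the $C^0$ bound on $g_{ij}$ to a uniform $C^{1+\alpha}$ bound on a slightly smaller ball for every $\alpha\in(0,1)$. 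Applying the same estimates to the transition functions between overlapping charts gives uniform $C^{2+\alpha}$ bounds on them.

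The final step is a diagonal subsequence argument. By Arzel\`a--Ascoli, after extracting subsequences one may assume that for each pair of chart indices the transition maps converge in $C^{1+\alpha'}$ for any $\alpha'<\alpha$, and that the metric tensors, read in these harmonic charts, converge in $C^{1+\alpha'}$. The limit transition maps satisfy the cocycle condition and therefore glue to a $C^{1+\alpha}$ manifold $M$ carrying a $C^{1+\alpha}$ Riemannian metric $g$; redoing the argument with $\alpha$ replaced by some $\alpha''>\alpha$ recovers the prescribed H\"older exponent. The diffeomorphisms $\varphi_h\colon M\to M_h$ are then built by patching together the inverses of the limit charts composed with the charts of $M_h$, using a partition of unity subordinate to the limit covering; by construction $(\varphi_h)^*g_h\to g$ in $C^1$. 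The main technical obstacle is the harmonic coordinates construction together with the quantitative Schauder estimates, which is delicate but entirely standard once Cheeger's injectivity radius bound is available; the rest of the argument is a routine compactness and gluing procedure.
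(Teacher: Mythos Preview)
The paper does not prove this theorem at all: it is stated as a known result and attributed to Peters (Theorem~4.4 of \cite{Peters1987}), with no argument given. Your outline is a faithful sketch of precisely that classical Cheeger--Gromov--Peters compactness proof (Cheeger's injectivity radius estimate, Jost--Karcher harmonic coordinates with uniform $C^{1,\alpha}$ control on the metric and $C^{2,\alpha}$ control on transitions, Arzel\`a--Ascoli, and a centre--of--mass/partition--of--unity gluing to produce the diffeomorphisms $\varphi_h$), so there is nothing to compare: you have supplied what the paper merely cites.
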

As a consequence we can prove the existence of a uniform constant $c_3(m,\Lambda,D,V)$.
\begin{prop} \label{constant dependence} There exists a constant $c_3=c_3(m,\Lambda,D,V)>0$, such that for every  Riemannian manifold $(M,g)\in \mathcal{M}(m,D,\Lambda,V)$ and every $f\in C^1(M,\R^k)$, we have that
    \[b(Z(f))\le c_3 \cdot k(f)^m.\]
\end{prop}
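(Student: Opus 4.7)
The plan is to argue by contradiction using the $C^1$-compactness of the family $\mathcal{M}(m,D,\Lambda,V)$ provided by Peters' theorem (\cref{peters}). Suppose no uniform constant exists; then for each $n \in \mathbb{N}$ one can find $(M_n, g_n) \in \mathcal{M}(m, D, \Lambda, V)$ and $f_n \in C^1(M_n, \R^k)$ with
$$b(Z(f_n)) > n \cdot k(f_n)^m.$$
We may assume $\delta(f_n) > 0$, since otherwise $k(f_n) = \infty$ and the desired bound is trivial. By \cref{peters}, after extraction there exists a $C^{1+\alpha}$--Riemannian manifold $(M, g)$ together with diffeomorphisms $\varphi_n\colon M\to M_n$ such that $(\varphi_n)^* g_n \to g$ in $C^1(M)$.

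Set $\tilde f_n \coloneqq f_n \circ \varphi_n \in C^1(M,\R^k)$. Since $\varphi_n$ is a diffeomorphism, $b(Z(\tilde f_n)) = b(Z(f_n))$; and since $\varphi_n\colon (M, (\varphi_n)^* g_n) \to (M_n, g_n)$ is an isometry, $k_{(\varphi_n)^* g_n}(\tilde f_n) = k_{g_n}(f_n)$. The next step is a metric-comparison argument in the spirit of \eqref{final equation theorem B}: since $(\varphi_n)^* g_n \to g$ in $C^1(M)$, there exist $n_0$ and $K > 0$ such that
$$K^{-1}\|v\|_g \le \|v\|_{(\varphi_n)^* g_n} \le K\|v\|_g$$
for all $n \ge n_0$ and every $v\in TM$. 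By \cref{C^1 norm}, \cref{lem: distance discriminant = distance sigma} and \cref{equality distance discriminant}, the quantities $\|\tilde f_n\|_{C^1}$ and $\delta(\tilde f_n)$ depend on the metric only through the induced fibrewise norm on $T^*M$, so the above comparison propagates to $k_g(\tilde f_n) \le C \cdot k_{(\varphi_n)^* g_n}(\tilde f_n)$ for a constant $C = C(K)$ independent of $n$.

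Applying \cref{theorem A} to the fixed limit $(M, g)$ now gives
$$n \cdot k_{g_n}(f_n)^m < b(Z(f_n)) = b(Z(\tilde f_n)) \le c_1(M, g) \cdot k_g(\tilde f_n)^m \le c_1(M, g) \cdot C^m \cdot k_{g_n}(f_n)^m,$$
which forces $n < c_1(M, g) \cdot C^m$, a contradiction for $n$ large.

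The main obstacle I expect is checking that \cref{theorem A} genuinely applies to the limit $(M, g)$, whose metric is only $C^{1+\alpha}$ rather than $C^\infty$. Tracing through the proof of \cref{theorem A}, however, the smooth structure of $M$ (the only ingredient needed for Nash--Tognoli) is preserved in the limit, and $g$ enters the remaining argument solely through continuous quantities and through the constants $\lambda(M,g,\psi), \Lambda(M,g,\psi)$ comparing $\psi^* g$ with the Euclidean metric on the algebraic model $\widetildeto{A}{M}$; both remain finite and strictly positive by compactness. Hence $c_1(M,g)<\infty$ and the contradiction closes.
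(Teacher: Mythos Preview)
Your argument is correct and follows essentially the same strategy as the paper: use Peters' $C^1$--compactness to pass to a limit $(M,g)$, compare condition numbers via the $C^1$--convergence of the metrics, and invoke \cref{theorem A} on the limit manifold (noting, as you do, that the proof of \cref{theorem A} only needs the smooth structure of $M$ together with a continuous metric). The only cosmetic difference is that the paper phrases this as showing $\sup_{(M,g)\in\mathcal{M}} c_1(M,g)<\infty$ directly---after first observing that $\mathcal{M}(m,D,\Lambda,V)$ contains only finitely many diffeomorphism types---whereas you package the same mechanism as a contradiction; your version is arguably a bit more streamlined since it sidesteps the explicit diffeomorphism--type discussion.
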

\begin{proof}
    For any $m,D,V,\Lambda>0$, there are only finitely many diffeomorphism classes in $\mathcal{M}=\mathcal{M}(m,D,\Lambda,V)$. Indeed, suppose there were infinitely such classes. Then one could find a sequence $\{M_k\}_{k\in \mathbb{N}}\subset \mathcal{M}$ such that $M_{k_1}$ is not diffeomorphic to $ M_{k_2}$ for any $k_1\not=k_2$. By \Cref{peters}, however, there would exist a manifold $M$ and a subsequence $M_h$ such that each $M_h$ is diffeomorphic to $M$, contradicting the assumption that all $M_k$ are pairwise non--diffeomorphic.
    
    For each diffeomorphism class in $\mathcal{M}$, by Nash--Tognoli (\Cref{Nash tognoli theorem}), choose an algebraic representative $\widetildeto{A}{M}$, not necessarily belonging to $\mathcal{M}$. Given $(M,g)\in \mathcal{M}$, denote by
\[c_1(M,g)=\inf_{\psi'\colon \widetildeto{A}{M}\to M}c_1(M,g,\psi'),\]
where $\widetildeto{A}{M}$ is the algebraic representative of $M$ and $c_1(M,g,\psi')$ is the constant given by \cref{theorem A}.
Let now 
$$c_3=c_3(m,D,V,\lambda)\coloneqq\sup_{(M,g)\in \mathcal{M}}c_1(M,g).$$
By definition of sup and inf, for any $(M,g)\in \mathcal{M}$ and for any $\epsilon>0$, there exists a diffeomorphism $\psi$ satisfying:
\[c_1(M,g,\psi)< c_3+\epsilon.\]
Therefore $b(Z(f))\le (c_3+\epsilon)\cdot k(f)^m $, for any $f\colon M\to \R^k$, but since $\epsilon $ is arbitrary we have $$b(Z(f))\le c_3\cdot k(f)^m.$$
It remains to prove that  $c_3$ is finite.
Let $(M_k,g_k)\in \mathcal{M}$ be a maximizing sequence for $c_1(M,g)$, i.e. $\lim_k c_1(M_k,g_k)=c_3$.
By \Cref{peters}, there exists a subsequence $(M_h,g_h)$ and a $C^1$ Riemannian manifold $(M,g)$ such that $\varphi_h^*g_h \to g$ in the $C^1$ topology. 
Since $M_h,M$ belong to the same diffeomorphism class, they have a common algebraic representative $\widetildeto{A}{M}$ .

Let $\psi\colon \widetildeto{A}{M}\to M$ any diffeomorphism. The following chain of inequalities concludes the proof:
\begin{align*}
    c_3&=\lim_{h\to \infty} c_1(M_h,g_h)\\
    &\le \lim_{h\to \infty} c_1(M_h,g_h,\varphi_h\circ \psi)\\
    &= \lim_{h\to \infty} c_1(M,\varphi_h^*g_h,\psi)\\
    &=c_1(M,g,\psi).
\end{align*}
The third equality follows easily from the fact that $\varphi_h:(M,\varphi^*_h g_h) \to (M_h,g_h)$ is an isometry, and the last equality follows from the uniform convergence of the metric.
\end{proof}
\section{Semialgebraic--type sets}
\subsection{Definitions and basic properties}
Recall that a semialgebraic set described by a finite set of polynomials  $\{p_1,\dots, p_s\}\subseteq P_{n,d}$ can be written as
\[S\coloneqq \bigcup_i \bigcap_{j=1}^s \{p_j *_{ij}0\} ,\]
where $*_{ij}\in \{<,>,=\}$. We say that a semialgebraic set is described by closed conditions if $*_{ij}\in \{\le,\ge,=\}$. From a closed description, one can always obtain a non-closed description simply by splitting each inequality $\le$ into the cases $<$ and 
$=$.

In analogy with the classical semialgebraic case, if we replace the polynomials with smooth functions, one can consider:
\begin{definition}[Semialgebraic--type set]
     Let $\F=\{f_1,\dots,f_s\}$ be a smooth family (see \Cref{smooth family definition}). We say that a $S\subseteq M$ is a  semialgebraic--type set on a family $\F$ if it can be written in the form:
    \begin{equation}\label{closed semi}
    S\coloneqq \bigcup_i \bigcap_{j=1}^s \{f_j *_{ij}0\} , \end{equation}  
    where $*_{ij}\in \{<,>,=\}$.
\end{definition}

\begin{remark}\label{remark  closed semialgebraic type sets}
Notice that \eqref{closed semi} can be rewritten as $S=f^{-1}(T)$ with $T\subseteq \R^s$ given by:
\[T=\bigcup_i \bigcap_{j=1}^s \{x_j *_{ij}0\},\]
where $*_{ij}\in \{<,>,=\}$. The set $T$ is a union of strata of the orthant stratification $(\R^s,\mathcal{S})$. If $k(\F)<\infty$ then, by \Cref{equivalent description of regularity}, we have that $f\pitchfork \mathcal{S}$, so also $f\pitchfork T$. This implies that $S$ is Whitney stratified by the stratification $f^{-1}(\mathcal{S})$.
\end{remark}
The following lemma shows that, if a the family $\F$ is regular, then the closed semialgebraic type sets on the family $\F$ are always described by closed inequalities. Notice that without the regularity condition this property does not hold even for the classical semialgebraic sets. 
\begin{lemma}
    Let $S= \bigcup_i \bigcap_{j=1}^s \{f_j *_{ij}0\}$ a semialgebraic type set of a regular family $\F$.
    Then its closure is given by
    \[\overline{S}=\bigcup_i \bigcap_{j=1}^s \{f_j \;\overline{*}_{ij}\;0\}\]
    where $\overline{*}_{ij}$ is $\le,\ge$ or $=$ according as $*_{ij}$ is $<,>$ or $=$.
\end{lemma}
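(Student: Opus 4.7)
The plan is to establish the two inclusions separately; only the reverse one requires the regularity of $\F$.

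The inclusion $\overline{S}\subseteq\bigcup_i\bigcap_j\{f_j\,\overline{*}_{ij}\,0\}$ is a routine continuity argument, independent of transversality. Given $x_n\to x$ with $x_n\in S$, finiteness of the union in the definition of $S$ lets me pass to a subsequence lying in one fixed piece $\bigcap_j\{f_j*_{ij}0\}$; continuity of each $f_j$ then turns every open/equality condition $f_j(x_n)*_{ij}0$ into $f_j(x)\,\overline{*}_{ij}\,0$ in the limit.

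For the reverse inclusion I would fix an index $i$, a point $x\in\bigcap_j\{f_j\,\overline{*}_{ij}\,0\}$, and aim to construct $x_n\to x$ inside $\bigcap_j\{f_j*_{ij}0\}\subseteq S$. I partition $\{1,\dots,s\}$ according to $*_{ij}$ into $J_<,J_>,J_=$ and single out
\[
K\coloneqq J_=\cup\{j\in J_<\cup J_>: f_j(x)=0\},
\]
the set of indices on which $x$ saturates the closed condition. For $j\notin K$ the strict inequality $f_j(x)*_{ij}0$ is open and hence persists in a neighbourhood of~$x$, so it will suffice to produce $x_n\to x$ with $f_j(x_n)=0$ for $j\in J_=$ and $f_j(x_n)$ strictly of the prescribed sign for $j\in K\setminus J_=$.

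This is the step where regularity enters. By \Cref{equivalent description of regularity}, $f\pitchfork\S$, which at $x$ forces $D_xf_K\colon T_xM\to\R^{|K|}$ to be surjective (in particular $|K|\le m$). Decomposing $f_K=(f_{J_=},f_{K\setminus J_=})$, for any target $(0,b)\in\R^{|J_=|}\times\R^{|K\setminus J_=|}$ surjectivity yields $v\in T_xM$ mapping to it; this $v$ lies in $\ker D_xf_{J_=}=T_xf_{J_=}^{-1}(0)$ and satisfies $D_xf_{K\setminus J_=}(v)=b$, so $f_{K\setminus J_=}|_{f_{J_=}^{-1}(0)}$ is a submersion at~$x$. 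Since submersions are open maps, the image of every neighbourhood of $x$ in $f_{J_=}^{-1}(0)$ under this restriction contains a full neighbourhood of $0\in\R^{|K\setminus J_=|}$; choosing there a null sequence $y_n$ whose coordinates have the signs prescribed by $J_<$ and $J_>$, any preimages $x_n$ furnish the required sequence. The main technical point I foresee is precisely this passage from global transversality $f\pitchfork\S$ to the restricted submersion statement — a short linear-algebra splitting of $D_xf_K$ — which is the sole place where regularity of $\F$ is used, and without which a point in the relaxed set could easily be isolated from $S$ itself.
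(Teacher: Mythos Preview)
Your argument is correct and follows essentially the same route as the paper: both reduce to a single basic piece, identify the set of indices where $f_j(x)=0$, invoke regularity of $\F$ to make $f_K$ a submersion at $x$, and then produce nearby points realising the prescribed strict signs while staying on $f_{J_=}^{-1}(0)$. The only cosmetic difference is that the paper uses the rank theorem to straighten $f_J$ into coordinate projections and then picks the signs of the coordinates directly, whereas you phrase the same step via the open--mapping property of the restricted submersion $f_{K\setminus J_=}\big|_{f_{J_=}^{-1}(0)}$.
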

\begin{proof}
    Since closure commutes with unions, it suffices to show that $$\overline{\bigcap_{j=1}^s \{f_j *_j0\}}=\bigcap_{j=1}^s \{f_j \;\overline{*}_j\;0\}.$$
    The set $\bigcap_{j=1}^s \{f_j \;\overline{*}_j\;0\} $ is closed and clearly contains $\bigcap_{j=1}^s \{f_j *_j0\}$, so we have the inclusion:
    \[\overline{\bigcap_{j=1}^s \{f_j *_j0\}}\subseteq\bigcap_{j=1}^s \{f_j \;\overline{*}_j\;0\}.\] To prove the reverse inclusion we need to use the fact that the family $\F$ is regular. Let \begin{equation*} \label{eq:lemma closure}
    x\in  \bigcap_{j=1}^s \{f_j \;\overline{*}_j\;0\},\end{equation*} and denote by $J$ the set of indices $j=1,\dots,m$ such that  $f_j(x)=0$. 
    Consider the map $f_J\colon M \to \R^{|J|}$. Since  the family $\F$ is regular, $0$ is a regular value of $f_J$. Given that $x\in f_J^{-1}(0)$, this preimage is nonempty, and thus we have $|J|\le \dim M=m$. By the  rank theorem (see \cite[Theorem 4.12]{leeIntroductionSmoothManifolds2012}), there exists a coordinate chart $\varphi\colon \R^m \to U\subset M$ with $x=\varphi(0)$, satisfying:
    \[f_J\circ \varphi(y_1,\dots y_m)=(y_1,\dots, y_{|J|}).\]
    Choose $y=(y_1,\dots, y_m)$ such that $\sign y_j=*_j$, then $\varphi(y)\in \bigcap_{j\in J} \{f_j *_j0\}$. Moreover, since the relations $*_j$ for $j\not\in J$ are strict inequalities, choosing $y$  sufficiently close to $0$ we have also that $$\varphi(y)\in \bigcap_{j=1}^s \{f_j *_j0\}.$$
    As $\varphi(y)$ can be made arbitrarily close to  $x$, we conclude that
    $x\in \overline{\bigcap_{j=1}^s \{f_j *_j0\}}$ which completes the proof.
\end{proof}
In the following section we prove a bound for the Betti numbers of a closed semialgebraic--type set.  In the classical setting involving (polynomial) semialgebraic sets, see \cite[Theorem 7.38]{basuAlgorithmsRealAlgebraic2006}, the bound is polynomial in $s$ as well as in the degree of the polynomials. Therefore, also in our smooth setting, we search for a similar bound that is polynomial both in $s$ and in the condition number $k(\F)$ of the family.

\subsection{Cell decomposition of closed semialgebraic type sets}
The argument follows the classical proof for the bound of the Betti numbers of semialgebraic sets described by closed conditions, see \cite[Chapter 7.4]{basuAlgorithmsRealAlgebraic2006}. The strategy for the semialgebraic case is to perform a decomposition of the semialgebraic set into simpler pieces, i.e. basic semialgebraic sets. For basic semialgebraic sets, one can bound the Betti numbers using the classical Oleinik--Petrovsky--Thom--Milnor bound. 
Here we do something similar, we decompose the semialgebraic--type set into simple pieces  whose Betti numbers are controlled by \Cref{theorem A}.

We begin by introducing:
\[W_i\coloneqq \left\{\{f_i=0\},\{f_i =\frac{\delta}{2}\}, \{f_i=-\frac{\delta}{2}\}, \{f_i\ge \delta\}, \{f_i\le-\delta\}\right\},\] and
\[W_{\le i}\coloneqq\bigg\{\omega_i \mathrel{\Big|} \omega_i= \bigcap_{j=1}^i \psi_j,\; \psi_j\in W_j \bigg\}.\]
Notice that for each $\omega_i\in W_{\le i}$ and any $x\in \omega_i$, the sign of $f_j(x)$ is constant for each $j=1,\dots,i$.

\begin{figure}[ht]
    \centering
    %-----------------------------------
    % First subfigure
    %-----------------------------------
    \begin{subfigure}[t]{0.45\textwidth}
      \centering
      \fbox{%
        \includegraphics[width=\linewidth, keepaspectratio]{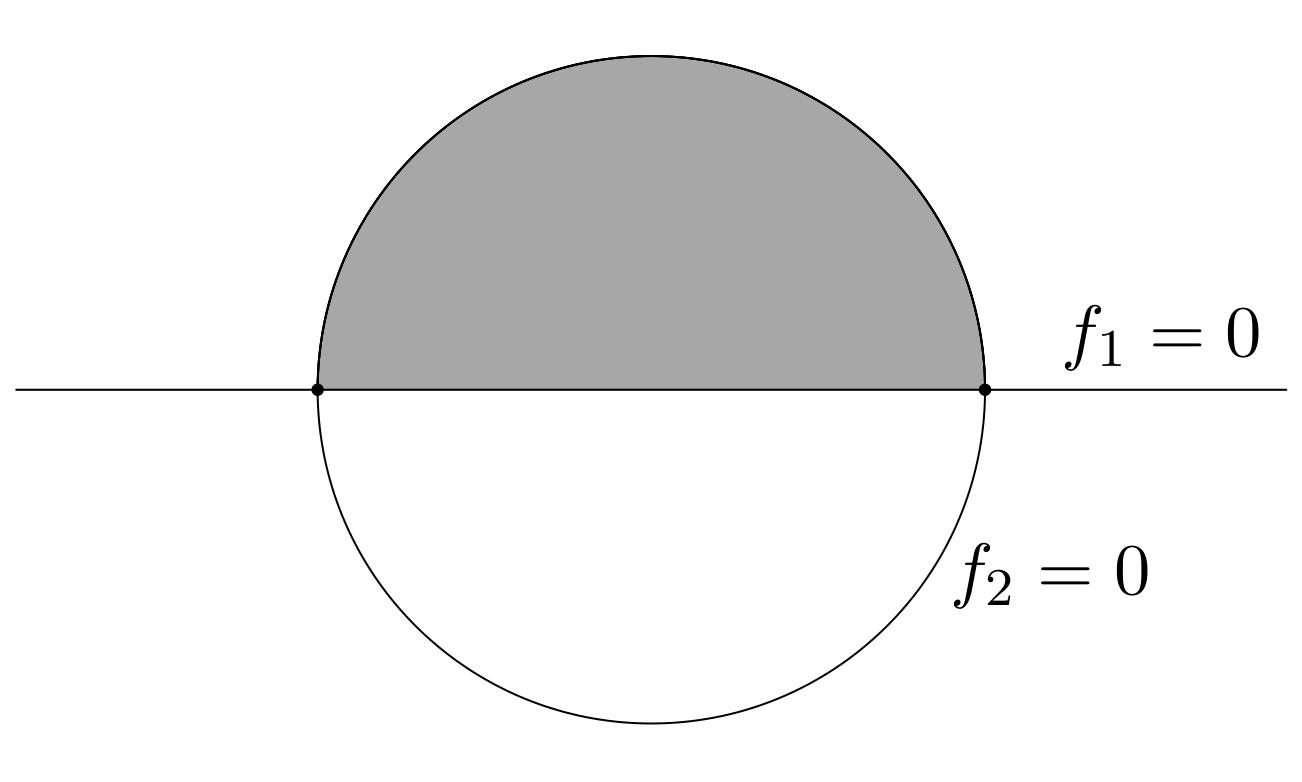}%
      }
      \caption[]{In gray we see a semialgebraic--type set $S$ on $
          f_1\coloneqq x_2,\;\; f_2\coloneqq x_{1}^{2}+x_{2}^{2}-1
      $}
    \end{subfigure}
    \hfill
    %-----------------------------------
    % Second subfigure
    %-----------------------------------
    \begin{subfigure}[t]{0.45\textwidth}
      \centering
      \fbox{%
        \includegraphics[width=\linewidth, keepaspectratio]{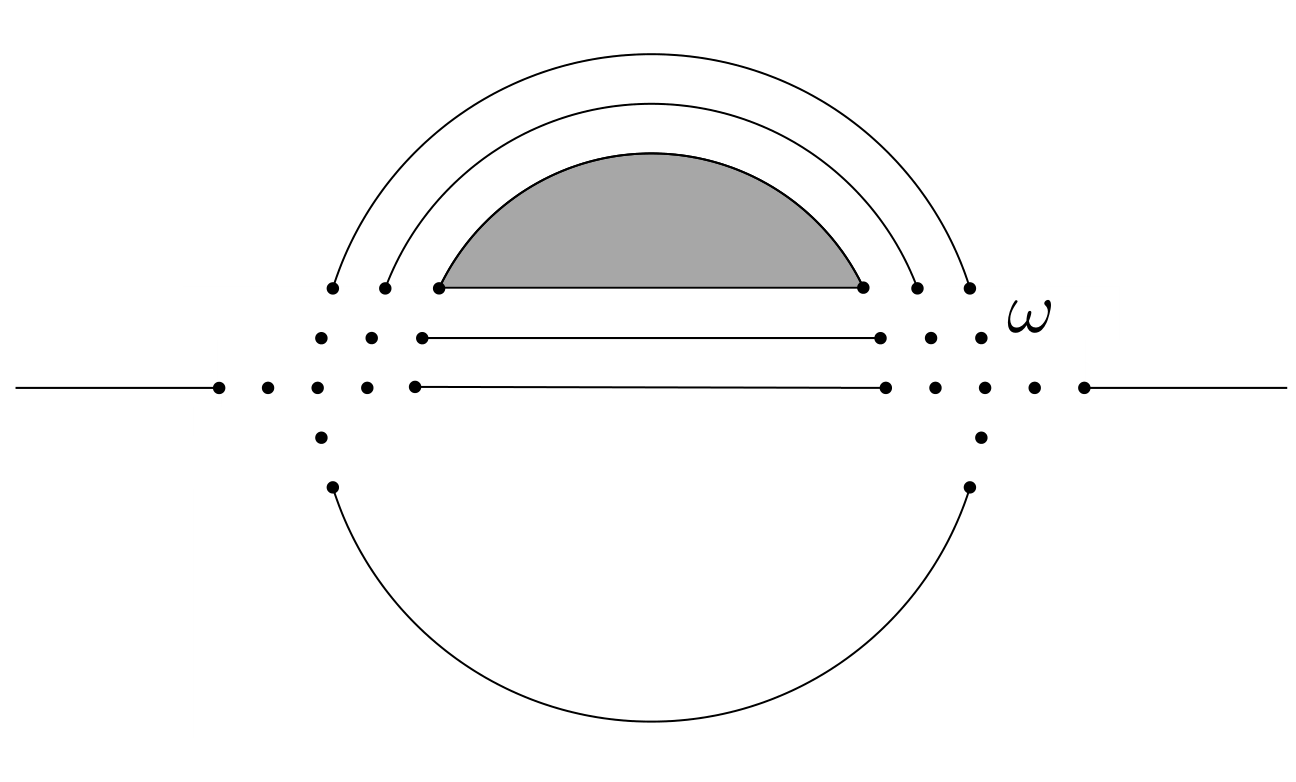}%
      }
      \caption{$\omega\in W_{\le 2}$ such that $\omega\subseteq S$. }
      \label{fig:right}
    \end{subfigure}
    
    \caption{}
    \label{fig:two_same_size}
\end{figure}

The next proposition relates the Betti numbers of $S$ to the Betti numbers of $\omega_s\in W_{\le s}$.
\begin{prop} \label{cells}
    Let $S$ be a closed semialgebraic--type set on a family $\F=\{f_1,\dots,f_s\}$. Suppose that $\delta(\F)>0$, then, for any $0<\delta<\sqrt{m+1}\cdot\delta(\F)$ we have that:
    \[b(S)\le \sum_{\substack{\omega_s\in W_{\le s} \\ \omega_s \subseteq S}}b(\omega_s).\]
\end{prop}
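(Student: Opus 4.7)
The plan is to prove the bound by induction on $s$, using the Mayer--Vietoris inequality of \Cref{Mayer Vietoris} together with the product structure from the Thom isotopy lemma (\Cref{prop: product stratification} and \Cref{rem: product stratification}). The base case $s=0$ is trivial: $W_{\le 0} = \{M\}$, the only closed semialgebraic--type set on the empty family is $S=M$, and both sides of the inequality equal $b(M)$.

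For the inductive step, I would decompose $S$ along the last function $f_s$ into three closed pieces:
\[
A_- := S\cap\{f_s\le-\delta\},\quad A_0 := S\cap\{-\delta\le f_s\le\delta\},\quad A_+ := S\cap\{f_s\ge\delta\}.
\]
The pairwise intersections reduce to $A_-\cap A_0 = S\cap\{f_s=-\delta\}$ and $A_0\cap A_+ = S\cap\{f_s=\delta\}$, while $A_-\cap A_+ = \emptyset$ and the triple intersection is empty. Applying \Cref{Mayer Vietoris} gives
\[
b(S)\;\le\; b(A_-)+b(A_0)+b(A_+)+b(S\cap\{f_s=-\delta\})+b(S\cap\{f_s=\delta\}).
\]

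Each summand will be identified with the contribution of cells having a specific value of $\psi_s$. The sets $A_\pm$ are closed semialgebraic--type sets on the reduced family $\F' := \{f_1,\ldots,f_{s-1}\}$ restricted to the codimension--$0$ submanifold $\{f_s\ge\delta\}$ (respectively $\{f_s\le-\delta\}$), so the inductive hypothesis bounds $b(A_\pm)$ by the sum of $b(\omega_s)$ over cells $\omega_s\subseteq S$ with $\psi_s = \{f_s\ge\delta\}$ (resp.\ $\{f_s\le -\delta\}$). For $A_0$, I use the stratum--preserving homeomorphism of \Cref{rem: product stratification} to identify $f_s^{-1}([-\delta,\delta])\cong [-\delta,\delta]\times f_s^{-1}(0)$; writing $S = f^{-1}(T)$ for a closed union $T\subseteq\R^s$ of orthant strata, the closedness of $T$ guarantees that whenever a sign stratum $C_\sigma$ with $\sigma_s\in\{+,-\}$ is contained in $T$, so is the stratum obtained by replacing $\sigma_s$ with $0$. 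Under the product identification, the projection $(t,x)\mapsto(0,x)$ therefore restricts to a strong deformation retraction of $A_0$ onto $S\cap\{f_s=0\}$, giving $b(A_0) = b(S\cap\{f_s=0\})$; induction on $\F'|_{\{f_s=0\}}$ bounds this by the sum over cells with $\psi_s=\{f_s=0\}$. Finally, for $b(S\cap\{f_s=\pm\delta\})$, Thom isotopy applied to $f_s^{-1}([\delta/2,\delta])$ (resp.\ $f_s^{-1}([-\delta,-\delta/2])$)---which lies entirely in a single sign stratum of $\R$ so that the $T$--structure is constant in the $f_s$--direction---yields a stratum--preserving homeomorphism $S\cap\{f_s=\pm\delta\}\cong S\cap\{f_s=\pm\delta/2\}$, and induction on this slice bounds the right--hand side by the sum over cells with $\psi_s=\{f_s=\pm\delta/2\}$. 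Collecting the five contributions, each cell $\omega_s\subseteq S$ is counted exactly once according to the value of $\psi_s$, yielding the claimed inequality.

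The main technical obstacle is ensuring that the reduced family $\F'$ inherits sufficient regularity on each iteratively constructed submanifold ($\{f_s=0\}$, $\{f_s=\pm\delta/2\}$, $\{f_s\ge\delta\}$, $\{f_s\le-\delta\}$) for the inductive hypothesis to apply with the same $\delta$. Using \Cref{proposition distance discriminant for families} and the transversality of $f$ to the orthant stratification, one can choose $\delta$ small enough at the outset so that the analogue of the hypothesis $\delta < \sqrt{m-k+1}\cdot \delta(\F'|_N)$ holds on every submanifold $N$ of codimension $k\le s$ encountered along the induction; only finitely many such submanifolds appear, each arising as a transverse intersection of level sets of the $f_i$, so each $\delta(\F'|_N)$ is uniformly bounded below by a positive constant depending only on $\F$.
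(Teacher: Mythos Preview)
Your overall toolkit---Mayer--Vietoris together with the product structure of \Cref{prop: product stratification}/\Cref{rem: product stratification} to retract $\{|f_j|\le\delta\}$ onto $\{f_j=0\}$ and to slide $\{f_j=\pm\delta\}$ to $\{f_j=\pm\delta/2\}$---is exactly what the paper uses. The difference is how the induction is organized, and that difference is precisely the source of the obstacle you flag at the end.

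The paper never changes the ambient manifold. It proves, by induction on $i=0,\dots,s$, the invariant
\[
b(S)\le\sum_{\omega_i\in W_{\le i}} b(S\cap\omega_i),
\]
and the inductive step is only: for each fixed $\omega_i$, show $b(S\cap\omega_i)\le\sum_{\psi\in W_{i+1}}b(S\cap\omega_i\cap\psi)$. This is one Mayer--Vietoris splitting of $S\cap\omega_i$ along $\{|f_{i+1}|\gtrless\delta\}$, followed by the two retractions you describe. All of it takes place inside $M$, and the only quantitative hypothesis ever invoked is the original $\delta<\sqrt{m+1}\,\delta(\F)$, via \Cref{rem: product stratification} with $I=\{i+1\}$. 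At $i=s$ one observes that the closed sign conditions defining $S$ are constant on each $\omega_s$, so $S\cap\omega_s\neq\emptyset$ implies $\omega_s\subseteq S$.

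Your version instead feeds each piece into the proposition for $s-1$ functions on a \emph{new} ambient space, and this creates two genuine gaps. First, $A_\pm$ live on $\{\pm f_s\ge\delta\}$, which are manifolds with boundary; neither the statement being proved nor \Cref{prop: product stratification} is available there without a separate extension. Second, on the closed slices $\{f_s=0\}$ and $\{f_s=\pm\delta/2\}$ the inductive hypothesis would require $\delta<\sqrt{m}\cdot\delta(\F'|_N)$, which does not follow from the hypothesis on $\delta(\F)$; shrinking $\delta$ at the outset, as you propose, establishes the proposition only for sufficiently small $\delta$ rather than for every $\delta$ in the stated range (unless you add a further argument that the right-hand side is independent of $\delta$). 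The paper's organization sidesteps both issues by never leaving $M$.
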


\begin{proof}
    We prove by induction that for every $i=1,\dots, s$
    $$b(S)\le \sum_{\omega_i\in W_{\le i} }b(S\cap\omega_i).$$
    If $i=0$ there is nothing to prove.

   Assume that the statement holds for some  $i < s$, namely $$b(S)\le \sum_{\omega_i\in W_{\le i}}b(S\cap \omega_i)$$ 
   We need to show it also holds for $i+1$:
   \[  b(S)\le \sum_{\omega_{i+1}\in W_{\le i+1}}b(S\cap \omega_{i+1}).\]
    By definition, $W_{\le i+1}=\{\omega_i \cap \psi \mid \omega_i\in W_{\le i}, \psi \in W_{i+1}\}$, thus, using the inductive hypothesis, it is sufficient to prove that for each fixed $\omega_i \in W_{\le i}$, $$b(S\cap \omega_i) \le \sum_{\psi\in W_{i+1}} b(S\cap \omega_i \cap \psi)$$
    Recall from the Mayer–Vietoris sequence, see \cref{section mayer vietoris}, that for any closed sets $A,B \subseteq M$, $$b(A\cup B)\le b(A)+b(B)+b(A\cap B).$$ Set $$A\coloneqq S\cap \omega_i \cap \{|f_{i+1}|\ge \delta\}, \quad B\coloneqq S\cap \omega_i \cap \{|f_{i+1}|\le \delta\}.$$ Then $A \cup B = S \cap \omega_i$ and, by the Mayer–Vietoris inequality,  
    \begin{align*}
    b(S\cap \omega_i)\le b(S\cap \omega_i \cap \{|f_{i+1}|\ge \delta\})+b(S\cap \omega_i \cap \{|f_{i+1}|\le \delta\})+b(S\cap \omega_i \cap \{|f_{i+1}|= \delta\}).
    \end{align*}
    Next, we claim the following homotopy equivalences: $$S\cap \omega_i \cap \{|f_{i+1}|\le \delta\} \simeq S\cap \omega_i \cap \{|f_{i+1}|=0\}$$ 
    and 
    \[S\cap \omega_i \cap \{|f_{i+1}|= \delta\} \simeq S\cap \omega_i \cap \{|f_{i+1}|=\delta/2\}.\]
    The sets $S\cap \omega_i \cap \{|f_{i+1}|\le \delta\}$ and $S\cap \omega_i \cap \{|f_{i+1}|= \delta\}$ are union of strata of the stratification $\S'_{\{1,\dots,\widehat{i+1},\dots,s\}}$ of $M$ given by the transversal intersections of the hypersurfaces
    \begin{align*}
    \{f_j=\pm \delta, f_j=\pm \delta/2, f_j=0, & \text{  for } j=1,\dots ,i;\\ f_j=0, & \text{ for } j=i+2,\dots,s \}.
    \end{align*}
    By \cref{prop: product stratification} and  \cref{rem: product stratification} applied to $I=\{i+1\}$, if $\delta_0<2^{-\frac{m}{2}}\delta(\F)$, there exists a stratum preserving homeomorphism $$\phi_{i+1}\colon f_{i+1}^{-1}((-\delta_0,\delta_0))\to(-\delta_0,\delta_0)\times f_{i+1}^{-1}(0)$$
    between the stratifications $f^{-1}(\S_{i+i}\times\S'_{\{1,\dots,\widehat{i+1},\dots,s\}} )$ and $\S_{i+i}\times  f^{-1}(\S'_{\{1,\dots,\widehat{i+1},\dots,s\}} )$.
    % To prove these equivalences, we use a suitable stratification of $M$ that is compatible with the level sets of the functions $f_j$ for $j\le i$ and the zero sets of $f_j$ for $j\ge i$. In particular consider the map \begin{equation*}
    % f_{i+1}:\left((f_{i+1})^{-1}(-\delta',\delta'),(f_{i+1})^{-1}(-\delta',\delta')\cap W_i\right)\to (-\delta',\delta'), \end{equation*} where $\delta<\delta'<\delta(\F)$ and $W_i$ is the stratification of $M$ given by 
    % We claim that the map is a proper submersion restricted to each strata. Let $x\in Z$ where $$Z=\{f_J(x)=\gamma_j\mid j\in J\}$$  is a smooth stratum of $W_i$, for some $J \subseteq {1,\dots,i,i+2,\dots,s}$. Assume by contradiction that $D_xf_{i+1}(v)=0$ for each $v\in T_xZ=\ker D_x f_J$. Then $D_x(f_J,f_{j+i})$ is singular. Since  $\|(f_J,f_{j+1})(x)\|=\|f_{j+1}(x)\|<\delta'$, and $\delta'<\delta(\F)$ we have that  $$\lVert(f_J,f_{i+1})(x)\rVert+\omega_{|J|+1}D_x (f_J,f_{i+1})<\delta(\F),$$
    % which contradict the definition of $\delta(\F)$.
    % By Thom isotopy Lemma (see \Cref{Thom isotopy general})
    % \begin{equation*}
    %     (f_{i+1})^{-1}(-\delta',\delta')\overset{\phi}{\simeq} (f_{i+1})^{-1}(0)\times (-\delta',\delta'),
    % \end{equation*}
    % where the homeomorphism $\phi$ respect the stratification $W_i$.
    Now, consider a continuous non decreasing map $\alpha: (-\delta_0,\delta_0) \to (-\delta_0,\delta_0)$ such that $\alpha([-\delta,\delta])=0$ and which is the identity near $\partial(-\delta_0,\delta_0)$. For any cell $\sigma$ in the stratification $\S_{i+1}$ of $(-\delta,\delta)$, i.e. $\sigma\in\{(-\delta_0,0),\{0\},(0,\delta_0)\}$, we have that $\alpha(\sigma)\subseteq \overline{\sigma}$. Moreover $\alpha|_{\overline{\sigma}}\colon \overline{\sigma} \to \overline{\sigma}$ is an homotopy equivalence.  This implies that also:
    \[\Phi\coloneqq\phi_{i+1}^{-1}\circ (\alpha\times \id_{f_{i+1}^{-1}(0)})\circ \phi_{i+1}\colon f_{i+1}^{-1}((-\delta_0,\delta_0))\to f_{i+1}^{-1}((-\delta_0,\delta_0))\]
    is a homotopy equivalence which extends to the identity outside $f_{i+1}^{-1}((-\delta_0,\delta_0))$ and such that $\Phi(\overline{\sigma})\subseteq \overline{\sigma}$ for each cell in the stratification.
    Therefore we can restrict $\Phi$ to  $S\cap \omega_i \cap \{|f_{i+1}|\le \delta\}$ obtaining again an homotopy equivalence. In particular proves that 
    $$S\cap \omega_i \cap \{|f_{i+1}|\le \delta\} \simeq S\cap \omega_i \cap \{|f_{i+1}|=0\}$$
    By an analogous reasoning we can prove that \[S\cap \omega_i \cap \{f_{i+1}=\pm \delta\} \simeq S\cap \omega_i \cap \{f_{i+1}=\pm\delta/2\}.\]
    This shows that $b(S\cap \omega_i) \le \sum_{\psi\in \Sigma_{i+1}} b(S\cap \omega_i \cap \psi)$ and concludes the inductive step.
    
    Finally, since $S$ is described by a set of closed sign conditions and  each of them is constant on each $\omega_s$, then whenever $S \cap \omega_s \neq \emptyset$ we actually have $ \omega_s \subseteq S$, therefore we obtain
    \[\sum_{\omega_s\in W_{\le s} }b(S\cap\omega_s)=\sum_{\substack{\omega_s\in W_{\le s} \\ \omega_s \subseteq S}}b(\omega_s),\]
    which concludes the proof.
\end{proof}
\subsection{Betti numbers of closed semialgebraic type set}
Before proving \cref{theorem C}, we states the following lemma. It shows that boundaries of semialgebraic-type sets have strictly lower dimension than the sets themselves and allow us to apply \cref{Mayer Vietoris}

\begin{lemma} \label{boundary dimension}
    Let $k(\F)<+\infty$ and let $S$ be a semialgebraic--type set on the family $\F$. Then
    \[\dim (\partial S)< \dim ( S).\]
\end{lemma}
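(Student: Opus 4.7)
Interpreting $\partial S = \overline{S}\setminus S$, my plan is to exploit the transverse structure provided by $k(\F)<\infty$. By \cref{equivalent description of regularity}, the map $f\colon M\to\R^s$ is transverse to the orthant stratification $\mathcal{S}$, and by \cref{remark  closed semialgebraic type sets} we may write $S=f^{-1}(T)$ where $T=\bigcup_{\sigma\in P}T_\sigma$ is a union of sign-pattern strata of $\mathcal{S}$. The transversality formula then gives $\dim f^{-1}(T_\sigma)=m-|\sigma^{-1}(0)|$ whenever this preimage is non-empty, so $\dim S$ is the maximum of these values over $\sigma\in P$ with non-empty preimage.

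The decisive step is to establish $\overline{S}=f^{-1}(\overline{T})$, so that $\partial S=f^{-1}(\overline{T}\setminus T)$. The inclusion $\overline{S}\subseteq f^{-1}(\overline{T})$ is trivial from continuity. For the reverse, I would invoke \cref{prop: product stratification}, which provides a local trivialisation of $f$ as a projection near any $x\in f^{-1}(\overline{T_\sigma})$ with $T_\sigma\subseteq T$: perturbing $f(x)$ inside $T_\sigma$ and lifting through this product structure produces a sequence in $f^{-1}(T_\sigma)\subseteq S$ accumulating at $x$.

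Once $\partial S=f^{-1}(\overline{T}\setminus T)$ is in hand, the conclusion is combinatorial. In the orthant stratification $T_\tau\subseteq\overline{T_\sigma}$ precisely when $\tau$ is obtained from $\sigma$ by turning some nonzero entries into $0$, so $|\tau^{-1}(0)|>|\sigma^{-1}(0)|$ whenever $\tau\ne\sigma$. Hence every stratum of $\overline{T}\setminus T$ is a proper face of some $T_\sigma\subseteq T$ and has strictly lower dimension. The same product structure guarantees that if $f^{-1}(T_\tau)\ne\emptyset$ then $f^{-1}(T_\sigma)\ne\emptyset$ for the corresponding $\sigma\in P$, so the transversality formula applies to both and yields
\[\dim f^{-1}(T_\tau)<\dim f^{-1}(T_\sigma)\le\dim S.\]
Taking the maximum over all such $\tau$ gives $\dim\partial S\le\dim S-1<\dim S$, as required.

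The main obstacle is the equality $\overline{S}=f^{-1}(\overline{T})$: the substantive inclusion is not a consequence of continuity alone and genuinely uses the local product trivialisation of $f$, for which \cref{prop: product stratification} (ultimately the Thom isotopy lemma) is needed. Everything downstream is a bookkeeping exercise on sign patterns combined with the dimension formula for transverse preimages.
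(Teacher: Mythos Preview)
Your argument is correct but takes a genuinely different route from the paper. The paper never establishes the equality $\overline{S}=f^{-1}(\overline{T})$; it uses only the inclusion $\partial S\subseteq f^{-1}(\partial T)$, which is immediate from continuity, and then imports the dimension drop from semialgebraic geometry: since $T\subseteq\R^s$ is semialgebraic, \cite[Theorem~5.42]{basuAlgorithmsRealAlgebraic2006} gives $\codim\partial T>\codim T$, and transversality of $f$ to the orthant stratification translates codimensions between $\R^s$ and $M$. Your approach trades this external black box for an explicit combinatorial analysis of orthant faces, which is more self-contained but invokes the heavier local product trivialisation of \cref{prop: product stratification}. In fact you can have both advantages: the trivialisation is not really needed. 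The inclusion $\partial S\subseteq f^{-1}(\overline{T}\setminus T)$ comes for free from continuity, and for the non-emptiness step you can argue directly that any $x\in\partial S$ is a limit of points $x_n\in S$, so after passing to a subsequence all $x_n$ lie in a single $f^{-1}(T_\sigma)$ with $\sigma\in P$; then $f(x)\in\overline{T_\sigma}\setminus T$ forces $f(x)$ into a proper face $T_\tau$ of $T_\sigma$, and your transversality comparison $\dim f^{-1}(T_\tau)<\dim f^{-1}(T_\sigma)\le\dim S$ goes through unchanged. This yields a proof that is both elementary and avoids the citation.
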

\begin{proof}   
    By \Cref{remark  closed semialgebraic type sets}, we have that $S=f^{-1}(T)$, where $T$ is a semialgebraic set and $f\pitchfork T$. Since $f$ is continuous, then $\partial f^{-1}(T) \subseteq f^{-1}(\partial T)$, therefore
    \[\codim (\partial S)\ge \codim f^{-1}(\partial T).\]
    Since $f\pitchfork T$, we have that $\codim f^{-1}(\partial T)=\codim \partial T$. By Theorem 5.42 in \cite{basuAlgorithmsRealAlgebraic2006}, for a semialgebraic set $T$ we have
    $\codim \partial T>\codim T.$
    In conclusion we obtain
    \begin{align*}
        \codim \partial S&\ge \codim f^{-1}(\partial T) \\
        &=\codim\partial T \\
        &>\codim T \\
        &=\codim S
    \end{align*}
    we the last equality follows again by transversality.
\end{proof}

Before the proof of \cref{theorem C} we recall the statement.
\begingroup
\def\thethm{\ref{theorem C}}
\begin{thm} 
    Let $(M,g)$ be a smooth Riemannian manifold of dimension $m$. There exists a constant $c_4(M,g)>0$ such that for every smooth family $\F$, and every closed semialgebraic--type set $S$ on the family $\F$, we have  
    \[b(S)\le c_4 \big(s\cdot k(\F)\big)^m .\]
\end{thm}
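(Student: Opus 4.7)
The strategy is analogous to Basu--Pollack--Roy's proof for closed polynomial semialgebraic sets, with \Cref{theorem A} playing the role of the Oleinik--Petrovsky--Thom--Milnor bound and \Cref{cells} providing the cell decomposition.

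First, I would apply \Cref{cells} with a suitable $\delta>0$ to obtain
\[b(S)\;\le\;\sum_{\omega \in W_{\le s},\; \omega \subseteq S} b(\omega).\]
Each cell decomposes as $\omega = N_{E,c}\cap B_{I,\sigma}$, where $E \subseteq \{1,\dots,s\}$ indexes the equality constraints $f_j=c_j$ (with $c_j\in\{0,\pm\delta/2\}$) and $I=\{1,\dots,s\}\setminus E$ indexes the closed inequality constraints $\sigma_j f_j\ge\delta$. Combining the product structure from \Cref{prop: product stratification} (which identifies the level set $N_{E,c}$ with the zero set $Z(f_E)$ via a stratum-preserving homeomorphism) with \Cref{proposition distance discriminant for families}, I would argue that cells with $|E|>m$ must be empty, and that for $|E|\le m$,
\[b(N_{E,c}) \;=\; b(Z(f_E))\;\le\;c_1\,k(f_E)^m\;\le\;c_1\,k(\F)^m\]
by \Cref{theorem A}.

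Next, to bound $b(\omega)$ (not merely $b(N_{E,c})$) and to keep the cell count polynomial in $s$, I would iterate the Mayer--Vietoris inequalities of \Cref{Mayer Vietoris}. Each inequality constraint $\sigma_j f_j\ge\delta$ can be traded for its boundary $\{f_j=\sigma_j\delta\}$, thereby promoting an inequality index into the equality set. This reduction would yield an inequality of the form
\[b(S)\;\le\;\sum_{|E'|\le m,\; c'} C(m)\cdot b(N_{E',c'}),\]
where $c'$ ranges over a finite set of shift vectors depending on $\delta$ and $C(m)$ is a combinatorial constant. Since the number of equality patterns $(E',c')$ with $|E'|\le m$ is $\sum_{\ell=0}^m \binom{s}{\ell}\cdot O(1) = O(s^m)$, and each Betti number is bounded by $c_1 k(\F)^m$, the bound $b(S)\le c_4\cdot s^m k(\F)^m$ follows with $c_4=c_4(M,g)$.

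The main obstacle will be the inequality-to-equality reduction: a naive application of \Cref{cells} produces up to $5^s$ cells, which would yield an exponential-in-$s$ bound. The polynomial-in-$s$ conclusion is only achievable by systematically grouping cells via \Cref{Mayer Vietoris} so that the inequality parts $B_{I,\sigma}$ are replaced by a controlled number of equality faces rather than counted individually. Ensuring that the combinatorial factor remains $O(s^m)$ throughout this reduction, and that the final constant $c_4$ depends only on $(M,g)$, will be the most delicate part of the argument.
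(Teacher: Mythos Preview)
Your outline identifies the right starting point (\Cref{cells}) and the right tool (\Cref{Mayer Vietoris}), and you correctly diagnose that a naive cell count is exponential in $s$. However, the mechanism you propose for collapsing the exponential sum to a polynomial one is where the argument breaks down.

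You suggest bounding each individual cell $\omega=N_{E,c}\cap B_{I,\sigma}$ by iterating Mayer--Vietoris to ``trade'' inequality constraints for equality faces, arriving at an estimate $b(S)\le\sum_{|E'|\le m,\,c'}C(m)\,b(N_{E',c'})$. The problem is that this iteration is applied cell by cell, and after \Cref{cells} you are still summing over all $\omega_s\subseteq S$, of which there may be $5^s$. Even if each $b(\omega)$ is controlled by a bounded number of equality sets, summing these bounds over exponentially many cells does not give a polynomial total unless you explain how terms coming from different cells coalesce; your proposal does not supply this grouping.

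The paper avoids this difficulty by a different, more global construction: it sets $\Omega_j=\{f_j\le-\delta\}\cup\{f_j\ge\delta\}\cup\{f_j=\pm\delta/2\}\cup\{f_j=0\}$ and observes that $\Omega\coloneqq\bigcap_{j=1}^s\Omega_j$ is exactly the \emph{disjoint} union of all cells $\omega_s$, so that $\sum_{\omega_s}b(\omega_s)=b(\Omega)$. This single equality is what converts the exponential sum into one Betti number. Only then is \Cref{Mayer Vietoris} (intersection case) applied to the $s$ sets $\Omega_j$, producing at most $\sum_{h\le m}\binom{s}{h}$ unions $\bigcup_{j\in H}\Omega_j$, each of which is bounded via \Cref{theorem A} (through an auxiliary set $X_H$). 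The polynomial-in-$s$ count comes entirely from this intersection structure of $\Omega$, not from any per-cell reduction. Your proposal is missing this construction, and without it the passage from $5^s$ to $O(s^m)$ has no justification.
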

\addtocounter{thm}{-1}
\endgroup
At this point, one could obtain an easy bound for the Betti numbers by computing the Betti number of each cells. Although $b(\omega_s)=O(k(\F)^m)$ and summing over all $\omega_s \in W_{\le s}$ we would obtain  $b(S)=O(k(\F)^m\cdot 5^s)$, which is exponential in $s$.
\begin{proof}
By \Cref{cells}, it remains to prove that
\begin{equation}\label{eq:theorem C}
\sum_{\omega_s\in W_{\le s}}b(\omega_s)\le c_4(s\cdot k(\F))^m.
\end{equation}

To obtain a bound that is polynomial in $s$ we need an additional construction, which can be found in \cite[Chapter 7.4]{basuAlgorithmsRealAlgebraic2006}. In particular define, for $1\le i\le s$, 
\[
    \Omega_j\coloneqq \{f_j(x)\le -\delta\}\cup \{f_j(x)\ge \delta\}\cup  
    \{f_j(x)= \pm \frac{\delta}{2}\}\cup\{f_j(x)=0\} \]
    and  set $$\Omega \coloneqq \bigcap_{j=1}^s \Omega_j.$$ Since $\Omega$ is the disjoint union of all cells $\omega_s\in W_{\le s}$, it follows that
    $$b(\Omega)=\sum_{\omega_s\in W_{\le s}}b(\omega_s).$$ Since by \cref{boundary dimension}
      $b_m(\partial \Omega_j)=0$, we can apply  \Cref{Mayer Vietoris} obtaining :
     \begin{align} \label{eq:Theorem C 2} b_i(\Omega)=b_i\Big(\bigcap_{j=1}^s\Omega_j\Big)\le& \sum_{h=1}^{m-i}\sum_{\substack{H\subseteq \{1,\dots, s\}\\|H|=h}}b_{i+h-1}\Big(\bigcup_{j\in H} \Omega_j \Big)+\binom{s}{m-i}b_m(M)
     \end{align}
    We conclude the proof in the next lemma by showing that if $i+h-1< m$ then $$b_{i+h-1}\Big(\bigcup_{j\in H} \Omega_j \Big)\le (6^h-1)c_1\cdot k(\F)^m+b_{i+h-1}(M).$$
    In this way \eqref{eq:Theorem C 2} becomes:
    \[b_i(\Omega)\le \sum_{h=1}^{m-i}\binom{s}{h}\big((6^h-1)c_1\cdot k(\F)^m+b_{i+h-1}(M)\big)+\binom{s}{m-i}b_m(M),\]
    or more compactly, $b_i(\Omega)=O(s^{m-i}k(\F)^m )$. In particular we have that $b(\Omega)=O(s^{m}k(\F)^m )$, which  concludes the proof of the theorem.
\end{proof}
It remains to show the following lemma.
\begin{lemma} \label{X_h bound}
For each $0\le i\le m$ and each $H\subseteq \{1,\dots, s\}$, such that $|H|=h$, we have:
 \[ b_i(\bigcup_{j\in H} \Omega_j )\le (6^h-1)c_1\cdot k(\F)^m+b_i(M).\]
 \end{lemma}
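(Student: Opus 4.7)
The plan is to pass to the open complement $V := M \setminus \bigcup_{j\in H}\Omega_j$ and use the long exact sequence of the pair $(M,\bigcup_{j\in H}\Omega_j)$ together with excision. Since $\bigcup_{j\in H}\Omega_j$ is a closed Whitney-stratified (hence neighbourhood-retract) subset of the compact manifold $M$, the pair is good, and the cohomology long exact sequence gives
\[b_i\Big(\bigcup_{j\in H}\Omega_j\Big) \le b_i(M) + \dim H^{i+1}\Big(M,\bigcup_{j\in H}\Omega_j\Big),\]
while excision together with one-point compactification identifies $H^{i+1}(M,\bigcup_{j\in H}\Omega_j) \cong H^{i+1}_c(V)$. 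So the task reduces to bounding $\dim H^{i+1}_c(V)$.

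Next I describe the structure of $V$. Each $M \setminus \Omega_j$ is the disjoint union of four open slabs $\{-\delta < f_j < -\delta/2\}$, $\{-\delta/2 < f_j < 0\}$, $\{0 < f_j < \delta/2\}$ and $\{\delta/2 < f_j < \delta\}$. Intersecting over $j \in H$ produces a disjoint decomposition $V = \bigsqcup_{\alpha\colon H\to\{1,2,3,4\}} V_\alpha$ into at most $4^h$ open pieces, where each $V_\alpha = f_H^{-1}(R_\alpha)$ for the corresponding open product box $R_\alpha \subseteq (-\delta,\delta)^H$. Applying \Cref{rem: product stratification} to $I = H$, with the stratification of $(-\delta,\delta)^H$ refined by the hyperplanes $\{x_j = 0, \pm \delta/2\}$ (i.e.\ taking the constants $\lambda_{j,k}$ of that remark to be $0$ and $\pm \delta/2$), I obtain a stratum-preserving homeomorphism $f_H^{-1}((-\delta,\delta)^H) \cong (-\delta,\delta)^H \times f_H^{-1}(0)$ which carries $V_\alpha$ onto $R_\alpha \times f_H^{-1}(0)$.

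Since $R_\alpha$ is homeomorphic to $\R^h$, its compactly supported cohomology is concentrated in degree $h$, and Künneth for compactly supported cohomology over a field yields $\dim H^{i+1}_c(V_\alpha) = b_{i+1-h}(f_H^{-1}(0))$. By \Cref{proposition distance discriminant for families} we have $\delta(f_H) \ge \delta(\F)$ and hence $k(f_H) \le k(\F)$, so \Cref{theorem A} applied to $f_H\colon M \to \R^h$ gives $b(f_H^{-1}(0)) \le c_1 k(\F)^m$. Summing over the $4^h$ indices $\alpha$ and using the elementary inequality $4^h \le 6^h - 1$ (valid for every $h \ge 1$),
\[\dim H^{i+1}_c(V) \;=\; \sum_\alpha \dim H^{i+1}_c(V_\alpha) \;\le\; 4^h\, c_1 k(\F)^m \;\le\; (6^h-1)\, c_1 k(\F)^m,\]
which combined with the first step gives the claimed bound.

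The main technical point to justify is the excision isomorphism $H^{i+1}(M,\bigcup_{j\in H}\Omega_j) \cong H^{i+1}_c(V)$, which requires $\bigcup_{j\in H}\Omega_j$ to be a sufficiently tame closed subset; the Whitney stratification (inherited from the regularity of the family) guarantees a regular neighbourhood and hence the required good-pair property. Apart from this, once the product structure of $V_\alpha$ is in hand, the rest of the argument is essentially a combinatorial count.
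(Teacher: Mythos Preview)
Your argument is correct and in fact yields the sharper constant $4^h$ in place of $6^h-1$, but the route differs substantially from the paper's. The paper introduces the auxiliary hypersurface union $X_H=\bigcup_{j\in H}\{f_j\in\{0,\pm\delta/2,\pm\delta\}\}$, bounds $b_i(X_H)$ by expanding the union via the Mayer--Vietoris inequalities of \Cref{Mayer Vietoris} and applying \Cref{theorem A} to each $\ell$-fold intersection (this is where $\sum_{\ell=1}^h 5^\ell\binom{h}{\ell}=6^h-1$ appears), and then relates $\bigcup_{j\in H}\Omega_j$ to $X_H$ and $M$ via one further Mayer--Vietoris step using $F=(\bigcap_{j\in H}\{|f_j|\le\delta\})\cup X_H$, for which $F\cup\bigcup_j\Omega_j=M$ and $F\cap\bigcup_j\Omega_j=X_H$. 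Everything stays within ordinary Mayer--Vietoris and \Cref{theorem A}.

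Your approach trades this combinatorics for the long exact sequence of the pair and compactly supported cohomology: the complement $V$ breaks into $4^h$ pieces each homeomorphic to $\R^h\times f_H^{-1}(0)$, and K\"unneth computes $H^*_c(V)$ in one stroke from a single application of \Cref{theorem A} to $f_H$. This is conceptually cleaner and gives a better constant, at the cost of invoking slightly heavier tools (good-pair excision for an ANR subset, K\"unneth with compact supports) than the rest of the paper uses. One small correction: in \Cref{rem: product stratification} the constants $\lambda_{j,k}$ refine the stratification on the \emph{complementary} coordinates $\{1,\dots,s\}\setminus I$, not on $I=H$ itself; what you actually need is just the relation $\pi_1\circ\phi_H=f_H$ from \Cref{prop: product stratification}, which directly gives $\phi_H(V_\alpha)=R_\alpha\times f_H^{-1}(0)$ with no stratification argument required. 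Also, the inequality $k(f_H)\le k(\F)$ uses not only $\delta(f_H)\ge\delta(\F)$ but also $\|f_H\|_{C^1}\le\|f\|_{C^1}$, which is immediate from the definitions but worth stating.
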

\begin{proof}
    For each $H\subseteq \{1,\dots, s\}$, such that $|H|=h$, consider $$X_j\coloneqq\{f_j=\pm \delta\}\cup\{f_j=\pm \frac{\delta}{2}\}\cup\{f_j=0\},$$
    and let \[X_H=\bigcup_{j\in H}X_j.\]
    By \Cref{Mayer Vietoris}, the Betti number $b_i(X_H)$ can be bounded above by the sum of Betti numbers of all $\ell$--ary intersections of the $X_j$'s, for $1\le \ell \le h$.
    The number of $\ell$--ary intersections is $\binom{h}{\ell}$. Each intersection is a disjoint union of $5^\ell$ sets. By \Cref{thom isotopy particular}, if $\delta<\delta(\F)$, each of these intersections is homotopy equivalent to  \[\{f_i=0 \mid i\in L\},\]
    for some $L\subseteq H$.
    Since these sets are smooth preimages of $0$, by \cref{theorem A} their Betti numbers are bounded above by $c_1k(\F)^m$. Summing over all possible $\ell$--ary intersections, 
    \begin{equation} \label{eq: X_H}
    b_i(X_H)\le \sum_{\ell=1}^h 5^\ell \binom{h}{\ell}c_1\cdot k(\F)^m=(6^h-1)c_1\cdot k(\F)^m. \end{equation}
    
    Let now $$F=\left(\bigcap_{i=1}^h\{\abs{f_i}\le\delta\}\right)\cup X_h.$$ By the Mayer--Vietoris inequality applied to $\bigcup_{j\in H} \Omega_j$ and $F$,
    \[b_i(\bigcup_{j\in H} \Omega_j)\le b_i(\bigcup_{j\in H} \Omega_j\cap F)+b_i(\bigcup_{j\in H} \Omega_j\cup F)=b_i(X_H)+b_i(M).\]
    By \eqref{eq: X_H} we obtain the desired result:
    \[b_i(\bigcup_{j\in H} \Omega_j)\le (6^h-1)c_1(k(\F))^m+b_i(M).\]
\end{proof}

\newpage
\bibliography{bibliography}
\bibliographystyle{acm}
\end{document}